\def \ve{\varepsilon}
\def \be{\begin{equation}}
\def \ee{\end{equation}}
\theoremstyle{plain}
\newtheorem{thm}{Theorem}
\newtheorem{cor}[thm]{Corollary}
\newtheorem{lem}[thm]{Lemma}
\newtheorem{prop}[thm]{Proposition}
\newtheorem{rem}[thm] {Remark}
\font \twbbb= msbm10 scaled \magstep0                 
\font \tenbbb= msbm7 scaled \magstep0                 
\begin{document}

\footnotetext{Mathematics Subject Classification 2000: }

\numberwithin{equation}{section}
\theoremstyle{plain}
\newtheorem{Th}{Theorem}
\newtheorem{Lm}{Lemma}
\newtheorem*{Df*}{Definition}
\newtheorem{Cr}{Corollary}[section]
\renewcommand{\abstractname}{}

\author{\centerline{\bf A.~Piatnitski} \\  \centerline{Narvik University College, Postboks 385, 8505 Narvik, Norway,}\\
\centerline{P. N. Lebedev Physical Institute of RAS, 53, Leninski pr., Moscow 119991, Russia,}\\
\centerline{e-mail: andrey@sci.lebedev.ru}   \and \centerline{\bf V.~Rybalko} \\
\centerline{Mathematical Division, B.Verkin Institute for Low Temperature Physics} \\
\centerline{and Engineering of the NASU,
47 Lenin ave., Kharkov 61103, Ukraine,} \\
\centerline{e-mail: vrybalko@ilt.kharkov.ua}}

\title{On the first eigenpair of
singularly perturbed operators with oscillating coefficients}

\maketitle

\begin{abstract}
The paper deals with a Dirichlet spectral problem for a
singularly perturbed second order elliptic operator with rapidly oscillating locally periodic coefficients.
We study the limit behaviour of the first eigenpair (ground state) of this problem.
The main tool in deriving the limit (effective) problem is the viscosity solutions technique for Hamilton-Jacobi
equations.
The effective problem need not have a unique solution. We study the non-uniqueness issue in a particular case of zero
potential and construct the higher order term of the ground state asymptotics.

\end{abstract}

\section{Introduction}

Given a singularly perturbed elliptic operator of the form
\begin{equation}
\mathcal{L}_\ve u=\ve^2a^{ij}(x,x/\ve^\alpha)\frac{\partial^2 u} {\partial
x_i\partial x_j} +\ve b^{j}(x,x/\ve^\alpha)\frac{\partial
u}{\partial x_j} +c(x,x/\ve^\alpha) u \label{operator}
\end{equation}
with a small parameter $\ve>0$,
we consider a Dirichlet spectral problem
$$
\mathcal{L}_\ve u=\lambda u, \qquad u=0\ \ \ \hbox{on }\partial \Omega
$$
stated in a smooth bounded domain $\Omega\subset\mathbb R^N$.
We assume that the coefficients $a^{ij}(x,y)$, $b^{j}(x,y)$
and  $c(x,y)$ are 
sufficiently regular functions periodic in $y$ variable, and that
$a^{ij}(x,y)$ satisfy the uniform ellipticity condition.
Finally,  $\alpha>0$ is a fixed positive parameter.
Let us remark that in the underlying convection-diffusion model
$\ve$ represents characteristic ratio between the diffusion and convection coefficients,
while $\ve^\alpha$ refers to the microstructure period.

As well known, the operator $\mathcal{L}_\ve$ has a discrete spectrum, and
the first eigenvalue $\lambda_\ve$ (the eigenvalue with the
maximal real part) is real and simple; the corresponding
eigenfunction $u_\ve$ can be chosen to satisfy $u_\ve>0$ in
$\Omega$. The goal of this work is to study the asymptotic
behavior of $\lambda_\ve$ and $u_\ve$ as $\ve\to 0$.

The first eigenpair (ground state)
of (\ref{operator}) plays a crucial role when studying the large
time behavior of solutions to the corresponding parabolic initial
boundary problem. The first eigenvalue characterizes an
exponential growth or decay of a typical solution, as
$t\to\infty$, while the corresponding eigenfunction describes the
limit profile of a normalized solution.

Also, since in a typical case the first eigenfunction shows a singular behavior,
as $\ve\to0$, in many applications it is
important to know the set of concentration points of $u_\ve$, the
so-called hot spots. This concentration set might consist of one
point, or finite number of points, or a surface of positive
codimension, or it might have more complicated structure. An
interesting discussion on hot spots can be found in \cite{Rau75}.

Boundary value problems for singularly perturbed elliptic operators have
been widely studied in the existing literature. An important contribution to this topic has been
done in the classical work \cite{ViLu} that deals with singular perturbed operators
with smooth non-oscillating coefficients under the assumption that for $\ve=0$ the
problem remains (in a certain sense) well-posed.

The Dirichlet problem for a convection-diffusion operator with a small diffusion and with a convection directed outward
at the domain boundary was studied for the first time in \cite{WF}. The
approach developed in that work relies on large deviation results for trajectories of a diffusion process being a
solution of the corresponding stochastic differential
equation.

The probabilistic interpretation of solutions and the aforementioned large deviation principle have also been used in
\cite{Kif1_80}, \cite{Kif2_80}, \cite{Ki87}, where the first
eigenvalue is studied  for a second order elliptic operator being
a singular perturbation of a first order operator.

There are two natural approaches that can be used for studying the logarithmic asymptotics
of the principal eigenfunction  of a second order singularly perturbed operator.
One of them relies on the above mentioned large deviation results for diffusion processes with
a small diffusion coefficients.
This method was used in \cite{Pi} for studying operators with smooth coefficients on a compact
Riemannian manifolds.


We follow yet another (deterministic) approach
based on the viscosity solution techniques for nonlinear PDEs.
In the context of linear singularly perturbed equations, these techniques
were originally developed
in \cite{EI} and followed by  \cite{BP}, \cite{IK},
\cite{Pe}, \cite{CC}, \cite{CCS} and other works
(see also a review in \cite{Bar}).
Since $u_\ve>0$ in $\Omega$, we
can represent $u_\ve$ as $u_\ve(x)=e^{-W_\ve(x)/\ve}$ to find that
$W_\ve$ satisfies
\begin{equation}
\label{eqforW}
-\ve a^{ij}(x,x/\ve^\alpha)\frac{\partial^2 W_\ve}{\partial x_i\partial x_j}+
H(\nabla W_\ve, x,x/\ve^\alpha)=\lambda_\ve
\end{equation}
with $H(p,x,y)=a^{ij}(x,y)p_i p_j-b^j(x,y)p_j+c(x,y)$, and the
Dirichlet boundary condition for $u_\ve$ yields $W_\ve=+\infty$ on
$\partial\Omega$. 
Using perturbed test
functions we pass to the limit in (\ref{eqforW}) and get the limit
Hamilton-Jacobi equation of the form
\begin{equation}
\overline H(\nabla W(x),x)=\lambda \qquad \text{in}\ \Omega. \label{homprob}
\end{equation}
with an effective Hamiltonian $\overline H(p,x)$ whose definition depends on
whether $\alpha>1$, $\alpha=1$ or $0<\alpha<1$. We show that in
the limit $\ve\to 0$ the boundary condition $W_\ve=+\infty$ on
$\partial\Omega$ yields
\begin{equation}
\overline H(\nabla W(x),x) \geq \lambda\qquad \text{on}\ \partial\Omega.
\label{hombcond}
\end{equation}
 The latter  condition
is known \cite{So1}, \cite{CL} 
as the state constraint boundary condition. Both equation
(\ref{homprob}) and boundary condition (\ref{hombcond}) are
understood in viscosity sense.

Equations of type  (\ref{eqforW}) have been extensively studied in the existing literature.
One can find a short review of state of the art in \cite{LS}, \cite{Kos}  and in more recent works \cite{CS}, \cite{AS},
see also references therein. 

Earlier, singularly perturbed KPP-type reaction-diffusion equations were studied in \cite{MaSou} where, in particular, equations with rapidly oscillating coefficients were considered.
It was shown that the classical Huygens principle might fail
to work in this case.

In the present work,  deriving the effective problem
(\ref{homprob})-(\ref{hombcond}) relies on
the idea of perturbed test functions originally proposed in \cite{E1}.
We strongly believe that with the help of the techniques developed recently in
\cite{LS}, \cite{KRV}, \cite{LS1}, \cite{AS}  this result
can be extended to a more general almost periodic setting as well as
random stationary ergodic setting. In other words, the periodicity assumption can be replaced
with the assumption that the coefficients
in (\ref{operator}) are almost periodic or random statistically homogeneous and
ergodic with respect to the fast variable, at least in the case $\alpha=1$.
The case $\alpha\not=1$ looks more difficult and might require some extra assumptions.
We refer to \cite{S}, \cite{RT},  \cite{CSW}, \cite{LS}, \cite{KRV}, \cite{CS}, \cite{AS} for
(far not complete list of)
various results on almost periodic and random
homogenization of nonlinear PDEs. However, the essential novelty of
this work comes in the (logically) second part of the paper devoted
to the improved ground state asymptotics and resolving the non-uniqueness issue
for (\ref{homprob})-(\ref{hombcond}). The generalization of
this part to non-periodic settings is an open problem.

Problem (\ref{homprob})--(\ref{hombcond}) is known as ergodic or
additive eigenvalue problem. Its solvability was first proved in \cite{LPV} in periodic setting,
more recent results are contained, e.g., in \cite{IM} as well as in
\cite{DS}, \cite{AS1} where stationary ergodic Hamiltonians were considered.
There exists the unique additive
eigenvalue $\lambda$ of (\ref{homprob})--(\ref{hombcond}) while the
eigenfunction $W$ need not be
unique even up to an additive constant. This non-uniqueness
issue is intimately related to the structure of the so-called
Aubry set of effective Hamiltonian which play the role of a hidden
boundary for (\ref{homprob})--(\ref{hombcond}). Loosely speaking
the non-uniqueness in (\ref{homprob})--(\ref{hombcond}) appears
when the Aubry set is disconnected. By contrast, for every $\ve>0$
the eigenfunction $u_\ve$ is unique up to a normalization, and it
is natural to find true limit of $W_\ve=-\ve \log u_\ve$ among
solutions of (\ref{homprob})--(\ref{hombcond}).
This challenging problem is addressed in a particular case of
(\ref{operator}) with $c(x,y) = 0$, $\alpha\geq1$. Following
\cite{Pi83} we introduce the effective drift (convection) and
 assume that it has a
finite number of hyperbolic fixed points in $\Omega$, and that the
Aubry set of the effective Hamiltonian coincides with this finite
collection of points. It follows from our results that in this
case $\lambda_\ve$ tends to zero as $\ve\to 0$. We show that
$\lambda_\ve/\ve$ has a finite limit that can be determined in
terms of eigenvalues of Ornstein-Uhlenbeck operators in $\mathbb
{R}^N$ obtained via local analysis of (\ref{operator}) at the
scale $\sqrt{\ve}$ in the vicinity of aforementioned fixed points.
This, in turn, enables fine selection of the additive
eigenfunction corresponding to $\lim_{\ve\to 0} W_\ve$.

\section{Main results}

We begin with standing hypotheses which are assumed to hold
throughout this paper. We assume that $\Omega$ is connected and
has $C^2$ boundary $\partial\Omega$; the coefficients
$a^{ij}(x,y),\, b^{j}(x,y),\, c(x,y) \in C^1(\overline{\Omega}\times\mathbb{R}^N)$ are
$Y$-periodic in $y$ functions, where $Y=(0,1)^N$. The matrix
$(a^{ij})_{i,j=\overline{1,N}}$ is uniformly positive
definite:
\begin{equation}
\label{ellipt}
a^{ij}(x,y)\zeta_i\zeta_j\geq m |\zeta|^2>0 \qquad
\forall \zeta\not=0,
\end{equation}
and, without loss of generality, we can assume the symmetry
$a^{ij}=a^{ji}$.

The first eigenfunction $u_\ve$ of the operator (\ref{operator})
can be normalized to satisfy
\begin{equation}
1=\max_{\Omega} u_\ve\ \  (u_\ve>0\ \text{in}\ \Omega),
\label{normalization}
\end{equation}
 then its scaled logarithmic transformation
\begin{equation*}
W_\ve:=-\ve\log u_\ve \label{logtransf}
\end{equation*}
is a nonnegative function vanishing at the points of maxima of
$u_\ve$.


The asymptotic behavior of $\lambda_\ve$ and $W_\ve$ is described
in

\begin{thm}
\label{mainth} The eigenvalues $\lambda_\ve$ converge as $\ve\to
0$ to the limit $\lambda$, which is the unique real number for which
problem
(\ref{homprob}),
(\ref{hombcond})
has a continuous viscosity solution. The functions $W_\ve$ converge
(up to extracting a subsequence) to a limit  $W$ uniformly on
compacts in $\Omega$, and every limit function $W$ is a viscosity
solution of (\ref{homprob}), (\ref{hombcond}).


The effective Hamiltonian $\overline H(p,x)$ in (\ref{homprob}) is
given by the following formulas, depending on the parameter
$\alpha$.

\medskip\noindent
(i)
If $\alpha>1$ then 
\begin{equation}
\label{effH1} \overline H(p,x)=\int_Y H(p,x,y)\vartheta(y)\,{\rm d} y
\end{equation}
where
$$
H(p,x,y)=a^{ij}(x,y)p_ip_j-b^j(x,y)p_j+c(x,y),
$$
and $\vartheta(y)$ is the unique $Y-$periodic solution of the
equation $\frac{\partial^2}{\partial y_i\partial
y_j}(a^{ij}(x,y)\vartheta)=0$ normalized by
$
\int_{Y}\vartheta(y)\, {\rm d}y=1.
$

\medskip\noindent
(ii)
If $\alpha=1$ then $\overline H(p,x)$ is the first
eigenvalue (eigenvalue with the maximal real part) of the problem
\begin{equation}
\begin{array}{l}
\displaystyle a^{ij}(x,y)\frac{\partial^2 \vartheta}{\partial y_i\partial y_j}
+(b^{j}(x,y)-2a^{ij}(x,y) p_i)\frac{\partial\vartheta}{\partial
y_j}+ H(p,x,y)\vartheta=\overline
H(p,x)\vartheta, \\
\vartheta(y)\quad \text{is}\
Y\text{-periodic}.
\end{array}
\label{effproblresonans}
\end{equation}
 According to the Krein-Rutman theorem
$\overline H(p,x)$ is real.

\medskip\noindent
(iii)
If $0<\alpha<1$ then
$\overline H(p,x)$ is the unique number such that the problem
\begin{equation}
\label{effH2}
H(p+\nabla \vartheta(y),x,y) =\overline H(p,x)
\end{equation}
has a $Y-$periodic viscosity solution $\vartheta(y)$;
here $p\in \mathbb R^N$ and $x\in\overline\Omega$ are parameters.
\end{thm}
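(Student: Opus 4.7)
The plan is to follow the perturbed test function method of Evans in each of the three scaling regimes, proceeding in four stages: a priori bounds and compactness; construction of correctors and identification of $\overline H$; passage to the limit by perturbed test functions in $\Omega$; and treatment of the state-constraint boundary condition together with uniqueness of $\lambda$.

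First I would establish uniform estimates. A barrier argument on (\ref{operator}), using the periodic invariant measure of the frozen cell to build super- and subsolutions, gives $|\lambda_\ve|\leq C$. For $W_\ve=-\ve\log u_\ve$, a Bernstein-type argument applied directly to (\ref{eqforW}), using uniform ellipticity (\ref{ellipt}) and $C^1$-regularity of the coefficients, yields a uniform local Lipschitz bound on $W_\ve$ in $\Omega$. Since $W_\ve\geq 0$ and $W_\ve$ vanishes at a maximum point of $u_\ve$ by (\ref{normalization}), Arzela--Ascoli produces $\lambda_\ve\to\lambda$ and $W_\ve\to W$ locally uniformly along a subsequence, with $\min_\Omega W=0$.

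Next, for each regime I would identify the corrector and $\overline H$. If $\alpha>1$, diffusion dominates the fast scale and the ansatz $W_\ve=W+\ve^{2\alpha-1}\vartheta(x/\ve^\alpha)+\ldots$ reduces the $O(\ve^{1-\alpha})$ term to the linear adjoint equation whose periodic solution is the invariant density $\vartheta$, giving formula (\ref{effH1}). If $\alpha=1$, the multiplicative ansatz $u_\ve(x)\sim\vartheta(x,x/\ve)e^{-W(x)/\ve}$ produces at leading order exactly the resonant periodic eigenvalue problem (\ref{effproblresonans}), and $\overline H(p,x)$ is its first eigenvalue, supplied by Krein--Rutman. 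If $0<\alpha<1$, diffusion is subcritical at the fast scale and $\vartheta$ must be a periodic viscosity solution of the cell Hamilton--Jacobi problem (\ref{effH2}); existence and uniqueness of $\overline H(p,x)$ is then the standard Lions--Papanicolaou--Varadhan ergodic result, applied with $(p,x)$ frozen as parameters.

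To pass to the limit, take a smooth $\varphi$ with $W-\varphi$ attaining a strict local maximum at $x_0\in\Omega$, set $p=\nabla\varphi(x_0)$, solve the cell problem at $(p,x_0)$ to obtain $\vartheta$, and form the perturbed test function $\varphi_\ve(x)=\varphi(x)+\ve^\beta\vartheta(x,x/\ve^\alpha)$, with $\beta$ chosen according to the regime (for $\alpha=1$ one works directly with the multiplicative form). A local max $x_\ve$ of $W_\ve-\varphi_\ve$ converges to $x_0$; inserting $\varphi_\ve$ into (\ref{eqforW}) and using the cell equation to cancel the leading singular terms yields, in the limit, $\overline H(\nabla\varphi(x_0),x_0)\leq\lambda$. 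The supersolution inequality is symmetric. For the boundary condition, $u_\ve=0$ on $\partial\Omega$ forces $W_\ve\to+\infty$ on $\partial\Omega$, which precludes any smooth $\varphi$ from touching $W$ from above at a boundary point, while the same perturbed test function analysis applied to a lower test at $x_0\in\partial\Omega$ gives the one-sided inequality (\ref{hombcond}). Uniqueness of $\lambda$ then follows from the standard comparison principle for state-constraint viscosity problems.

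The main obstacle is the regime $0<\alpha<1$: the corrector $\vartheta$ solving (\ref{effH2}) is only Lipschitz and generally not $C^2$, so the classical perturbed test function computation with pointwise second derivatives is not justified. I would circumvent this by approximating $\vartheta$ with smooth $\delta$-correctors of the discounted cell problem and controlling the error as $\delta\to 0$, or equivalently by doubling variables and exploiting the viscosity inequalities for $\vartheta$ itself. A secondary technical point, common to all three regimes, is the dependence of the coefficients on the slow variable $x$: the cell problem must be frozen at $x_0$ and the resulting $x$-perturbation absorbed, which is routine given the $C^1$ assumption on the coefficients but requires care with the modulus of continuity of $\vartheta$ in the $x$ argument.
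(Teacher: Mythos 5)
Your overall strategy coincides with the paper's: a priori bounds, convergence along a subsequence, perturbed test functions built from cell correctors, and separate treatment of the three scaling regimes. There are, however, three places where your proposal diverges from what actually works, and two of them are genuine gaps.

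First, the state-constraint boundary condition. You argue that $u_\ve=0$ on $\partial\Omega$ ``forces $W_\ve\to+\infty$ on $\partial\Omega$, which precludes any smooth $\varphi$ from touching $W$ from above at a boundary point.'' This is misleading: the limit $W$ is Lipschitz and finite on $\overline\Omega$, and the boundary condition (\ref{hombcond}) is the \emph{supersolution} inequality on $\overline\Omega$, which is about touching $W$ from \emph{below} at a boundary point $x_0$. To run the perturbed test function argument at such an $x_0$ you must know that the minima $x_\ve$ of $W_\ve-\phi_\ve$ lie at distance at least $\kappa\ve$ from $\partial\Omega$; otherwise you cannot apply the PDE at $x_\ve$. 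The paper proves exactly this (Lemma~\ref{boundarymax}) by a quantitative barrier of the form $\rho_\ve(x)=2d(x)-\beta d^2(x)/\ve$, used in the gauge $v_\ve=u_\ve e^{(\phi-\rho_\ve)/\ve}$. Without this lemma, your passage from the boundary touching point to the viscosity inequality (\ref{hombcond}) has a hole.

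Second, the a priori Lipschitz estimate. You propose to run a Bernstein argument directly on (\ref{eqforW}). This works for the periodic problem (indeed, the paper uses it there), but for the Dirichlet problem it is obstructed precisely because $W_\ve$ blows up at $\partial\Omega$, so a global maximum of $|\nabla W_\ve|^2$ cannot be located in $\Omega$. The paper instead rescales the linear equation for $u_\ve$ at scale $\ve$ and invokes the Harnack inequality to get $|W_\ve(x)-W_\ve(z)|\leq C_\kappa(|x-z|+\ve)$ on $\{d(x)\geq\kappa\ve\}$; this gives both the Lipschitz bound and, combined with Corollary~\ref{boundedness}, the uniform bound needed for Arzel\`a--Ascoli.

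Third, the case $0<\alpha<1$. You correctly identify the regularity obstruction (the corrector of (\ref{effH2}) is merely Lipschitz) and propose a discounted-problem regularization. The paper takes a cleaner route: it does not use the Lipschitz corrector at all, but rather the \emph{smooth} eigenfunction $\vartheta_\ve$ of the $\ve^{1-\alpha}$-viscous cell problem (\ref{test3}), sets $\theta_\ve=-\ve^{1-\alpha}\log\vartheta_\ve$, and exploits the periodic Bernstein estimate $\|D^2_y\vartheta_\ve\|_{L^\infty}\leq C/\ve^{1-\alpha}$ from Lemma~\ref{estimatesWper}, together with $\overline H_\ve\to\overline H$. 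This makes the residual viscous term $\ve^{1-\alpha} a^{ij}\partial^2_y\theta_\ve$ uniformly bounded, so the perturbed test function computation goes through without any regularization of a nonsmooth corrector. Your discounted-corrector approach could be made to work, but you would still need a quantitative rate for the discounted approximation and a uniform bound on its second derivatives, which is exactly what the viscous cell problem delivers for free.

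A minor point: your test functions should include a strictly concave/convex perturbation (the paper uses $\phi_\ve=\tilde\phi_\ve-|x-x_0|^2$) so that the touching point is unique and the minimum (or maximum) points $x_\ve$ of $W_\ve-\phi_\ve$ actually converge to $x_0$; without it the localization step does not go through.
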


We note that the effective Hamiltonian $\overline H(p,x)$ is
continuous on $\mathbb{R}^N\times\overline{\Omega}$, convex in $p$
and coercive, moreover $\overline H(p,x)\geq m_1 |p|^2- C$,
$m_1>0$. The viscosity solutions theory for such Hamiltonians is
well established. Following \cite{IM} (see also \cite{M}) we
present various representation formulas for the solutions of
problem (\ref{homprob})-(\ref{hombcond}).

Let us rewrite problem (\ref{homprob})-(\ref{hombcond}) in the
form
\begin{equation}
\overline H(\nabla W(x),x)\leq \lambda \qquad \text{in}\ \Omega
\label{homprob1}
\end{equation}
\begin{equation}
\overline H(\nabla W(x),x)\geq \lambda\qquad \text{in}\ \overline{\Omega},
\label{hombcond1}
\end{equation}
i.e. (\ref{homprob1}) requires that $W$ is a viscosity subsolution
in $\Omega$ while (\ref{hombcond1}) means that $W$ is a viscosity
supersolution in $\overline{\Omega}$. Then the number
$\lambda_{\overline H}$ (additive eigenvalue) for which
(\ref{homprob})-(\ref{hombcond}) has a solution is given by
\begin{equation}
\label{additiveeigenvalue} \lambda_{\overline H}=\inf\{\lambda;\,
\text{(\ref{homprob1}) has a solution}\ W\in
C(\overline{\Omega})\}.
\end{equation}
It can also be expressed in terms of action minimization,
$$
\lambda_{\overline H}=-\lim_{t\to \infty}\frac{1}{t}\inf \int_0^t
\overline L(\dot\eta,\eta)\,{\rm d}\tau,
$$
where the infimum is taken over absolutely continuous  curves
$\eta:[0,t]\to \overline{\Omega}$, and $\overline L(v,x)$ is the
Legendre transform of $\overline H(p,x)$,
$$
\overline L(v,x)=\max\{v\cdot p-\overline H(p,x)\}.
$$

Let us define now the distance function
\begin{equation}
\label{distancefunction} d_{\overline H-\lambda_{\overline
H}}(x,y)=\sup\{W(x)-W(y);\, W\in C(\overline{\Omega})\ \text{is a
solution of (\ref{homprob1}) for}\ \lambda=\lambda_{\overline H}\}.
\end{equation}
It is known (see, e.g., \cite{IM})
that $d_{\overline
H-\lambda_{\overline H}}(x,x)=0$, $d_{\overline
H-\lambda_{\overline H}}(x,y)$ is Lipschitz continuous,
$d_{\overline H-\lambda_{\overline H}}(x,y)\leq d_{\overline
H-\lambda_{\overline H}}(x,z)+ d_{\overline H-\lambda_{\overline
H}}(z,y)$. Besides, for every $y\in\overline{\Omega}$ the function
$d_{\overline H-\lambda_0}(x,y)$ is a solution of (\ref{homprob1})
for $\lambda=\lambda_{\overline H}$ and $\overline H(\nabla_x
d_{\overline H-\lambda_{\overline H}}(x,y),
x)\geq\lambda_{\overline H}$ in $\overline{\Omega}
\setminus\{y\}$. The number  $\lambda_{\overline H}$ is such that
the {\it Aubry} set $\mathcal{A}_{\overline H-\lambda_{\overline H}}$,
\begin{equation}
\label{Aubry} \mathcal{A}_{\overline H-\lambda_{\overline
H}}=\{y\in\overline{\Omega};\, d_{\overline H-\lambda_{\overline H}}(x,y)\
\text{is a solution of (\ref{hombcond1}) for} \
\lambda=\lambda_{\overline H}\},
\end{equation}
is nonempty. Note also that the distance function $d_{\overline
H-\lambda_{\overline H}}(x,y)$ admits the representation
\begin{equation}
\label{distancefunction1} d_{\overline H-\lambda_{\overline
H}}(x,y)=\inf\Bigl\{\int_0^t (\overline
L(\dot\eta,\eta)+\lambda_{\overline H}){\rm d}\tau,\,
\eta(0)=y,\eta(t)=x, t>0\Bigr\},
\end{equation}
and the Aubry set can be characterized by
\begin{equation}
\label{AubryVariational}
 y\in \mathcal{A}_{\overline H-\lambda_{\overline
H}}\Longleftrightarrow \sup_{\delta>0} \inf\Bigl\{\int_0^t (\overline
L(\dot\eta,\eta)+\lambda_{\overline H})\,{\rm d}\tau,\,
\eta(0)=\eta(t)=y,t>\delta\Bigr\}=0.
\end{equation}
The infimum in (\ref{distancefunction1}) and
(\ref{AubryVariational}) is taken over absolutely continuous
curves $\eta:[0,t]\to \overline{\Omega}$.

According to the definition of $d_{\overline H-\lambda_{\overline
H}}(x,y)$ every solution $W$ of (\ref{homprob})-(\ref{hombcond})
satisfies $W(x)-W(y)\leq d_{\overline H-\lambda_{\overline
H}}(x,y)$; this inequality holds, in particular,
 for all $x,y\in\mathcal{A}_{\overline
H-\lambda_{\overline H}}$. Conversely, given a function $g(x)$ on
$\mathcal{A}_{\overline H-\lambda_{\overline H}}$ which satisfies the
compatibility condition $g(x)-g(y)\leq d_{\overline H-\lambda_{\overline
H}}(x,y)$ $\forall x,y\in \mathcal{A}_{\overline H-\lambda_{\overline H}}$
then
\begin{equation}
\label{represnt_forH} W(x)=\min\{d_{\overline H-\lambda_{\overline
H}}(x,y)+g(y);\, y\in\mathcal{A}_{\overline H-\lambda_{\overline H}}\}
\end{equation}
is the unique solution of (\ref{homprob})-(\ref{hombcond}) for
$\lambda=\lambda_{\overline H}$ such that $W(x)=g(x)$ on
$\mathcal{A}_{\overline H-\lambda_{\overline H}}$. In Appendix~\ref{A}
we
show the following simple uniqueness criterion for problem
(\ref{homprob})-(\ref{hombcond}): a solution $W$ (for
$\lambda=\lambda_{\overline H}$) is unique up to an additive
constant if and only if $S_{\overline H-\lambda_{\overline
H}}(x,y)=0$ $\forall x,y\in\mathcal{A}_{\overline
H-\lambda_{\overline H}}$, where $S_{\overline
H-\lambda_{\overline H}}(x,y)$ denotes the symmetrized distance,
$S_{\overline H-\lambda_{\overline H}}(x,y)=d_{\overline
H-\lambda_{\overline H}}(x,y)+d_{\overline H-\lambda_{\overline
H}}(y,x)$.

The interesting issue of non-uniqueness in the limit (homogenized)
problem can be resolved by studying next terms in the
asymptotic expansion of $\lambda_\ve$. This question is rather
complicated, and we mainly focus in this work on a particular case when
$c(x,y)=0$ and $\alpha =1$, so that operator (\ref{operator}) takes the form
\begin{equation}
\label{operator-redu}
\mathcal{L}_\ve u=\ve^2a^{ij}(x,x/\ve)\frac{\partial^2 u} {\partial
x_i\partial x_j} +\ve b^{j}(x,x/\ve)\frac{\partial
u}{\partial x_j}.
\end{equation}
Moreover we assume that
$\lambda_{\overline H}=0$ and that the corresponding Aubry set
$\mathcal{A}_{\overline H}$ has a special structure. The
analogous result for $\alpha>1$ is established in Section \ref{Sec8}.

For $\alpha\geq 1$ the effective Hamiltonian $\overline
H(p,x)$ is strictly convex in $p$,
 i.e.
$\left(\frac{\partial^2}{\partial p_i \partial p_j}\overline
H(p,x)\right)_{i,j=\overline{1,N}}$ is positive definite for all
$p\in\mathbb R^N$ and $x\in\overline\Omega$, see \cite{C},
or \cite{DP} for $\alpha=1$, while for $\alpha>1$ the
Hamiltonian $\overline H(p,x)$ is a quadratic function in $p$. Note also
that  if $c(x,y)=0$ then $\overline H(0,x)=0$.
Therefore,
the Lagrangian $\overline L(v,x)$ is strictly convex
and $\overline
L(v,x)=\max \{p\cdot v-\overline H(p,x)\}\geq -\overline
H(0,x)=0$, in the case $c(x,y)=0$, $\alpha=1$ we are interested in.  Thus we have
$$
\overline L(v,x)\geq 0, \quad \text{and}\  \overline L(v,x)=0\,
\Longleftrightarrow\, v_j=\frac {\partial \overline H}{\partial
p_j}(0,x).
$$
On the other hand direct calculations show that
\begin{equation}
\label{effectivedrift}
 -\frac {\partial \overline H}{\partial
p_j}(0,x)=\overline b^j(x):=\int_Y b^j(x,y)\theta^\ast(x,y){\rm d}y,
\end{equation}
the functions $\overline b^j(x)$ being components of the so-called
effective drift $\bar b(x)$  defined by the right hand side of
(\ref{effectivedrift}) via the $Y$-periodic solution $\theta^\ast$
of
\begin{equation}
\frac{\partial^2}{\partial y_i\partial y_j}
\Bigl(a^{ij}(x,y)\theta^\ast\Bigr) -\frac{\partial}{\partial
y_j}\Bigl(b^{j}(x,y)\theta^\ast\Bigr)=0
\label{effdualproblresonans}
\end{equation}
normalized by $\int_Y \theta^\ast{\rm d}y=1$. (Note that
$\theta^\ast>0$ and it is a $C^2$ function.) Thus the Lagrangian
$\overline L(v,x)$ can be represented in the form
$$
\overline L(v,x)=\kappa \sum(v_j+\overline b^j(x))^2+\tilde
L(v,x),\quad \text{where} \ 0\leq \tilde L(v,x)\leq \tilde\kappa\sum
(v_j+\overline b^j(x))^2,\ 0<\kappa<\tilde \kappa.
$$
This implies, in view of (\ref{AubryVariational}), that the Aubry
set $\mathcal{A}_{\overline H}$ of the Hamiltonian $\overline H$
coincides with that of the Hamiltonian $\sum p_j^2-\overline
b^j(x) p_j$ whose corresponding Lagrangian is $\frac{1}{4}\sum
(v_j+\overline b^j(x))^2$. In particular, the additive eigenvalue
$\lambda_{\overline H}$ is zero if and only if there is an orbit
$\eta:\mathbb{R}\to\overline{\Omega}$, $\dot\eta=-\overline
b(\eta)$. We moreover assume that
%
%
\begin{equation}
\label{fixedpoints}
\begin{array}{l}
\mathcal{A}_{\overline H}\not=\emptyset \ \text{and}\
\mathcal{A}_{\overline H}\subset \Omega,
\\
\mathcal{A}_{\overline H}\ \text{is a finite set of hyperbolic
fixed points $\xi$ of the ODE $\dot x=-\overline b(x)$}.
\end{array}
\end{equation}
Under this assumption we are able to study the leading (of
order $\ve$) term of the asymptotic expansion of $\lambda_\ve$.
This in turn allows us to establish a sufficient condition for
selecting the unique limit of functions $W_\ve$ among solutions of
the homogenized problem (\ref{homprob}), (\ref{hombcond}).

\begin{thm}
\label{thwithoutdissipation} Let $\alpha=1$ and $c(x,y)=0$. Then,
under condition (\ref{fixedpoints}) we have
\begin{equation}
\lambda_\ve=\ve \overline \sigma+\bar o(\ve),\qquad \text{where}\
\overline\sigma=\max \{\sigma(\xi);\,
\xi\in\mathcal{A}_{\overline H}\}, \label{selection}
\end{equation}
and $\sigma(\xi)$ is the sum of negative real parts of the
eigenvalues of the matrix
$$
-B^{ij}(\xi)=-\frac{\partial \overline b^j}{\partial x_i}(\xi)
$$
corresponding to the linearized effective drift at $\xi$ (since
every fixed point $\xi$ is assumed to be hyperbolic $-B(\xi)$ has
no eigenvalues with zero real part). Moreover, if the maximum in
(\ref{selection}) is attained at exactly one $\xi=\overline{\xi}$ then

\medskip\noindent
(i) the scaled logarithmic transformations $W_\ve=-\ve\log u_\ve$ of
eigenfunctions $u_\ve$ (normalized by (\ref{normalization}))
converge uniformly on compacts in $\Omega$ to
 $W(x)=d_{\overline H}(x,\overline{\xi})$,
i.e. $W$ is the maximal viscosity solution of $\overline H(\nabla
W(x),x)=0$ in $\Omega$, $\overline H(\nabla W(x),x)\geq 0$ on
$\partial\Omega$, such that $W(\overline\xi)=0$;

\medskip\noindent
(ii)
$u_\ve(\overline{\xi}+\sqrt{\ve} z)\to u(z)$
in $C(K)$ and weakly in $H^1(K)$ for every compact $K$, and the limit $u$ is the
unique positive eigenfunction of the Ornstain-Uhlenbeck operator,
\begin{equation}
\label{orst_uhl}
Q^{ij}\frac{\partial^2 u}{\partial z_i\partial z_j}+z_iB^{ij}\frac{\partial u}{\partial z_j}
 =\overline{\sigma} u \qquad \text{in}\ \mathbb{R}^N,
\end{equation}
normalized by $u(0)=1$ and satisfying the following condition, $u(z)e^{\mu|\Pi_s^\ast
z|^2-\nu |\Pi_u^\ast z|^2}$ is bounded on $\mathbb{R}^N$
for some $\mu>0$ and every $\nu>0$.
The coefficients in (\ref{orst_uhl}) are given by  $B^{ij}=B^{ij}(\overline \xi)$,
$Q^{ij}=\frac{1}{2}\frac{\partial^2 \overline{H}}{\partial p_i\partial p_j}(0,\overline{\xi})$;
$\Pi_s$ and $\Pi_u$ denote spectral projectors on the invariant
subspaces of the matrix $B$ corresponding to the eigenvalues with
positive and negative real parts (stable and unstable subspaces of
the system $\dot z_i=-B^{ij}z_j$).
\end{thm}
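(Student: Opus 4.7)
My plan is to carry out a local analysis at the diffusive scale $\sqrt{\ve}$ in a neighbourhood of each hyperbolic fixed point of the effective drift, combined with the $\ve$-scale periodic homogenisation via a two-scale ansatz. This reduces the eigenproblem at each $\xi\in\mathcal{A}_{\overline H}$ to an Ornstein--Uhlenbeck (OU) eigenproblem on $\R^N$ whose principal eigenvalue among positive, growth-bounded functions is $\sigma(\xi)$, so the global eigenvalue $\lambda_\ve$ is pinned down by the largest of these local eigenvalues. Concretely, near $\xi\in\mathcal{A}_{\overline H}$ set $z=(x-\xi)/\sqrt{\ve}$ and $y=x/\ve$, and seek
\[
u_\ve(x)=\Bigl(v_0(z)+\sqrt{\ve}\,v_1(z,y)+\ve\,v_2(z,y)+\cdots\Bigr)\,\theta(x,y),
\]
where $\theta(x,y)$ is the positive $Y$-periodic eigenfunction from (\ref{effproblresonans}) at $p=0$ (so $\overline H(0,x)=0$ since $c\equiv 0$). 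Substituting, dividing by $\ve$, and collecting powers of $\sqrt{\ve}$: the $O(\ve^{-1/2})$ equation is solvable because $\overline b(\xi)=0$, and the $O(1)$ Fredholm condition obtained by testing against the dual eigenfunction $\theta^\ast(\xi,\cdot)$ from (\ref{effdualproblresonans}) yields exactly (\ref{orst_uhl}) for $v_0$, with $Q$ and $B$ as in the statement. A diagonalisation of $B(\xi)$ combined with a Gaussian-times-polynomial ansatz for $v_0$ (whose quadratic form solves an algebraic Riccati equation) shows that (\ref{orst_uhl}) admits a one-dimensional cone of positive solutions obeying the prescribed growth bound precisely when $\sigma=\sigma(\xi)$, and no such solution exists for any $\sigma>\sigma(\xi)$.

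For the asymptotics $\lambda_\ve=\ve\overline\sigma+\bar o(\ve)$ I would argue via matching inequalities. To lower-bound $\lambda_\ve$, pick $\overline\xi$ realising $\overline\sigma$, let $v_{\overline\xi}$ be the corresponding local OU principal eigenfunction, and construct a positive approximate eigenfunction $\phi_\ve(x)\approx v_{\overline\xi}((x-\overline\xi)/\sqrt{\ve})\,\theta(x,x/\ve)$ equipped with enough higher-order correctors to ensure $\mathcal L_\ve\phi_\ve\geq(\ve\overline\sigma+o(\ve))\phi_\ve$ locally, then patch with a cut-off that respects the Gaussian decay of $v_{\overline\xi}$ in stable directions; the sub-solution characterisation of $\lambda_\ve$ then yields $\lambda_\ve\geq\ve\overline\sigma+o(\ve)$. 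For the matching upper bound, let $x_\ve^\ast$ be a maximum point of $u_\ve$; by Theorem~\ref{mainth}, $W_\ve(x_\ve^\ast)=0$ and $W_\ve\to W$ locally uniformly, so any subsequential limit $\xi^\ast$ of $x_\ve^\ast$ satisfies $W(\xi^\ast)=0$ and therefore lies in $\mathcal{A}_{\overline H}$ via the representation (\ref{represnt_forH}). Rescaling $v_\ve(z)=u_\ve(\xi^\ast+\sqrt{\ve}z)/u_\ve(\xi^\ast)$ and using Harnack plus standard elliptic estimates, a subsequential limit $v$ is a nontrivial positive OU eigenfunction at $\xi^\ast$ with eigenvalue $\lim\lambda_\ve/\ve$, inheriting the prescribed growth bounds from $W\geq 0$, $W(\xi^\ast)=0$ (Gaussian decay in stable directions) and $u_\ve\leq 1$ (no super-polynomial growth in unstable directions). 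The uniqueness from the first paragraph forces $\lim\lambda_\ve/\ve=\sigma(\xi^\ast)\leq\overline\sigma$; combined with the lower bound this gives equality and $\sigma(\xi^\ast)=\overline\sigma$, so under the uniqueness hypothesis $\xi^\ast=\overline\xi$.

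For the selection of $W$ and part~(ii), $x_\ve^\ast\to\overline\xi$ implies $W(\overline\xi)=0$, and (\ref{represnt_forH}) gives immediately $W(x)\leq d_{\overline H}(x,\overline\xi)$. For the matching lower bound it suffices to show $W(y)=d_{\overline H}(y,\overline\xi)$ for every $y\in\mathcal{A}_{\overline H}$: if $W(y)<d_{\overline H}(y,\overline\xi)$ strictly at some $y\neq\overline\xi$, then $u_\ve(y)$ decays slower than $e^{-d_{\overline H}(y,\overline\xi)/\ve}$, so the rescaled $u_\ve(y+\sqrt{\ve}z)/u_\ve(y)$ extracts (by Harnack and compactness) a nontrivial positive limit satisfying the OU problem at $y$ with eigenvalue $\overline\sigma>\sigma(y)$, contradicting the non-existence part of the first paragraph. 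This proves (i); (ii) follows from the local convergence of the second paragraph applied at $\overline\xi$, with normalisation $u(0)=1$ coming from $u_\ve(\overline\xi)\to 1$ (by Harnack, $u_\ve(x_\ve^\ast)=1$, $|x_\ve^\ast-\overline\xi|=O(\sqrt{\ve})$, and the constancy of the limit OU eigenfunction along the unstable subspace of $B(\overline\xi)$ on which $x_\ve^\ast$ sits in the local scale).

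The main technical obstacle is the rigorous passage to the OU limit in the upper-bound step and the contradiction argument above, while preserving the prescribed Gaussian-decay/sub-Gaussian-growth bounds uniformly in $\ve$: the rescaled domain expands as $\ve\to 0$ and the homogenisation remainders in the two-scale expansion grow with $|z|$. One route is to truncate at $|z|\sim\ve^{-1/4}$ (where the remainders are still $O(\sqrt{\ve})$) and match with outer estimates on $u_\ve$ coming from the limit Hamilton--Jacobi problem via (\ref{represnt_forH}); another is to derive a sharp quadratic lower bound $W_\ve(x)\geq\tfrac12(x-\overline\xi)^\top M(\overline\xi)(x-\overline\xi)-C\ve$ in a fixed neighbourhood of $\overline\xi$ via a second-order perturbed-test-function argument, $M$ being the Hessian of $d_{\overline H}(\cdot,\overline\xi)$ at $\overline\xi$. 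This sharp quadratic control is precisely what is needed to verify the growth bound of the limit OU eigenfunction and to run the contradiction in the identification of $W$.
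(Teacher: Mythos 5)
Your overall plan—blow-up at the hyperbolic fixed points of $\overline{b}$, reduction to an Ornstein--Uhlenbeck eigenproblem whose principal eigenvalue among positive, growth-constrained solutions is $\sigma(\xi)$, and matching lower/upper bounds—coincides in spirit with the paper's Sections~\ref{SecLowerbound}--\ref{SecUpperbound}. The identification of the limit $W=d_{\overline H}(\cdot,\overline\xi)$ and the statement of Lemma~\ref{uniquness}-type uniqueness also match. However, both halves of your asymptotics argument have genuine gaps at precisely the points where the paper does real work.

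For the lower bound you propose to build an approximate eigenfunction $\phi_\ve\approx v_{\overline\xi}((x-\overline\xi)/\sqrt\ve)\,\theta(x,x/\ve)$ and apply the sup-inf characterisation of $\lambda_\ve$ after a cut-off. The difficulty is that $v_{\overline\xi}(z)=e^{-\Gamma z\cdot z}$ with $\Gamma=\Pi_s\Gamma\Pi_s^\ast$ degenerate: it is constant along the unstable subspace. Along those directions the cut-off error is $\ve\, b^j\partial_j\chi\cdot v_{\overline\xi}\theta$, which is of the same order $O(\ve)$ as the quantity $\ve\overline\sigma\phi_\ve$ you are trying to isolate, and whose sign oscillates with $b(x,x/\ve)$. (Note also that at $p=0$, $c\equiv 0$ the corrector $\theta$ from (\ref{effproblresonans}) is constant, so the leading corrector gives you no help in averaging this term.) The paper avoids this entirely by penalising the operator with $-\delta|x-\xi|^2/\ve$: the auxiliary eigenfunction then decays like a nondegenerate Gaussian (governed by the Riccati solution $\Gamma_\delta>0$), vanishes on $\partial\Omega$ by construction, and one only needs the monotonicity of the principal eigenvalue in the potential plus the limit $\Gamma_\delta\to\Gamma$ from Appendix~\ref{C}. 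You would need something equivalent, and it is not visible in your sketch.

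For the upper bound you correctly identify the need for a uniform-in-$\ve$ quadratic lower bound on $W_\ve$ near the concentration point to get the growth condition (\ref{specialclass}) for the blow-up limit; this is indeed the crux. But your proposed remedy, $W_\ve(x)\geq\tfrac12(x-\overline\xi)^\top M(\overline\xi)(x-\overline\xi)-C\ve$ with $M$ the Hessian of $d_{\overline H}(\cdot,\overline\xi)$, is misaimed: that Hessian is only positive semidefinite (it equals $2\Gamma$, which vanishes on the unstable subspace), so it cannot encode the sub-Gaussian growth allowed in the unstable directions, and moreover there is no reason for $d_{\overline H}(\cdot,\overline\xi)$ to be $C^2$ at $\overline\xi$. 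The paper instead builds the \emph{indefinite} quadratic form $\Phi_\mu^\nu=\mu\phi_s-\nu\phi_u$ from Lyapunov matrices (Lemma~\ref{lemphimunu}), proves it is a strict $\overline H$-subsolution, uses the variational representation of $d_{\overline H}$ at a \emph{significant} point to deduce $W>\Phi_\mu^\nu$ in a punctured neighbourhood (Lemma~\ref{lemsubsol}, which also relies on (\ref{fixedpoints}) to rule out accumulation of the Aubry set), and then transfers this to $W_\ve$ via the perturbed test function $\Psi_\ve$ and the maximum principle. Relatedly, your contradiction argument for showing $W=d_{\overline H}(\cdot,\overline\xi)$ on the rest of the Aubry set (the rescaling around a negligible point $y$) again requires establishing the growth bound there, so it leans on exactly the unfinished step; the paper sidesteps this through the partial order $\preceq$ on $\mathcal A_{\overline H}$, showing that the minimal element is the unique significant point and hence $\overline\xi$. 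Finally, you rescale around the maximum $x^\ast_\ve$ of $u_\ve$ rather than around a fixed point $\xi$ after renormalising $u_\ve(\xi)=1$; without first controlling $|x^\ast_\ve-\overline\xi|=O(\sqrt\ve)$ (which you invoke but do not prove) this normalisation is not innocuous.
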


\begin{rem}\label{perturbedAubry}
Condition (\ref{fixedpoints}) is satisfied, in
particular, when the vector field $b(x,y)$ is a $C^1$-small
perturbation of a gradient field $\nabla P(x)$ with $C^2$
potential $P(x)$ having the following properties: the set $\{x\in
\overline \Omega;\, \nabla P(x)=0\}$ is formed by a finite
collection of points in $\Omega$ and the Hessian matrix
$\left(\frac{\partial^2}{\partial x_i \partial
x_j}P(x)\right)_{i,j=\overline{1,N}}$ at every such a point is
nonsingular (see Appendix \ref{B}).

\medskip\noindent
Condition (\ref{fixedpoints}) is satisfied if and only if the vector field $\overline b$ possesses the following properties:

\smallskip\noindent
$\bullet$
$\overline b$ has a finite number of
fixed points
in $\overline\Omega$, say $\xi^1,\dots \xi^n$. All of them are hyperbolic, and none of them is situated on $\partial\Omega$.

\smallskip\noindent
$\bullet$
$\forall \,y\in\overline\Omega$,  either
$\sup\{t<0\,:\,x^y(t)\not\in\overline\Omega\}>-\infty$, or
$\lim\limits_{t\to-\infty}x^y(t)=\xi^j$ for some $j\in\{1,\dots,n\}$, where
$x^y$ is a solution of the ODE \ $\dot x^y=-\overline b(x^y)$, $x^y(0)=y$.

\smallskip\noindent
$\bullet$
there is no any closed path
$\xi^{j_1},\xi^{j_2},\dots,\xi^{j_k}=\xi^{j_1}$ with $k\geq 2$ such that for any two consecutive points $\xi^{j_s}$ and
$\xi^{j_{s+1}}$ there is a solution of the equation $\dot x=-\overline b(x)$ with
$\lim\limits_{t\to -\infty}x(t)=\xi^{j_s}$
and $\lim\limits_{t\to +\infty}x(t)=\xi^{j_{s+1}}$. Note that $\xi^{j_1}$ might coincide with $\xi^{j_2}$.
\end{rem}

\begin{rem} It is not hard to show that under
condition (\ref{fixedpoints}) we have $S_{\overline
H}(\xi,\xi^\prime)>0$ $\forall \xi,\xi^\prime\in
\mathcal{A}_{\overline H}$, $\xi\not=\xi^\prime$. This means that
problem (\ref{homprob}), (\ref{hombcond}) does have many solutions
unless $\mathcal{A}_{\overline H}$ is a single point.
\end{rem}

Note that condition (\ref{fixedpoints}) of Theorem \ref{thwithoutdissipation} assumes, in particular,
that all $\omega$(and $\alpha$)-limit
points of the ODE $\dot x=-\overline{b}(x)$ are fixed points. Another important case,
 when the ODE $\dot x=-\overline{b}(x)$ has
limit cycles in $\Omega$ (which is also the case of general position),
will be considered in a separate paper.



\section{Singularly perturbed operators in the periodic setting}

Consider the spectral problem for singularly perturbed elliptic operators of the form
\label{periodS}
\begin{equation}
\label{operper}
\mathcal{L}_\ve^{(per)}u=\ve^2a^{ij}(x)\frac{\partial^2 u}
{\partial x_i\partial x_j} +\ve
b^{j}(x)\frac{\partial u}{\partial x_j} +c(x) u,
\end{equation}
with $Y$-periodic coefficients $a^{ij},\, b^j,\, c\in C^1(\mathbb{R}^N)$,  $u$ also
being $Y$-periodic. We assume the uniform ellipticity condition
$a^{ij}(x)\zeta_i\zeta_j\geq m |\zeta|^2>0$ $\forall
\zeta\in\mathbb{R}^N\setminus\{0\}$ and the symmetry
$a^{ij}=a^{ji}$. Similarly to the case of the Dirichlet boundary
condition, the first eigenvalue $\mu_\ve$ of $\mathcal{L}_\ve^{(per)}$
(eigenvalue with the maximal real part) is a real and simple
eigenvalue, the corresponding eigenfunction $u_\ve$ can be chosen
to satisfy $0< u_\ve(x)\leq \max u_\ve=1$. The asymptotic behavior
of $\mu_\ve$ as $\ve\to 0$ and $u_\ve$ was studied in \cite{Pi}
using a combination of large deviation and variational techniques.
We recover hereafter the results of \cite{Pi} by means of
vanishing viscosity approach and establish as a bi-product some
bounds for derivatives of functions
$
W_\ve(x)=-\ve \log u_\ve(x)
$
that are essential in the proof of Theorem \ref{mainth}.

First we derive the a priori bounds for the eigenvalues.

\begin{lem}
\label{estforLambdaper} For every $\ve>0$ the eigenvalue $\mu_\ve$
of $L^{(per)}_\ve$ satisfies the inequalities
\begin{equation}
\label{boundsforperidiclambda} \min c(x)\leq \mu_\ve\leq \max
c(x).
\end{equation}
\end{lem}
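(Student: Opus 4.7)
The plan is to use the maximum principle directly on the eigenfunction equation $\mathcal{L}_\ve^{(per)} u_\ve = \mu_\ve u_\ve$, exploiting the fact that $u_\ve$ is strictly positive and $Y$-periodic, hence attains both its maximum and a strictly positive minimum on $\overline{Y}$. Since the coefficients $a^{ij}$, $b^j$, $c$ are continuous and $Y$-periodic, the pointwise values at these extrema are well defined and enjoy the usual sign properties of first and second derivatives.

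For the upper bound, I would pick $x_0 \in \overline{Y}$ with $u_\ve(x_0) = \max u_\ve = 1$. At such an interior (in the sense of periodicity) extremum, $\nabla u_\ve(x_0) = 0$ and the Hessian $(\partial_{ij}u_\ve(x_0))$ is negative semidefinite. Since $(a^{ij}(x_0))$ is positive definite and symmetric, the contraction $a^{ij}(x_0)\partial_{ij}u_\ve(x_0)$ is nonpositive. Evaluating the eigenvalue equation at $x_0$ therefore gives
\begin{equation*}
\mu_\ve = \mu_\ve u_\ve(x_0) = \ve^2 a^{ij}(x_0)\partial_{ij}u_\ve(x_0) + 0 + c(x_0)u_\ve(x_0) \leq c(x_0) \leq \max c.
\end{equation*}

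For the lower bound the argument is symmetric, applied at a point $x_1$ where $u_\ve$ attains its minimum over $\overline{Y}$. Here $\nabla u_\ve(x_1)=0$, the Hessian is positive semidefinite, so $a^{ij}(x_1)\partial_{ij}u_\ve(x_1) \geq 0$, and since $u_\ve(x_1) > 0$ (the first eigenfunction is strictly positive by Krein--Rutman / strong maximum principle, which is standing here) we may divide by $u_\ve(x_1)$ to obtain $\mu_\ve \geq c(x_1) \geq \min c$.

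There is no real obstacle: the only point that deserves a line of explanation is the positivity $u_\ve > 0$ everywhere, which is needed to divide in the lower estimate and is already part of the setup preceding the lemma. The periodicity is crucial in that it guarantees the extrema are attained at interior points where the classical second-order conditions apply, obviating any boundary considerations.
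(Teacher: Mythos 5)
Your proposal is correct and is essentially identical to the paper's argument: evaluate the eigenvalue equation at a maximum (resp. minimum) point of the periodic eigenfunction, where the gradient vanishes and the sign of $a^{ij}\partial_{ij}u_\ve$ is controlled by the second-order conditions, then divide by the strictly positive value $u_\ve$. The only difference is that you spell out the trace-of-product sign argument and the interiority-via-periodicity remark a bit more explicitly.
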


\begin{proof} Let $x^\prime$ be a maximum point of $u_\ve$, we have
$$
\nabla u_\ve(x^\prime)=0,\qquad
\ve^2a^{ij}(x^\prime)\frac{\partial^2 u_\ve}
{\partial x_i\partial x_j}(x^\prime) \leq 0,
$$
therefore $c(x^\prime) u_\ve(x^\prime)\geq\mu_\ve
u_\ve(x^\prime)$, i.e. $\mu_\ve\leq \max c(x)$. Similarly, if
$x^{\prime\prime}$ is a minimum point of $u_\ve$ then $\mu_\ve
u_\ve(x^{\prime\prime})\geq
c(x^{\prime\prime})u_\ve(x^{\prime\prime})$ and therefore
$\mu_\ve\geq \min c(x)$.
\end{proof}

Since $u_\ve=e^{-W_\ve(x)/\ve}$ we have
\begin{equation}
\label{eqforWper} -\ve a^{ij}(x)\frac{\partial^2 W_\ve}{\partial
x_i\partial x_j}+ a^{ij}(x)\frac{\partial W_\ve}{\partial
x_i}\frac{\partial W_\ve}{\partial x_j}- b^j(x)\frac{\partial
W_\ve}{\partial x_j}+c(x)=\mu_\ve.
\end{equation}
The bounds for the first and second derivatives of
 $W_\ve(x)$ are obtained in the following

\begin{lem}
\label{estimatesWper}  There is a constant $C$, independent of $\ve$, such
that
\begin{equation}
\label{derivatives}
\max |\nabla W_\ve|\leq C, \qquad  \max |\partial^2 W_\ve/\partial x_i\partial x_j|\leq C/\ve.
\end{equation}
\end{lem}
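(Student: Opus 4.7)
The plan is to prove both bounds by Bernstein's method: by periodicity, $v := |\nabla W_\ve|^2$ attains its maximum on $\mathbb{R}^N$ at some point $x_0$, so a pointwise analysis of a differentiated version of (\ref{eqforWper}) at $x_0$ delivers the gradient estimate. The Hessian bound then follows from elliptic regularity applied to (\ref{eqforWper}) viewed as a linear elliptic equation for $W_\ve$ whose right-hand side is controlled by the gradient bound.

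For the gradient estimate I would differentiate (\ref{eqforWper}) in $x_l$, multiply by $D_l W_\ve$, and sum over $l$. Using the identities
\[
D_k v = 2 D_l W_\ve\, D^2_{kl}W_\ve,\qquad D^2_{ij}v = 2 D^2_{il}W_\ve\, D^2_{jl}W_\ve + 2 D_l W_\ve\, D^3_{ijl}W_\ve,
\]
the resulting equality rewrites in the form
\[
-\tfrac{\ve}{2}a^{ij}D^2_{ij}v + \ve\, a^{ij}D^2_{il}W_\ve D^2_{jl}W_\ve + a^{ij}D_j W_\ve\, D_i v - \tfrac12 b^j D_j v = R_\ve,
\]
where $R_\ve$ gathers the "remainder" terms obtained by differentiating $a^{ij}, b^j, c$, the critical one being the cross term $\ve (D_l a^{ij}) D^2_{ij}W_\ve\, D_l W_\ve$. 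At the maximum point $x_0$, the first-order derivatives of $v$ vanish, $-\tfrac{\ve}{2}a^{ij}D^2_{ij}v(x_0)\ge 0$, and uniform ellipticity (\ref{ellipt}) gives $\ve\, a^{ij}D^2_{il}W_\ve D^2_{jl}W_\ve \ge m\ve |D^2 W_\ve(x_0)|^2$. Absorbing the cross term by Young's inequality into half of this dissipation yields
\[
\ve|D^2 W_\ve(x_0)|^2 \le C\bigl(1 + v_0^{3/2}\bigr),\qquad v_0 := v(x_0).
\]
Plugging the consequent estimate $\ve|D^2 W_\ve(x_0)| \le C\sqrt{\ve}(1 + v_0^{3/4})$ into equation (\ref{eqforWper}) evaluated at $x_0$, together with the coercivity $m v_0 \le a^{ij}D_i W_\ve D_j W_\ve$ and Lemma \ref{estforLambdaper}, produces $m v_0 \le C + C\sqrt{\ve}(1 + v_0^{3/4}) + C v_0^{1/2}$, which for $\ve\le 1$ forces $v_0 \le C$, i.e. $\|\nabla W_\ve\|_{L^\infty}\le C$.

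Once the gradient bound is in hand, equation (\ref{eqforWper}) takes the form $\ve\, a^{ij}(x) D^2_{ij}W_\ve(x) = g_\ve(x)$ with $g_\ve = a^{ij}D_i W_\ve D_j W_\ve - b^j D_j W_\ve + c - \mu_\ve$ uniformly bounded. The bound $\|D^2 W_\ve\|_{L^\infty}\le C/\ve$ then follows from standard interior elliptic regularity (Calder\'on--Zygmund gives $\|D^2 W_\ve\|_{L^p}\le C_p/\ve$ for every $p<\infty$, and a Morrey--Schauder bootstrap performed on the natural scale $\sqrt{\ve}$ upgrades this to the $L^\infty$ bound), or alternatively from a second Bernstein-type argument applied to $|D^2 W_\ve|^2$. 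The chief technical obstacle in the Bernstein step is precisely the cross term $\ve(D_l a^{ij})D^2_{ij}W_\ve D_l W_\ve$, which contains the still uncontrolled Hessian and a priori dominates the ellipticity; the resolution, and the crux of the proof, is that the viscous dissipation $\ve\, a^{ij}D^2_{il}W_\ve D^2_{jl}W_\ve$ furnished automatically by the second derivative of $v$ is exactly strong enough for Young's inequality to absorb it.
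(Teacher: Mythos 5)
Your gradient estimate is correct and follows essentially the same route as the paper: a Bernstein argument at a maximum point of $v=|\nabla W_\ve|^2$, where the viscous dissipation $\ve\,a^{ij}D^2_{ik}W_\ve D^2_{jk}W_\ve\ge m\ve|D^2W_\ve|^2$ absorbs the cross term $\ve(D_la^{ij})D^2_{ij}W_\ve D_lW_\ve$ via Young's inequality, and the resulting $\ve|D^2W_\ve(x_0)|$ bound is fed back into (\ref{eqforWper}). (You in fact include the cubic term $(D_ka^{ij})D_iW_\ve D_jW_\ve D_kW_\ve$, which the paper's display (\ref{equationforderivatives}) appears to drop; your extra $v_0^{3/2}$ term is harmless since it is still sub-quadratic, so this is a minor improvement in rigor.)

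The Hessian estimate, however, has a genuine gap. Viewing (\ref{eqforWper}) as $\ve\,a^{ij}D^2_{ij}W_\ve=g_\ve$ with $g_\ve\in L^\infty$ does give $\|D^2W_\ve\|_{L^p}\le C_p/\ve$ by Calder\'on--Zygmund, but neither this nor the suggested ``Morrey--Schauder bootstrap on scale $\sqrt{\ve}$'' upgrades it to $L^\infty$: Schauder theory requires the right-hand side to be H\"older continuous, and after rescaling $z=x/\sqrt{\ve}$ the H\"older seminorm of $g_\ve(\sqrt{\ve}z)$ in $z$ involves precisely the modulus of continuity of $\nabla W_\ve$ at scale $\sqrt{\ve}$ --- that is, the very Hessian bound being sought, so the argument is circular. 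The alternative you mention, a second Bernstein argument on $|D^2W_\ve|^2$, also fails to close: differentiating the quadratic term $a^{ij}D_iW_\ve D_jW_\ve$ twice and pairing with $D^2_{kl}W_\ve$ produces a cubic-in-Hessian term $a^{ij}D^2_{ik}W_\ve D^2_{jl}W_\ve D^2_{kl}W_\ve\sim D_2^{3/2}$ which the dissipation $\ve\,m\,|D^3W_\ve|^2$ cannot absorb. The paper's device --- the Landau-type interpolation inequality $\|\nabla u\|^2_{L^\infty}\le C(\|a^{ij}D^2_{ij}u\|_{L^\infty}+\|u\|_{L^\infty})\|u\|_{L^\infty}$ for periodic $u$, applied to $u=D_kW_\ve$ in combination with the differentiated equation (\ref{equationforderivatives}) --- is precisely the tool that converts the already-established gradient bound and the equation into the $C/\ve$ Hessian bound, and your proposal needs it (or an equivalent substitute) to be complete.
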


\begin{proof} The proof of the first bound in (\ref{derivatives}) is borrowed from \cite{EI}.
Let $D_1(x):=|\nabla W_\ve(x)|^2$ and
$D_2(x):=\sum |\partial^2 W_\ve(x)/\partial x_i\partial x_j|^2$ . From (\ref{eqforWper}) in
conjunction with (\ref{boundsforperidiclambda}) we get $m D_1\leq C(\ve D_2^{1/2}+D_1^{1/2}+1)$,
this in turn implies that
\begin{equation}
\label{Dapriori}
D_1\leq C(\ve D_2^{1/2}+1).
\end{equation}

Assume that $D_1$ attains its maximum at a point $x^\prime$, then
we have $\nabla D_1(x^\prime)=0$ and $a^{ij}(x^\prime)\frac{\partial^2
D_1}{\partial x_i\partial x_j}(x^\prime)\leq 0$ or
\begin{equation}
\label{zerograd}
\frac{\partial^2 W_\ve}{\partial x_i\partial x_k}(x^\prime)\frac{\partial W_\ve}{\partial x_k}(x^\prime)=0
\end{equation}
and
\begin{equation}
\label{maxequal}
\ve\sum_k a^{ij}\frac{\partial^2 W_\ve}{\partial x_i\partial x_k}
\frac{\partial^2 W_\ve}{\partial x_j\partial x_k}\leq -
\ve\sum_k a^{ij}\frac{\partial^3 W_\ve}{\partial x_i\partial x_j\partial x_k}\frac{\partial W_\ve}{\partial x_k} \qquad
\text{at}\ x^\prime.
\end{equation}
In order to bound the right hand side of  (\ref{maxequal}) we take
derivatives of (\ref{eqforWper}), this yields
\begin{equation}
-\ve a^{ij}\frac{\partial^3 W_\ve}{\partial x_i\partial x_j\partial x_k}
=\ve \frac{\partial a^{ij}}{\partial x_k}\frac{\partial^2 W_\ve}{\partial x_i\partial x_j}
-2a^{ij}\frac{\partial^2 W_\ve}{\partial x_i\partial x_k}\frac{\partial W_\ve}{\partial x_j} +b^i
\frac{\partial^2 W_\ve}{\partial x_i\partial x_k}+\frac{\partial b^{i}}{\partial x_k}
\frac{\partial W_\ve}{\partial x_i}
-\frac{\partial c}{\partial x_k}.
\label{equationforderivatives}
\end{equation}
Then we multiply (\ref{equationforderivatives}) by $\partial W_\ve/\partial x_k$,
sum up the equations in $k$ and insert the result into (\ref{maxequal}) to obtain
\begin{equation*}
\ve m D_2(x^\prime)\leq
\ve\sum_k a^{ij}(x^\prime)\frac{\partial^2 W_\ve}{\partial x_i\partial x_k}(x^\prime)
\frac{\partial^2 W_\ve}{\partial x_j\partial x_k}(x^\prime)\\
\leq   C
\Big(\ve D_1^{1/2}(x^\prime)D_2^{1/2}(x^\prime)  + D_1(x^\prime)+D_1^{1/2}(x^\prime)\Big).
\end{equation*}
Next we use (\ref{Dapriori}) to get that $D_2(x^\prime)\leq C/\ve$, and
exploiting once more (\ref{Dapriori}) we obtain the first bound in (\ref{derivatives}).

To show the second bound in (\ref{derivatives}) we use the following interpolation inequality
\begin{equation}
\label{interpolation}
\|\nabla u\|_{L^\infty}^2\leq
C(\|a^{ij}\partial^2 u/\partial x_i\partial x_j\|_{L^\infty}+\|u\|_{L^\infty})\|u\|_{L^\infty},
\end{equation}
which holds for every $Y$-periodic $u$ with a constant $C$
independent of $u$.
The proof of this inequality follows the lines
of one in the Appendix of \cite{BBH} (here it is important that the coefficients $a^{ij}$ are
Lipschitz continuous).
We apply (\ref{interpolation}) to (\ref{equationforderivatives}) to obtain
\begin{equation}
\label{resultinequality}
\|\partial^2 W_\ve/\partial x_l\partial x_k\|_{L^\infty}^2 \leq \frac{C}{\ve}
(\sum \|\partial^2 W_\ve/\partial x_i\partial x_j\|_{L^\infty}+1)\qquad \forall l,k,
\end{equation}
here we have also used the first bound in (\ref{derivatives}).
From (\ref{resultinequality}) one easily derives the second bound in (\ref{derivatives}).
\end{proof}

It follows from Lemma \ref{estforLambdaper} that $\mu_\ve\to\mu$, up to extracting
a subsequence. Due to Lemma~\ref{estimatesWper}
the family of functions $W_\ve(x)$ is equicontinuous, moreover
$\min W_\ve(x) =0$ therefore passing to a further subsequence (if
necessary) we have  $W_\ve(x)\to W(x)$ uniformly. The
standard arguments show that the pair $\mu$ and $W$ satisfies the
equation
\begin{equation}
\label{viscperiod} a^{ij}(x)\frac{\partial W}{\partial
x_i}\frac{\partial W}{\partial x_j} -b^{j}(x)\frac{\partial
W}{\partial x_j}+c(x)=\mu
\end{equation}
in the viscosity sense.The number $\mu$ for which (\ref{viscperiod}) has a periodic
viscosity solution is unique (see \cite{LPV},\cite{E}), therefore
the entire sequence $\mu_\ve$ converges to $\mu$ as $\ve\to 0$.

\section{A priori bounds}

In this section we show that the eigenvalues $\lambda_\ve$ of
(\ref{operator}) are uniformly bounded  and the functions $W_\ve$
(given by (\ref{logtransf})) uniformly converge on compacts in
$\Omega$ as $\ve\to 0$, up to extracting a subsequence. We also
prove an auxiliary result on the behavior of the minimum points of
$W_\ve-\phi$ (where $\phi$ is an arbitrary $C^2$ function) which
is important in the subsequent analysis.

Because of the Dirichlet boundary condition on the boundary
$\partial\Omega$ and fast oscillations of the coefficients the
arguments here are more involved than those in the periodic case.


\begin{lem}
\label{boundsforlambda}
There is a constant $\Lambda$ independent of $\ve$ and such that
\begin{equation}
\label{boundsfolambdaDir} -\Lambda \leq \lambda_\ve\leq\sup c(x,y).
\end{equation}
\end{lem}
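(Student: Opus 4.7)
\medskip

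\textbf{Plan of proof.} The upper bound is routine maximum–principle at an interior maximum of $u_\ve$, while the lower bound is the genuine work and will require a carefully chosen positive test function vanishing on $\partial\Omega$.

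\medskip

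\textbf{Upper bound $\lambda_\ve\leq\sup c$.} Because $u_\ve>0$ in $\Omega$ and $u_\ve=0$ on $\partial\Omega$, the eigenfunction attains its global maximum at some interior point $x'\in\Omega$. At $x'$ we have $\nabla u_\ve(x')=0$ and the Hessian of $u_\ve$ is negative semidefinite, so by the ellipticity hypothesis (\ref{ellipt})
\[
a^{ij}(x',x'/\ve^\alpha)\,\frac{\partial^2 u_\ve}{\partial x_i\partial x_j}(x')\leq 0.
\]
Substituting these facts into the eigenvalue equation $\mathcal{L}_\ve u_\ve(x')=\lambda_\ve u_\ve(x')$ leaves only the zeroth order term on the left, giving $c(x',x'/\ve^\alpha)u_\ve(x')\geq\lambda_\ve u_\ve(x')$. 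Dividing by $u_\ve(x')>0$ yields $\lambda_\ve\leq c(x',x'/\ve^\alpha)\leq\sup c$. This is entirely parallel to the periodic case treated in Lemma~\ref{estforLambdaper}, the only new input being that the maximum point $x'$ is automatically interior because of the Dirichlet boundary condition.

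\medskip

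\textbf{Lower bound $\lambda_\ve\geq-\Lambda$.} This is the real obstacle: the global minimum of $u_\ve$ on $\overline\Omega$ is $0$ and is attained on $\partial\Omega$, so the preceding argument cannot be run from below. The plan is to invoke a Barta-type characterization of the principal Dirichlet eigenvalue,
\[
\lambda_\ve\;\geq\;\inf_{x\in\Omega}\frac{\mathcal{L}_\ve v(x)}{v(x)}
\]
valid for every $v\in C^2(\Omega)\cap C(\overline\Omega)$ with $v>0$ in $\Omega$ and $v=0$ on $\partial\Omega$, and to exhibit one such $v=v_\ve$ for which this quotient is bounded below by $-\Lambda$ uniformly in $\ve$. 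The Barta inequality itself follows from the computation already present in the proof of Lemma~\ref{estimatesWper}-style arguments: write $u_\ve=vw$ with $w=u_\ve/v$ (which extends continuously to $\overline\Omega$ by the Hopf lemma applied to both $u_\ve$ and $v$), apply $\mathcal{L}_\ve$ to the product at an interior minimum point of $w$, and use that at such a point the derivative terms contribute non-negatively.

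\medskip

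\textbf{Choice of the test function.} A natural candidate is $v_\ve(x)=1-e^{-\phi(x)/\ve}$, where $\phi\in C^2(\overline\Omega)$ is positive in $\Omega$, vanishes on $\partial\Omega$, and satisfies $|\nabla\phi|>0$ on $\partial\Omega$. Away from $\partial\Omega$ one has $v_\ve\to 1$ and $\nabla v_\ve,\nabla^2 v_\ve$ exponentially small, so $\mathcal{L}_\ve v_\ve/v_\ve\to c(x,x/\ve^\alpha)$, which is bounded below. Inside the boundary layer $\phi\lesssim\ve$ a direct computation gives a leading contribution proportional to $-a^{ij}\partial_i\phi\partial_j\phi/\phi^2$ plus bounded terms; the singular part is strictly negative but of a definite structure that can be controlled by choosing $\phi$ well (for instance so that $a^{ij}\partial_i\phi\partial_j\phi\leq C\phi$ near $\partial\Omega$, which is achievable since $\phi$ is smooth and $\partial\Omega\in C^2$). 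The computation relies on the uniform ellipticity (\ref{ellipt}) and the $C^1$-bounds on $a^{ij},b^j,c$ to ensure all constants are independent of $\ve$.

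\medskip

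\textbf{Main obstacle.} The technical heart of the proof is the boundary-layer estimate of $\mathcal{L}_\ve v_\ve/v_\ve$. The first-order drift term $\ve b^j\partial_j v_\ve$ and the second-order term $\ve^2 a^{ij}\partial^2_{ij}v_\ve$ both produce $O(1)$ contributions on $\partial\Omega$ while $v_\ve$ vanishes there, so one must exploit cancellations that come from the specific form of $v_\ve=1-e^{-\phi/\ve}$; concretely, the $-1/\ve^2$ part of $\partial^2_{ij}v_\ve$ is multiplied by $\ve^2$ and produces a bounded, definite-sign contribution, matched against the $-1/\ve$ part of $\partial_j v_\ve$ multiplied by $\ve$. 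Once this is carried out the lower bound follows with a constant $\Lambda$ depending only on $\sup|b|$, $\sup|c|$, the ellipticity constant $m$, and the geometry of $\partial\Omega$ (via $\phi$), but not on $\ve$ or $\alpha$.
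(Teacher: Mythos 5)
The upper bound and the overall strategy for the lower bound (test a positive function vanishing on $\partial\Omega$ against a Barta-type inequality, which is the dual reformulation of the Fredholm/adjoint argument the paper actually uses) are sound. The problem is in the concrete test function you propose, and it is fatal.

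With $v_\ve = 1 - e^{-\phi/\ve}$ one computes
$$
\frac{\mathcal L_\ve v_\ve}{v_\ve}
= c(x,x/\ve^\alpha)
+\Bigl(\ve\, a^{ij}\partial^2_{ij}\phi - a^{ij}\partial_i\phi\,\partial_j\phi + b^j\partial_j\phi\Bigr)\,\frac{e^{-\phi/\ve}}{1-e^{-\phi/\ve}}.
$$
As $x\to\partial\Omega$ the factor $e^{-\phi/\ve}/(1-e^{-\phi/\ve})\sim\ve/\phi\to+\infty$, while the bracket converges to $-a^{ij}\partial_i\phi\,\partial_j\phi + b^j\partial_j\phi + O(\ve)$, which by the ellipticity hypothesis \eqref{ellipt} and your own assumption $|\nabla\phi|>0$ on $\partial\Omega$ is $\le -m|\nabla\phi|^2 + |b||\nabla\phi| + O(\ve)$ and hence strictly negative for $\ve$ small (and always negative if $b$ points outward). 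So $\inf_\Omega \mathcal L_\ve v_\ve/v_\ve=-\infty$, and no $\ve$-uniform $\Lambda$ comes out. Your suggested repair, to pick $\phi$ with $a^{ij}\partial_i\phi\,\partial_j\phi\le C\phi$ near $\partial\Omega$, is self-contradictory: ellipticity gives $a^{ij}\partial_i\phi\,\partial_j\phi\ge m|\nabla\phi|^2$, and since $\phi=0$ on $\partial\Omega$ the inequality would force $\nabla\phi=0$ on $\partial\Omega$, contradicting $|\nabla\phi|>0$ there. (If you instead allow $\nabla\phi=0$ on $\partial\Omega$, e.g.\ $\phi\sim d^2$, the $b^j\partial_j\phi\sim d$ term is still one order too large relative to $v_\ve\sim d^2/\ve$ in the region $d\ll\ve$, so the ratio still blows up.) The difficulty is that you cannot simultaneously have $v_\ve\to 0$ at $\partial\Omega$, $|\nabla\phi|>0$ at $\partial\Omega$, and a numerator that vanishes fast enough to cancel the denominator.

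The paper's choice $v_\ve = e^{\kappa W/\ve}-1$, with $W>0$ in $\Omega$, $W=0$ on $\partial\Omega$, $|\nabla W|>1$ near $\partial\Omega$, resolves exactly this: the sign of the exponent is flipped, so the dominant term in $\mathcal L_\ve v_\ve$ is $+\kappa^2 a^{ij}\partial_i W\,\partial_j W\,e^{\kappa W/\ve}$, which is large \emph{positive} near the boundary and beats the bounded drift contribution once $\kappa$ is chosen large (depending only on the coefficient bounds and $m$). Then $\mathcal L_\ve v_\ve-\lambda v_\ve>0$ for all $\lambda<-\Lambda$, and the lower bound follows by pairing with the positive adjoint eigenfunction and invoking the Fredholm alternative. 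So your plan is correct in outline, but the specific test function must blow up in the interior (not saturate at $1$), and the key sign structure you were trying to exploit is in fact the opposite one.
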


\begin{proof}
The proof of the upper bound follows by the maximum principle as
in Lemma \ref{estforLambdaper}.

To derive a lower bound for $\lambda_\ve$ we construct a function
$v_\ve$ and choose a number $\Lambda>0$ such that
$v_\ve=0$ on $\partial\Omega$, and
\begin{equation}
\label{testLowerB_eigenval}
\mathcal{L}_\ve v_\ve-\lambda v_\ve>0
\qquad  \text{in}\ \Omega
\end{equation}
for every $\lambda<-\Lambda$, $0<\ve<1$. There is a
function $W\in C^2(\overline{\Omega})$ satisfying the following
conditions, $W>0$ in $\Omega$ and  $W=0$ on $\partial \Omega$,
$|\nabla W|> 1$ in a neighborhood of $\partial \Omega$. Set
$v_\ve(x):=e^{\kappa W(x)/\ve}-1$, where $\kappa $ is a positive parameter to
be chosen  later. We assume that $-\Lambda\leq \min c(x,y)$ so that $\lambda<\min c(x,y)$. Then we
have
\begin{equation*}
\mathcal{L}_\ve v_\ve-\lambda v_\ve \geq \bigl(m \kappa^2-\kappa
(M_1+\ve M_2)+ (c(x,x/\ve^\alpha)-\lambda)\bigr) e^{\kappa W(x)/\ve}
-(c(x,x/\ve^\alpha)-\lambda)>0\quad \text{in}\ \Omega^\prime
\end{equation*}
when $\kappa>\kappa_1:= (M_1+M_2)/m$. Here $\Omega^\prime=\{x\in \Omega;\,
|\nabla W|\geq 1\}$, $M_1=\max
\bigl|b^{i}(x,y)\frac{\partial W}{\partial x_i}(x)\bigr|$, $M_2= \max
\bigl|a^{ij}(x,y)\frac{\partial^2W}{\partial x_i\partial x_j}(x)\bigr|$. On
the other hand $\delta:= \inf \{W(x); \, x\in
\Omega\setminus \Omega^\prime\}>0$ therefore $e^{\kappa W(x)/\ve}>2$ in $\Omega\setminus \Omega^\prime$, when
$\kappa>\kappa_2:=(\log 2)/\delta$. Assuming additionally that $\min c(x,y)-\lambda>2\kappa (M_1+M_2)$, we have
\begin{equation*}
\mathcal{L}_\ve v_\ve-\lambda v_\ve \geq (-\kappa
(M_1+\ve M_2)+(c(x,x/\ve^\alpha)-\lambda)) \exp(\kappa
\gamma/\ve)-(c(x,x/\ve^\alpha)-\lambda)>0\qquad \text{in}\
\Omega\setminus \Omega^\prime.
\end{equation*}
 Thus  setting
$\kappa:=\max\{\kappa_1,\kappa_2\}$ and $\Lambda:=2\kappa (M_1+ M_2)-\min
c(x,y)$ we get (\ref{testLowerB_eigenval}).

Now note that $\lambda_\ve$ is also the first eigenvalue (the
eigenvalue with the maximal real part) of the adjoint operator
$
\mathcal{L}^{*}_\ve u=\ve^2\frac{\partial^2}{\partial x_i\partial x_j}
(a^{ij}u) -\ve\frac{\partial}{\partial x_i} (b^i u)+cu,
$
and the corresponding eigenfunction $u^{*}_\ve$ can be chosen
positive in  $\Omega$.  Therefore, if $\lambda_\ve<-\Lambda$ then
$(\mathcal{L}_\ve v_\ve -\lambda_\ve v_\ve) u^{*}_\ve>0$ in $\Omega$ that
contradicts the Fredholm theorem. \end{proof}

The following two results show that, up to extracting a
subsequence, functions $W_\ve$ converge uniformly on compacts in
$\Omega$. For brevity introduce the notation
$$
 d(x)={\rm dist}(x,\partial \Omega).
$$

\begin{lem}
\label{LemLipsch} For every $\kappa>0$ there is a constant
$C_\kappa$,  independent of $\ve$, such that
\begin{equation}
\label{lipsch} |W_\ve(x)-W_\ve(z)|\leq C_\kappa (|x-z|+\ve) \qquad
\text{when}\ x,z\in\Omega, \ \min\{d(x),d(z)\} \geq
\kappa\ve.
\end{equation}
\end{lem}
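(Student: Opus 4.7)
The plan is to regard $u_\ve$ as a positive solution of the linear uniformly elliptic equation $\mathcal{L}_\ve u_\ve = \lambda_\ve u_\ve$ and to apply the classical Harnack inequality at the natural microscopic scale $x = x_0 + \ve y$. Concretely, fix $x_0$ with $d(x_0)\geq \kappa\ve$ and set $v(y) = u_\ve(x_0 + \ve y)$ on the ball $\{|y|\leq \kappa/2\}$ (so that $x_0 + \ve y$ stays at distance $\geq \kappa\ve/2$ from $\partial\Omega$). Then $v$ satisfies
\[
\tilde a^{ij}(y)\,\partial_i\partial_j v + \tilde b^{j}(y)\,\partial_j v + (\tilde c(y) - \lambda_\ve)\, v = 0,
\]
where $\tilde a^{ij}(y)=a^{ij}(x_0+\ve y,(x_0+\ve y)/\ve^\alpha)$, and similarly for $\tilde b^{j}$, $\tilde c$. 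These coefficients are uniformly bounded in $\ve$, in $x_0$ and in the microscopic oscillation; $(\tilde a^{ij})$ is uniformly elliptic via \eqref{ellipt}; and by Lemma~\ref{boundsforlambda}, $\lambda_\ve$ is uniformly bounded. Hence the Krylov--Safonov Harnack inequality, whose constant depends only on the ellipticity, the $L^\infty$-norms of the coefficients and the radius of the ball (and not on any smoothness of the coefficients in $y$), yields a constant $C_\kappa$, independent of $\ve$ and $x_0$, such that
\[
\sup_{|y|\leq \kappa/4} v \leq C_\kappa \inf_{|y|\leq \kappa/4} v.
\]
Applying $-\ve\log(\cdot)$, this gives the local oscillation bound $|W_\ve(x) - W_\ve(x')| \leq \ve\log C_\kappa$ whenever $|x-x_0|,|x'-x_0|\leq \kappa\ve/4$.

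The global estimate \eqref{lipsch} is then obtained by chaining. Given $x, z \in \Omega$ with $\min\{d(x), d(z)\}\geq \kappa\ve$, I would construct a chain $x = x_0, x_1, \ldots, x_N = z$ with $|x_i - x_{i+1}|\leq \kappa\ve/8$ and $d(x_i) \geq \kappa\ve/2$ for every $i$, so that the previous step can be applied centered at each $x_i$. Using $C^2$-regularity of $\partial \Omega$, for $\ve$ small enough any two points of $\{d\geq \kappa\ve\}$ can be joined by a curve lying in $\{d\geq \kappa\ve/2\}$ of length $L \leq C_\Omega(|x-z|+\ve)$, giving $N\leq C'_\kappa(|x-z|/\ve + 1)$. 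Summing the local oscillation estimate along the chain,
\[
|W_\ve(x) - W_\ve(z)| \leq \sum_{i=0}^{N-1} |W_\ve(x_i) - W_\ve(x_{i+1})| \leq N\,\ve\log C_\kappa \leq C''_\kappa(|x-z| + \ve),
\]
which is the desired bound (after renaming the constant).

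The main obstacle I anticipate is the geometric step: producing a path between $x$ and $z$ inside $\{d\geq \kappa\ve/2\}$ whose length is $O(|x-z|+\ve)$, with constant uniform in small $\ve$. This requires a quantitative form of the equivalence between the inner and Euclidean metrics on a slightly shrunk smooth domain, which follows from the existence of a uniform tubular neighborhood of $\partial\Omega$ on which the nearest-point projection is well-defined and Lipschitz; one deforms a straight segment (or polygonal path) that may intrude into the $\kappa\ve$-tube around $\partial\Omega$ by pushing it outward along the inward normal, adding at most $O(\ve)$ to its length. All the analytic ingredients (elliptic rescaling, uniformity of the Harnack constant under rapidly oscillating bounded coefficients, and the bound on $\lambda_\ve$ from Lemma~\ref{boundsforlambda}) are routine applications of the standard tools; it is only the geometric path construction near $\partial\Omega$ that requires some care.
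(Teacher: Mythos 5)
Your proof follows essentially the same route as the paper's: rescale by $\ve$ about a point at distance $\geq\kappa\ve$ from $\partial\Omega$, apply the Krylov--Safonov Harnack inequality (whose constant is insensitive to the rapid oscillation of the coefficients, depending only on ellipticity and $L^\infty$-bounds plus the bound on $\lambda_\ve$ from Lemma~\ref{boundsforlambda}), take $-\ve\log$ to get the local estimate $|W_\ve(x)-W_\ve(x')|\leq\ve\log C_\kappa$, and then chain along $O(|x-z|/\ve+1)$ overlapping balls using the $C^2$-regularity of $\partial\Omega$ to keep the chain uniformly away from the boundary. The only cosmetic difference is that you shrink the distance threshold from $\kappa\ve$ to $\kappa\ve/2$ in the chaining step while the paper keeps the chain in $\{d\geq\kappa\ve\}$; both work, up to relabelling $\kappa$.
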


\begin{proof} Let $x_0\in \Omega$, $d(x_0)\geq \kappa \ve$.
Changing the variables $x\mapsto x_0+\ve y$ in operator (\ref{operator}) we find that $v_\ve=u_\ve(x_0+\ve y)$ satisfies the equation
$$
a^{ij}_\ve(y)
\frac{\partial^2 v_\ve}{\partial y_i\partial y_j}+b^j_\ve(y)\frac{\partial
v_\ve}{\partial y_j} +(c_\ve(y)-\lambda_\ve) v_\ve=0\qquad
\text{when}\ |y|<\kappa.
$$
Note that
$a^{ij}_\ve$, $b^{j}_\ve$ and $c_\ve$ are uniformly bounded and
$a^{ij}_\ve\zeta^i\zeta^j\geq m |\zeta|^2>0$ $\forall \zeta\not=0$.
Therefore by Harnack's inequality (see, e.g., \cite{KS}) we have
$v_\ve(y)\leq
C^\prime_{\kappa} v_\ve(y^\prime)$ when $|y|,|y^\prime|<\kappa/2$, where
$C^\prime_\kappa>0$ is independent of $x_0$ and $\ve$. Thus
\begin{equation}
\label{harnack} |W_\ve(x)-W_\ve(z)|\leq(\log C^\prime_{\kappa})\, \ve, \qquad
\text{if} \ |x-z|<\kappa \ve/2 \quad \text{and}\
\min\{d(x), d(z)\} \geq
\kappa\ve.
\end{equation}
Since $\partial\Omega$ is $C^2$-smooth and $\Omega$ is connected,
every two points $x,\,z\in\Omega$ such that $d(x)\geq \kappa\ve$, $d(z)\geq \kappa\ve$ can be connected by a
chain of segments $[x_i,x_{i+1}]$, $i=\overline {1,n_\ve}$ with
$|x_{i+1}-x_i|<\kappa \ve/2$, ${\rm dist}(x_i,\partial\Omega)\geq
\kappa\ve$  and $n_\ve\leq C^{\prime\prime}_\kappa (|x-z|/\ve+1)$.
(We assume that $\ve$ is sufficiently small.) Then iterating
(\ref{harnack}) we obtain (\ref{lipsch}).
\end{proof}


\begin{lem} \label{boundarymax}
Let $\phi(x)\in C^2(\overline{\Omega})$ then
every maximum point $x_\ve$ of the function
$u_\ve(x) e^{\phi(x)/\ve}$ satisfies
$
d(x_\ve)\geq \kappa \ve
$
with some $\kappa>0$ independent of $\ve$.
\end{lem}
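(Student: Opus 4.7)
The plan is to contrast an upper bound on $|\nabla u_\ve(x_\ve)|/u_\ve(x_\ve)$ forced by the critical point condition with a Hopf-type lower bound near $\partial\Omega$ obtained by rescaling. Since $u_\ve>0$ vanishes on $\partial\Omega$ and $u_\ve e^{\phi/\ve}>0$ in $\Omega$, the maximum point $x_\ve$ lies in the interior of $\Omega$, and vanishing of the gradient of $u_\ve e^{\phi/\ve}$ at $x_\ve$ yields
$$
\frac{|\nabla u_\ve(x_\ve)|}{u_\ve(x_\ve)} = \frac{|\nabla \phi(x_\ve)|}{\ve} \leq \frac{\|\nabla\phi\|_{L^\infty(\Omega)}}{\ve}.
$$

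To produce the competing lower bound I would zoom in at scale $\ve$ around $x_\ve$: set $v_\ve(y) = u_\ve(x_\ve+\ve y)$ on the dilated domain $D_\ve = (\Omega-x_\ve)/\ve$. A direct calculation shows that $v_\ve$ solves a uniformly elliptic linear equation with uniformly bounded coefficients (the zero-order term being controlled via Lemma~\ref{boundsforlambda}), vanishes on $\partial D_\ve$, is positive in $D_\ve$, and satisfies $\|v_\ve\|_{L^\infty}\leq 1$. In the rescaled geometry $\partial D_\ve$ is $C^2$ with second fundamental form of order $\ve$ (essentially flat as $\ve\to 0$), and the interior-ball radius is at least $R/\ve\to\infty$, so the set-up converges to a uniform half-space situation. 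The classical Hopf boundary-point lemma together with $C^{1,\alpha}$-type boundary regularity, with constants uniform in $\ve$, then gives
$$
\frac{|\nabla v_\ve(y)|}{v_\ve(y)} \geq \frac{c_0}{\mathrm{dist}(y, \partial D_\ve)}
$$
for $y$ in a fixed neighborhood of $\partial D_\ve$, with $c_0>0$ independent of $\ve$. Evaluating at $y=0$, using $\mathrm{dist}(0, \partial D_\ve) = d(x_\ve)/\ve$ and $|\nabla v_\ve(0)|/v_\ve(0) = \ve|\nabla u_\ve(x_\ve)|/u_\ve(x_\ve)$, this translates into $|\nabla u_\ve(x_\ve)|/u_\ve(x_\ve) \geq c_0/d(x_\ve)$, provided $d(x_\ve)/\ve$ is smaller than some fixed threshold $\delta_0$.

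Combining the two bounds in the regime $d(x_\ve)/\ve<\delta_0$ gives $c_0/d(x_\ve) \leq \|\nabla\phi\|_{L^\infty}/\ve$, hence $d(x_\ve)\geq c_0\ve/\|\nabla\phi\|_{L^\infty} =: \kappa \ve$; in the opposite regime the conclusion is immediate after shrinking $\kappa$ if necessary. The main obstacle is the uniformity of the Hopf-type estimate in $\ve$. For $\alpha\leq 1$ the coefficients of the rescaled equation are uniformly Lipschitz in $y$ (since the oscillation scale $\ve^{1-\alpha}$ is bounded), so standard Schauder $C^{1,\alpha}$ boundary estimates apply with $\ve$-independent constants and propagate the Hopf lower bound on $|\nabla v_\ve|$ from $\partial D_\ve$ to nearby interior points. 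For $\alpha>1$ the coefficients oscillate on the vanishing scale $\ve^{\alpha-1}$ in $y$, Schauder theory is not directly available, and the required inequality must instead be extracted from a direct barrier construction in a fixed-size interior ball at the nearest boundary point, relying only on uniform ellipticity and the $L^\infty$ bounds on the coefficients (which, together with the uniform interior-ball condition of $D_\ve$, is sufficient for the barrier argument).
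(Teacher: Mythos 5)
Your route is genuinely different from the paper's. The paper proves the lemma by a direct maximum-principle argument: one multiplies $u_\ve e^{\phi/\ve}$ by a weight $e^{-\rho_\ve/\ve}$ with $\rho_\ve(x)=2d(x)-\beta d^2(x)/\ve$ chosen so that the resulting zeroth-order coefficient is strictly negative in the boundary strip $\{d(x)<\ve/\beta\}$; the conclusion then follows from the strong maximum principle with no rescaling, no derivative estimates, and no boundary regularity theory, uniformly for every $\alpha>0$. Your starting identity $\nabla u_\ve(x_\ve)=-\ve^{-1}u_\ve(x_\ve)\nabla\phi(x_\ve)$ is correct and the idea of contrasting it with a boundary blow-up of the logarithmic gradient is reasonable, but the proposal contains two genuine gaps. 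The key inequality $|\nabla v_\ve(y)|/v_\ve(y)\geq c_0/{\rm dist}(y,\partial D_\ve)$ with $\ve$-uniform $c_0$ does \emph{not} follow from Hopf's lemma and $C^{1,\beta}$ boundary regularity alone. Hopf bounds $\partial v_\ve/\partial\nu$ at a boundary point $y_0$ in terms of $v_\ve$ at an interior reference point at unit distance from $\partial D_\ve$, while Schauder controls the $C^{1,\beta}$ norm of $v_\ve$ near $y_0$ by $\sup v_\ve$ there; neither quantity is comparable to $v_\ve(0)$, which sits at the possibly very small distance $\rho=d(x_\ve)/\ve$ from $\partial D_\ve$. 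A Harnack chain from $0$ to the reference point has about $\log(1/\rho)$ links, so the comparison constant degrades exactly as $\rho\to 0$, which is the regime of interest. What is needed to make the constants match is a Carleson (boundary Harnack) estimate, $\sup\{v_\ve(y):y\in B_{1/2}(y_0)\cap D_\ve\}\leq C\,v_\ve(A_{1/2})$ with $A_{1/2}$ a corkscrew point at distance comparable to $1/2$ from both $y_0$ and $\partial D_\ve$; with that in hand Hopf, Harnack and Schauder do combine to give the ratio bound, but this ingredient is precisely the missing piece and must be stated and justified.

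The second gap is the case $\alpha>1$. You correctly observe that the rescaled coefficients oscillate at scale $\ve^{\alpha-1}\to 0$, so $C^{1,\beta}$ boundary Schauder theory is unavailable, and you propose to replace it by a ``direct barrier construction.'' But a barrier comparison yields two-sided bounds on the \emph{values} of $v_\ve$ near $\partial D_\ve$, not a lower bound on $|\nabla v_\ve(0)|$, and the whole strategy hinges on such a gradient lower bound; under merely bounded measurable coefficients one has only $C^\beta$ regularity up to the boundary, which does not suffice. As sketched, the argument does not close for $\alpha>1$. This is exactly the regime in which the paper's weight-function argument is markedly more robust: since it never differentiates $u_\ve$ near $\partial\Omega$, the maximum principle alone suffices for every $\alpha>0$ simultaneously.
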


\begin{proof}
Consider the function $v_\ve(x)=u_\ve e^{(\phi(x)-\rho_\ve(x))/\ve}$, where
$\rho_\ve=2d(x)-\beta d^2(x)/\ve$ and $\beta$ is a positive
parameter to be chosen later. Since
$\mathcal{L}_\ve u_\ve-\lambda_\ve u_\ve=0$ we have
$$
0={\mathcal L}_\ve (e^{(-\phi(x)+\rho_\ve(x))/\ve}v_\ve)
-\lambda_\ve e^{(-\phi(x)+\rho_\ve(x))/\ve}v_\ve
=e^{(-\phi(x)+\rho_\ve(x))/\ve}\tilde {\mathcal L}_\ve v_\ve,
$$
the operator $\tilde {\mathcal L}_\ve$ being given by
\begin{equation*}
\tilde {\mathcal L}_\ve v=\ve^2  a^{ij}(x,x/\ve^\alpha)\frac{\partial^2 v}
{\partial x_i\partial x_j} +
\ve\bigl(b_j(x,x/\ve^\alpha)-2a^{ij}\frac{\partial}{\partial x_i} (\phi-\rho_\ve)\bigr)
\frac{\partial v}{\partial x_j}
+\tilde c_\ve(x)  v,
\end{equation*}
with
\begin{equation*}
\tilde
c_\ve(x)=c(x,x/\ve^\alpha)-\lambda_\ve+
H(\nabla(\phi-\rho_\ve),x,x/\ve^\alpha)
-\ve a^{ij}(x,x/\ve^\alpha)\frac{\partial^2}{\partial x_i\partial x_j}
\bigl(\phi(x)-\rho_\ve(x)\bigr).
\end{equation*}
The coefficient $\tilde c_\ve(x)$ depends on the parameter $\beta$
through the derivatives of $\rho_\ve$, and we  can choose $\beta$ so
large  that $\tilde c_\ve<0\ \text{in}\
\Omega\setminus\Omega_{\ve/\beta}$ for every $\ve>0$, where
$\Omega_{\ve/\beta}=\{x\in\Omega;\,d(x)>\ve/\beta\}$. Indeed,
$|\nabla \rho_\ve| \leq 4$ when  $d(x)\leq\ve/\beta $ while
$\lambda_\ve\geq -\Lambda$ (cf. Lemma \ref{boundsforlambda}) and
\begin{equation*}
a^{ij}(x,x/\ve^\alpha)\frac{\partial^2 \rho_\ve} {\partial
x_i\partial x_j} \leq -\frac{2}{\ve}\beta
a^{ij}(x,x/\ve^\alpha)\frac {\partial d(x)}{\partial x_i} \frac
{\partial d(x)}{\partial x_j}+C\leq -2m \beta/\ve +C \qquad \text{in}\
\Omega\setminus\Omega_{\ve/\beta},
\end{equation*}
where $C$ is independent of $\ve$ and $\beta$. Thus there is
$\beta>0$ such that $\tilde c_\ve(x)<0$ in
$\Omega\setminus\Omega_{\ve/\beta}$. With this choice of
$\beta$ we get by the maximum principle applied to the equation
$\tilde {\mathcal L}_\ve v_\ve=0$, that $ v_\ve \leq \max\{v_\ve(x); \
x\in\partial\Omega_{\ve/\beta}\}$ in
$\Omega\setminus\Omega_{\ve/\beta}$, or $u_\ve e^{\phi(x)/\ve}
\leq e^{(\rho_\ve(x)-\ve/\beta)/\ve} \max\{u_\ve(x)
e^{\phi(x)/\ve}; \ x\in\partial \Omega_{\ve/\beta}\}$ when
$d(x)<\ve/\beta$. Since $\rho_\ve(x)<\ve/\beta$ when
$d(x)<\ve/\beta$, setting $\kappa=1/\beta$ we get the desired
bound.
\end{proof}

\begin{cor} \label{boundedness}
There is a constant $\kappa>0$ such that every maximum point
$x_\ve$ of $u_\ve$ satisfies $d(x_\ve)\geq \kappa\ve$.
\end{cor}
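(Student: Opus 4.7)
The plan is to deduce the corollary as an immediate specialization of Lemma~\ref{boundarymax}. Since the lemma is stated for an arbitrary $\phi\in C^2(\overline{\Omega})$, the natural choice is simply $\phi\equiv 0$. With this choice, $u_\ve(x)e^{\phi(x)/\ve}=u_\ve(x)$, so the set of maximum points of $u_\ve(x)e^{\phi(x)/\ve}$ coincides with the set of maximum points of $u_\ve$ itself.

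Applying Lemma~\ref{boundarymax} with $\phi\equiv 0$ then yields the existence of a constant $\kappa>0$, independent of $\ve$, such that every maximum point $x_\ve$ of $u_\ve$ satisfies $d(x_\ve)\geq \kappa\ve$. Note that the constant $\kappa$ produced in the proof of Lemma~\ref{boundarymax} is $\kappa=1/\beta$, where $\beta$ was selected so that the zero-order coefficient $\tilde c_\ve$ of the conjugated operator is negative in the thin boundary layer; with $\phi\equiv 0$ the terms coming from $\nabla\phi$ and $\partial^2\phi$ simply drop out, so the construction applies verbatim and gives a valid $\kappa$.

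There is no substantive obstacle here: the corollary is a direct instantiation, and no additional estimate is required. The only thing to verify is that the proof of Lemma~\ref{boundarymax} does not secretly use $\phi\not\equiv 0$ anywhere (it does not, since $\phi$ enters only as additive contributions through its derivatives). Hence the corollary follows immediately.
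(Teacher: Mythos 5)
Your proof is correct and is exactly what the paper does: Corollary~\ref{boundedness} is obtained by applying Lemma~\ref{boundarymax} with $\phi\equiv 0$. The extra sanity check you include (that the proof of the lemma does not implicitly require $\phi\not\equiv 0$) is sound but not needed.
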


\begin{proof}
We simply apply Lemma \ref{boundarymax} with $\phi\equiv 0$.
\end{proof}

Lemma \ref{LemLipsch}  and Corollary \ref{boundedness} imply that
we can extract a subsequence of functions $W_\ve$ converging
uniformly on every compact in $\Omega$ to a limit $W(x)\in
C(\overline{\Omega})$. Moreover, by Lemma \ref{LemLipsch} the
function $W$ is Lipschitz continuous and $\max\{
|W_\ve(x)-W(x)|;\, x\in\Omega,\, d(x)\geq\kappa\ve\}\to 0$ for
every $\kappa>0$.

\begin{cor} \label{boundforminima}
Let $W_\ve$ converge (along a subsequence)  to a function $W$
uniformly on every compact in $\Omega$. Then, for every $\phi\in
C^2(\overline{\Omega})$, we have
\begin{equation*}
\lim_{\ve\to 0}\min\{W_\ve(x)-\phi(x); x\in\overline{\Omega}\}
=\min\{W(x)-\phi(x); x\in\overline{\Omega}\}.
\end{equation*}
\end{cor}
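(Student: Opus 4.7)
The plan is to combine Lemma~\ref{boundarymax}, which says that minimizers of $W_\ve-\phi$ stay at distance $\geq \kappa\ve$ from $\partial\Omega$, with the equicontinuity provided by Lemma~\ref{LemLipsch}. The key preliminary observation is that $e^{-(W_\ve-\phi)/\ve}=u_\ve e^{\phi/\ve}$, so minima of $W_\ve-\phi$ correspond exactly to maxima of $u_\ve e^{\phi/\ve}$. In particular, since $u_\ve=0$ on $\partial\Omega$ (so $W_\ve=+\infty$ there), the infimum of $W_\ve-\phi$ over $\overline\Omega$ is attained at an interior point $x_\ve\in\Omega$, and Lemma~\ref{boundarymax} gives $d(x_\ve)\geq\kappa\ve$ for some $\kappa>0$ that is independent of $\ve$.

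Passing to a subsequence we may assume $x_\ve\to x_0\in\overline\Omega$. To handle the case $x_0\in\partial\Omega$, I would fix $\delta>0$, pick an interior point $z\in\Omega$ with $|x_0-z|<\delta$, and apply Lemma~\ref{LemLipsch} with the same $\kappa$ (valid because both $d(x_\ve)\geq\kappa\ve$ and $d(z)\geq\kappa\ve$ for small $\ve$) to get
$$
|W_\ve(x_\ve)-W_\ve(z)|\leq C_\kappa(|x_\ve-z|+\ve).
$$
Since $W_\ve(z)\to W(z)$ at the fixed interior point $z$, and $W\in C(\overline\Omega)$ is Lipschitz (as noted right after Corollary~\ref{boundedness}), one obtains $\limsup_{\ve\to0}|W_\ve(x_\ve)-W(x_0)|\leq(2C_\kappa+L_W)\delta$. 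Letting $\delta\to 0$ yields $W_\ve(x_\ve)\to W(x_0)$, and since $\phi\in C^2(\overline\Omega)$ also $\phi(x_\ve)\to\phi(x_0)$. Therefore
$$
\lim_{\ve\to0}\min_{\overline\Omega}(W_\ve-\phi)=W(x_0)-\phi(x_0)\geq\min_{\overline\Omega}(W-\phi).
$$

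For the reverse inequality, take any $y\in\Omega$; uniform convergence on compacts gives $W_\ve(y)\to W(y)$, hence $\min_{\overline\Omega}(W_\ve-\phi)\leq W_\ve(y)-\phi(y)\to W(y)-\phi(y)$. Since $W-\phi\in C(\overline\Omega)$ and $\Omega$ is dense in $\overline\Omega$, optimizing over $y\in\Omega$ yields $\limsup_{\ve\to 0}\min_{\overline\Omega}(W_\ve-\phi)\leq\min_{\overline\Omega}(W-\phi)$, which closes the argument.

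The main obstacle is the fact that convergence $W_\ve\to W$ only holds on compact subsets of $\Omega$, while the minimum is taken over the closed domain; a priori the minimizer $x_\ve$ could drift toward $\partial\Omega$. The decisive ingredient that resolves this is Lemma~\ref{boundarymax} applied with the very same test function $\phi$, which prevents precisely this pathology by quantitatively keeping $x_\ve$ away from the boundary. Everything else is a standard equicontinuity/continuity argument.
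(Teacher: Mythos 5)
Your proof is correct and follows essentially the same route as the paper's: both arguments use Lemma~\ref{boundarymax} to pin the minimizer $x_\ve$ of $W_\ve-\phi$ at distance $\geq\kappa\ve$ from $\partial\Omega$ and then exploit the convergence of $W_\ve$ to $W$ to conclude. The only difference is cosmetic: the paper invokes the pre-established fact $\max\{|W_\ve(x)-W(x)|\,;\,d(x)\geq\kappa\ve\}\to0$ directly, whereas you re-derive the needed convergence $W_\ve(x_\ve)\to W(x_0)$ by hand from Lemma~\ref{LemLipsch} and the Lipschitz continuity of $W$, which is exactly how that stronger convergence statement is obtained in the paper in the first place.
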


\begin{proof}  We know that $\max\{ |W_\ve(x)-W(x)|;\, x\in\Omega,\, d(x)\geq \kappa\ve\}\to 0$ as
$\ve\to 0$ for every $\kappa>0$. On the other hand, by Lemma \ref{boundarymax},
$\min\{W_\ve(x)-\phi(x); x\in\overline{\Omega}\}$ is attained at a
point located on the distance at least $\kappa_0\ve$ from the
boundary $\partial\Omega$ for some $\kappa_0>0$ independent of
$\ve$. This implies that
$
\liminf_{\ve\to 0}\min\{W_\ve(x)-\phi(x); x\in\overline{\Omega}\}
\geq\min\{W(x)-\phi(x); x\in\overline{\Omega}\}.
$
The opposite inequality
$
\limsup_{\ve\to 0}\min\{W_\ve(x)-\phi(x); x\in\overline{\Omega}\}
\leq\min\{W(x)-\phi(x); x\in\overline{\Omega}\}
$
is an easy consequence of the uniform convergence of $W_\ve$ to
$W$ on compacts in $\Omega$.
\end{proof}

\section{Vanishing viscosity limit}

This section is devoted to the proof of Theorem \ref{mainth}.
According to the results of the previous section
we can assume that, up to a
subsequence,
\begin{equation}
\label{convlambda} \lambda_\ve\to \lambda
\end{equation}
and
\begin{equation}
\label{convW} \max\{|W_\ve(x) -W(x)|;\, x\in\Omega,\, d(x)\geq
{\kappa\ve}\}\to 0 \ \forall \kappa>0,
\end{equation}
where $W$ is some Lipschitz continuous function. We are going to
show that the pair $\lambda$ and $W$ is a solution of problem
(\ref{homprob}), (\ref{hombcond}).

We  follow
the same scheme for $\alpha>1$, $\alpha=1$ and $\alpha<1$.

For an arbitrary function $\phi\in C^2(\overline{\Omega})$, let
$W-\phi$ attain strict minimum at a point $x_0\in
\overline\Omega$. We construct test functions of the form
$$
\phi_\ve=\tilde\phi_\ve-|x-x_0|^2,
$$
where $\tilde\phi_\ve\to\phi$ uniformly in $\overline{\Omega}$. Moreover, we will chose
functions $\tilde\phi_\ve$ with uniformly bounded first derivatives and such that
\begin{equation}
\label{testconv}
-\ve a^{ij}(x_\ve,x_\ve/\ve^\alpha)
\frac{\partial^2 \tilde\phi_\ve}{\partial x_i\partial x_j}(x_\ve)+
H\bigl(\nabla \tilde\phi_\ve(x_\ve), x_\ve,x_\ve/\ve^\alpha\bigr)\to
\overline H(\nabla \phi(x_0),x_0)
\end{equation}
for every sequence of points $x_\ve\in\Omega$ such that
$x_\ve\to x_0$. The existence of such functions $\tilde\phi_\ve$ will be justified later on.

Let us now show that  minima points of functions $W_\ve-\phi_\ve$ converge to $x_0$.
Indeed,
$$|\min (W_\ve-\phi_\ve)-\min (W_\ve-\phi+|x-x_0|^2)|\to 0$$
since $\phi_\ve(x)\to\phi(x)-|x-x_0|^2$ uniformly in
$\overline{\Omega}$.  If $x_\ve$ is a minimum point of
$W_\ve-\phi_\ve$ then in view of Corollary \ref{boundforminima} we
have
$$
\min (W_\ve-\phi_\ve)=W_\ve(x_\ve)-\phi(x_\ve)
+|x_\ve-x_0|^2+\bar{o}(1)\geq \min
(W-\phi)+|x_\ve-x_0|^2+\bar{o}(1).
$$
On the other hand, choosing a sequence of points $\tilde
x_\ve\in\Omega$ such that $d(\tilde x_\ve)=|\tilde x_\ve-x_0|=\ve$  we have
$$
\min (W_\ve-\phi_\ve)\leq W_\ve(\tilde
x_\ve)-\phi_\ve(\tilde x_\ve)\to W(x_0)-\phi(x_0)=\min
(W_0-\phi),$$
therefore $x_\ve\to x_0$.

Now, if $x_\ve$ is a minimum point of $(W_\ve-\phi_\ve)$ then
clearly $x_\ve\in \Omega$, hence $\nabla \phi_\ve(x_\ve)=\nabla
W_\ve(x_\ve)$ and $a^{ij}(x_\ve,x_\ve/\ve^\alpha) \frac{\partial^2
W_\ve}{\partial x_i\partial x_j}(x_\ve) \geq
a^{ij}(x_\ve,x_\ve/\ve^\alpha) \frac{\partial^2\phi_\ve}{\partial
x_i\partial x_j}(x_\ve). $ We use (\ref{eqforW}) to get
$$
H\bigl(\nabla W_\ve(x_\ve),x_\ve,x_\ve/\ve^\alpha\bigr)-\lambda_\ve\geq \ve a^{ij}(x_\ve,x_\ve/\ve^\alpha)
\frac{\partial^2\phi_\ve}{\partial x_i\partial x_j}(x_\ve),
$$
or
\begin{equation}
\label{prelimit}
-\ve a^{ij}(x_\ve,x_\ve/\ve^\alpha)
\frac{\partial^2\tilde\phi_\ve}{\partial x_i\partial x_j}(x_\ve)+
H\bigl(\nabla \tilde \phi_\ve(x_\ve),x_\ve,x_\ve/\ve^\alpha\bigr)-\lambda_\ve+\underline{O}(\ve+|x_\ve-x_0|)\geq 0.
\end{equation}
Next we use
(\ref{testconv}) to pass to the limit in (\ref{prelimit}) as
$\ve\to 0$ that leads to the desired inequality
\begin{equation}
\label{final1} \overline H(\nabla \phi(x_0), x_0)\geq \lambda.
\end{equation}

If  $W-\phi$ attains strict maximum at a point $x_0\in\Omega$,  we
argue similarly. We construct test functions of the form
$\phi_\ve=\tilde\phi_\ve$ with $\tilde\phi_\ve$ satisfying
(\ref{testconv}) and such that $\tilde\phi_\ve\to\phi$ uniformly
in $\overline{\Omega}$. Note that this time
$x_0\not\in\partial\Omega$ and therefore we can always chose a
sequence of local maxima points $x_\ve$ of $W_\ve-\phi_\ve$ converging to $x_0$. Then
using the same arguments as in the proof of (\ref{final1}) we derive
\begin{equation}
\label{final2} \overline H(\nabla \phi(x_0), x_0)\geq \lambda.
\end{equation}
Thus $W(x)$ is a viscosity solution of (\ref{homprob}),
(\ref{hombcond}).

It remains to construct functions $\tilde\phi_\ve$ that have
uniformly bounded first derivatives, satisfy (\ref{testconv}) and
converge to $\phi$ uniformly in $\overline{\Omega}$.


\medskip\noindent
{\bf Case  $\alpha>1$.} We set
$$
\tilde\phi_\ve(x)=\phi(x)+\ve^{2\alpha-1}\theta(x/\ve^\alpha),
$$
where $\theta(y)$ is a $Y$-periodic solution of
\begin{equation}
\label{test1}
-a^{ij}(x_0,y)\frac{\partial^2\theta }{\partial y_i\partial y_j} =
\overline H(\nabla \phi(x_0),x_0)
-H(\nabla\phi(x_0),x_0,y).
\end{equation}
Thanks to (\ref{effH1}) such a solution does
exist, for (\ref{effH1}) is nothing but the solvability condition for (\ref{test1}).
Moreover, since the coefficients and the right hand side in
(\ref{test1}) are Lipschitz continuous, $\theta\in C^{2,1}$
(see, e.g., \cite{GT}). Therefore if $x_\ve\to x_0$ as $\ve\to 0$, then
we have
\begin{multline*}
-\ve a^{ij}(x_\ve,x_\ve/\ve^\alpha)\frac{\partial^2 \tilde\phi_\ve(x_\ve)}{\partial x_i \partial x_j}+
H\bigl(\nabla\tilde \phi_\ve(x_\ve),x_\ve,x_\ve/\ve^\alpha\bigr)=
-a^{ij}(x_\ve,x_\ve/\ve^\alpha)\frac{\partial^2 \theta(x_\ve/\ve^\alpha)}
{\partial y_i\partial y_j}+\underline{O}(\ve)
\\+H\bigl(\nabla\phi(x_\ve)+\underline{O}(\ve^{\alpha-1}),x_\ve,x_\ve/\ve^\alpha\bigr)=
-a^{ij}(x_0,x_\ve/\ve^\alpha)\frac{\partial^2 \theta(x_\ve/\ve^\alpha)}{\partial y_i\partial y_j}\\
+H\bigr(\nabla\phi(x_0),x_0,x_\ve/\ve^\alpha\bigr)+\underline{O}(|x-x_\ve|+\ve+\ve^{\alpha-1})
=\overline H(\nabla \phi(x_0),x_0)+\overline{o}(1).
\end{multline*}

\medskip\noindent
{\bf Case $\alpha=1$.} Set $\tilde\phi_\ve(x)=\phi(x)+\ve\theta(x/\ve)$,
where $\theta(y)=-\log\vartheta(y)$ and $\vartheta(y)$ is the
unique (up to multiplication by a positive constant) $Y-$periodic positive solution of
\begin{equation}
\label{test2}
a^{ij}(x_0,y)\frac{\partial^2\vartheta}{\partial y_i\partial y_j}+
\hat b^{j}(y)\frac{\partial \vartheta}{\partial y_j}+
\hat c(y))\vartheta=\overline H(p,x)\vartheta,
\end{equation}
where $p=\nabla \phi(x_0)$, $\hat b^j(y)=b^{j}(x_0,y)-2a^{ij}(x_0,y) p_i$,
$\hat c(y)=a^{ij}(x_0,y) p_ip_j-
b^{j}(x_0,y)p_j+c(x_0,y)$. By a standard elliptic regularity result
we have $\theta\in C^{2,1}$
(see \cite{GT}).

Simple calculations
show that $\theta(y)$ satisfies
\begin{equation}
-a^{ij}(x_0,y)\frac{\partial^2 \theta}{\partial y_i\partial y_j}+
H\bigl(\nabla\phi(x_0)+\nabla\theta,x_0,y\bigr)=\overline H(\nabla\phi(x_0),x_0).
\end{equation}
Then we easily conclude that
\begin{multline*}
-\ve a^{ij}(x_\ve,x_\ve/\ve)\frac{\partial^2 \tilde\phi_\ve(x_\ve)}{\partial x_i \partial x_j}+
H\bigr(\nabla\tilde \phi_\ve(x_\ve),x_\ve,x_\ve/\ve\bigr)=
-a^{ij}(x_\ve,x_\ve/\ve)\frac{\partial^2 \theta(x_\ve/\ve)}{\partial y_i\partial y_j}+\underline{O}(\ve)
\\+H\bigl(\nabla\phi(x_\ve)+\nabla \theta(x_\ve/\ve),x_\ve,x_\ve/\ve\bigr)
=-a^{ij}(x_0,x_\ve/\ve)\frac{\partial^2 \theta(x_\ve/\ve)}{\partial y_i\partial y_j}\\
+H\bigl(\nabla\phi(x_0)+\nabla
\theta(x_\ve/\ve),x_0,x_\ve/\ve\bigr)+\underline{O}(|x_\ve-x_0|+\ve)=
\overline H(\nabla \phi(x_0),x_0)+\overline{o}(1),
\end{multline*}
as soon as $x_\ve\to x_0$ when $\ve\to 0$.

\medskip\noindent
{\bf Case $\alpha<1$.} Set
$\tilde\phi_\ve(x)=\phi(x)+\ve^\alpha\theta_\ve(x/\ve^\alpha)$
where $\theta_\ve$ is a $Y$-periodic solution of the equation
\begin{equation}
\label{test3}
-\ve^{1-\alpha} a^{ij}(x_0,y)\frac{ \partial^2\theta_\ve}{\partial y_i\partial y_j}+
H\bigl(p+ \nabla \theta_\ve(y), x_0,y\bigr)=\overline H_\ve(p,x_0) \
\text{with}\  p=\nabla\phi(x_0) .
\end{equation}
Such a solution exists if $\overline H_\ve(p,x_0)$ coincides the
first eigenvalue $\mu_\ve$
(eigenvalue with the maximal real part)  of the spectral problem
$$
\begin{array}{l}
\displaystyle\ve^{2(1-\alpha)}a^{ij}(x_0,y)\frac{ \partial^2\vartheta_\ve}{\partial y_i\partial y_j}
+\ve^{1-\alpha}\hat b^j(y)\frac{ \partial\vartheta_\ve}{\partial y_j}
+\hat c(y)\vartheta_\ve=
\mu_\ve\vartheta_\ve,
\\
\vartheta_\ve\quad \text{is}\ Y\text{-periodic},
\end{array}
$$
where $\hat b^j$, $\hat c$ are as in (\ref{test2}).
According to the Krein-Rutman theorem $\mu_\ve$ is
a real and simple eigenvalue and the corresponding
eigenfunction  $\vartheta_\ve$ can be
chosen positive. Then a solution of  (\ref{test3}) is given by
$\theta_\ve=-\ve^{1-\alpha}\log \vartheta_\ve$. We invoke now the results obtained in
Section \ref{periodS},
\begin{equation}
\label{MuConv} \overline H_\ve(p,x_0)\to \overline H(p,x_0)=\overline
H(\nabla\phi(x_0),x_0)
\end{equation}
(where the limit $\overline H(p,x_0)$ is described in
(\ref{effH2})),
\begin{equation}
\label{MuBound}
\|\partial^2 \vartheta_\ve/\partial y_i\partial y_j\|_{L^\infty}\leq
C/\ve^{1-\alpha}
\end{equation}
This allows us to show (\ref{testconv}) similarly to other cases considered above,
\begin{multline*}
-\ve a^{ij}(x_\ve,x_\ve/\ve^\alpha)\frac{\partial^2 \tilde\phi_\ve(x_\ve)}{\partial x_i \partial x_j}+
H\bigl(\nabla\tilde \phi_\ve(x_\ve),x_\ve,x_\ve/\ve^\alpha\bigr)=
-\ve^{1-\alpha}a^{ij}(x_\ve,x_\ve/\ve^\alpha)\frac{\partial^2 \theta_\ve(x_\ve/\ve^\alpha)}{\partial y_i\partial y_j}+\underline{O}(\ve)
\\+H\bigl(\nabla\phi(x_\ve)+\nabla \theta_\ve(x_\ve/\ve^\alpha),x_\ve,x_\ve/\ve^\alpha\bigr)
=-\ve^{1-\alpha}a^{ij}(x_0,x_\ve/\ve^\alpha)\frac{\partial^2 \theta_\ve(x_\ve/\ve^\alpha)}{\partial y_i\partial y_j}\\
+H\bigl(\nabla\phi(x_0)+\nabla \theta_\ve(x_\ve/\ve^\alpha),x_0,x_\ve/\ve^\alpha\bigr)+\underline{O}(|x-x_\ve|+\ve)=
\overline H(\nabla \phi(x_0),x_0)+\overline{o}(1).
\end{multline*}
Theorem \ref{mainth} is completely proved. \hfill $\square$

\section{Lower bound for eigenvalues via blow up analysis}
\label{SecLowerbound}

From now on we consider in details the special case when
$\alpha=1$ and $c(x,y)=0$. Under the assumption
(\ref{fixedpoints}) the eigenvalues $\lambda_\ve$ of
(\ref{operator}) converge to zero as $\ve\to 0$. We are interested
in the more precise (up to the order $\ve$) asymptotics for
$\lambda_\ve$. 
We resolve this
question by local analysis near points $\xi$ of the Aubry set
$\mathcal{A}_{\overline H}$ of the effective Hamiltonian.

Fix a point $\xi \in\mathcal{A}_{\overline H}$. 

Applying the maximum principle we see that $\lambda_\ve<0$. On the
other hand, it is well known 
that the eigenvalue $\lambda_\ve$ is
given by
\begin{equation}
\lambda_\ve=\inf\Bigl\{\sup_{x\in\Omega}\frac{\mathcal{L}_\ve \phi}{\phi}; \ \phi\in C^2(\Omega)\cap C(\overline\Omega),\ \phi>0 \
\text{in}\ \Omega,\,\phi=0\ \text{on}\
\partial \Omega\Bigr\}. \label{lambdaformula}
\end{equation}
Therefore, for every given $\delta>0$, we have $$ \lambda_\ve\geq
\ve \tilde\sigma_\ve,
$$
where $\tilde\sigma_\ve<0$ is the first eigenvalue of the equation
$\mathcal{L}_\ve v_\ve-\delta|x-\xi|^2 v_\ve=\ve\tilde \sigma_\ve v_\ve$
or
\begin{equation}
\ve a^{ij}(x,x/\ve)\frac{\partial^2 v_\ve} {\partial x_i\partial
x_j} + b^{j}(x,x/\ve)\frac{\partial v_\ve}{\partial
x_j}-\frac{\delta|x-\xi|^2}{\ve}v_\ve =\tilde\sigma_\ve v_\ve \qquad
\text{in}\ \Omega\label{Singpertproblemtest}
\end{equation}
with the Dirichlet condition
$v_\ve=0$  on $\partial\Omega$.
We assume hereafter that the first eigenfunction $v_\ve$ is normalized by $v_\ve(\xi)=1$.

Let us transform (\ref{Singpertproblemtest})
to a form more convenient for the analysis.
First, after
changing variables $z=(x-\xi)/\sqrt{\ve}$ and setting
$w_\ve(z)=v_\ve(\xi+\sqrt{\ve}z)$
equation (\ref{Singpertproblemtest}) becomes
\begin{equation}
a^{ij}_{\xi,\xi/\ve}\bigl(\sqrt{\ve} z,z/\sqrt{\ve}\bigr)\frac{\partial^2 w_\ve} {\partial
z_i z_j}+ \frac{b^{j}_{\xi,\xi/\ve}\bigl(\sqrt{\ve} z,z/\sqrt{\ve}\bigr)}
{\sqrt{\ve}}\frac{\partial w_\ve}{\partial z_j}-\delta |z|^2w_\ve
=\tilde\sigma_\ve w_\ve \quad \text{in}\
(\Omega-\xi)/\sqrt{\ve}.\label{Singpertproblemtest1}
\end{equation}
Here (and below) the subscript "$\xi,\xi/\ve$" denotes
the shift (translation) by $\xi$ in $x$ and by $\xi/\ve$ in $y$,
i.e., for instance, $a^{ij}_{\xi,\xi/\ve}(x,y)=a^{ij}_{\xi,\xi/\ve}(\xi+x,\xi/\ve+y)$.
Next multiply (\ref{Singpertproblemtest1}) by
$\theta^\ast_{\xi,\xi/\ve}\bigl(\sqrt{\ve} z,z/\sqrt{\ve}\bigr)$,
$\theta^\ast(x,y)$ being given by (\ref{effdualproblresonans}), to
find after simple rearrengements
\begin{multline}
\frac{\partial}{\partial
z_i}\Bigl(\theta^\ast_{\xi,\xi/\ve}\bigl(\sqrt{\ve}z,z/\sqrt{\ve}\bigr)a^{ij}_{\xi,\xi/\ve}\bigl(\sqrt{\ve}z,z/\sqrt{\ve}\bigr)\frac{\partial
w_\ve} {\partial
z_j}\Bigr)+\frac{S^j_{\xi,\xi/\ve}\bigl(\sqrt{\ve}z,z/\sqrt{\ve}\bigr)}{\sqrt{\ve}}\frac{\partial
w_\ve}{\partial z_j} \\+ \Bigl(\frac{\overline{b}^j(\sqrt{\ve}z+\xi)}{\sqrt{\ve}}+\sqrt{\ve} h^j_\ve(z)\Bigr)\frac{\partial
w_\ve}{\partial z_j}=(\tilde\sigma_\ve+\delta |z|^2) \theta^\ast_{\xi,\xi/\ve}\bigl(\sqrt{\ve}z,z/\sqrt{\ve}\bigr)w_\ve,\label{firsttwoterms}
\end{multline}
where $S^j_{\xi,\xi/\ve}(x,y)$ is obtained by shifts (as described above) from
$$
S^j(x,y)=b^j(x,y)\theta^\ast(x,y)-\frac{\partial}{\partial y_i}
\Bigl(a^{ij}(x,y)\theta^\ast(x,y)\Bigr)-\overline {b}^j(x),
$$
and $h_\ve^j$ are uniformly bounded functions.
Since $\theta^\ast$ solves (\ref{effdualproblresonans}),  the $Y$-periodic vector field
$S(x,y)=(S^1(x,y),\dots,S^N(x,y))$ is divergence free, for every
fixed $x$, and (due to the definition of $\overline b$)
this field has zero mean over the period. Therefore we can find the representation
$$
S^j(x,y)=\frac{\partial}{\partial y_i}T^{ij}(x,y)\ \text{with $Y$-periodic in $y$ skew-symmetric}\ T^{ij}(x,y)\   (T^{ij}=-T^{ji}).
$$
Moreover, functions $T^{ij}$ are (can be chosen) continuous with bounded
derivatives $\partial T^{ij}/\partial x_k$. We can thus  rewrite (\ref{firsttwoterms}) as
\begin{equation}
\frac{\partial}{\partial
z_i}\Bigl(q^{ij}_{\xi,\xi/\ve}\bigl(\sqrt{\ve}z,z/\sqrt{\ve}\bigr)
\frac{\partial w_\ve}
{\partial z_j}\Bigr)+\Bigl(\frac{\overline{b}^j(\sqrt{\ve}z+\xi)}{\sqrt{\ve}}+\sqrt{\ve}\tilde h^j_\ve(z)\Bigr)\frac{\partial
w_\ve}{\partial z_j}=(\tilde\sigma_\ve+\delta |z|^2) \theta^\ast_{\xi,\xi/\ve}\bigl(\sqrt{\ve}z,z/\sqrt{\ve}\bigr)w_\ve,
\label{Singpertproblemtest3}
\end{equation}
where $q^{ij}_{\xi,\xi/\ve}(x,y)=q^{ij}(\xi +x,\xi/\ve+y)$,
$q^{ij}(x,y)=\theta^\ast(x,y)a^{ij}(x,y)+T^{ij}(x,y)$, and $\tilde h^j_\ve$ are uniformly bounded functions.
Note that on every fixed
compact we have
$$
\frac{\overline{b}^j(\sqrt{\ve}z+\xi)}{\sqrt{\ve}}\to z_i \frac{\partial \overline{b}^j}{\partial x_i}(\xi)
$$
uniformly in $z$ as $\ve\to 0$.

\begin{lem}
\label{uniformpreciseboundeigenvalue} If $\overline b(\xi)=0$ for
some $\xi\in\overline\Omega$ then the first eigenvalue
$\lambda_\ve$ of the operator (\ref{operator-redu}) satisfies the
bound $-\Lambda \ve\leq\lambda_\ve< 0$
with some $\Lambda>0$ independent of $\ve$.
\end{lem}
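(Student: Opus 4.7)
\medskip

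\noindent\textbf{Plan.} The proof splits naturally into the upper bound $\lambda_\ve<0$ (which is immediate) and the delicate lower bound $\lambda_\ve\ge-\Lambda\ve$. For the upper bound, since $c\equiv 0$, evaluating $\mathcal L_\ve u_\ve=\lambda_\ve u_\ve$ at an interior maximum of $u_\ve$ gives $\lambda_\ve\le 0$; the case $\lambda_\ve=0$ is ruled out because $\mathcal L_\ve u=0$ with zero Dirichlet data forces $u\equiv 0$ by the strong maximum principle.

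For the lower bound, the reduction $\lambda_\ve\ge\ve\tilde\sigma_\ve$ already holds from the discussion preceding the lemma: testing $\phi=u_\ve$ in the variational principle (\ref{lambdaformula}) for the first eigenvalue of $\mathcal L_\ve-\delta|x-\xi|^2$ gives $\ve\tilde\sigma_\ve\le\sup_{x\in\Omega}(\lambda_\ve-\delta|x-\xi|^2)=\lambda_\ve$ (the supremum being attained as $x\to\xi$). It therefore remains to show that $\tilde\sigma_\ve$ is bounded below uniformly in $\ve$, and for this I would analyse the rescaled equation (\ref{Singpertproblemtest3}) for $w_\ve(z)=v_\ve(\xi+\sqrt\ve\,z)$ on the expanding domain $K_\ve=(\Omega-\xi)/\sqrt\ve$. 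Since $\overline b(\xi)=0$, Taylor expansion yields $\overline b^j(\sqrt\ve\,z+\xi)/\sqrt\ve\to z_iB^{ij}(\xi)$ locally uniformly, and the fast-oscillating divergence-form coefficient $q^{ij}_{\xi,\xi/\ve}(\sqrt\ve\,z,z/\sqrt\ve)$ homogenizes to the effective matrix $Q^{ij}$ of (\ref{orst_uhl}); formally, in the limit $\ve\to 0$ equation (\ref{Singpertproblemtest3}) becomes the Ornstein-Uhlenbeck eigenvalue problem
\[
Q^{ij}\frac{\partial^2 w}{\partial z_i\partial z_j}+z_i B^{ij}(\xi)\frac{\partial w}{\partial z_j}-\delta|z|^2 w=\tilde\sigma\, w\quad\text{on }\mathbb R^N,
\]
whose principal eigenvalue $\tilde\sigma$ is finite (and explicitly computable in terms of the eigenvalues of $B$ and $Q$), whence $\tilde\sigma_\ve\to\tilde\sigma$ and the required uniform bound follows.

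To make this rigorous, the main technical tool is an Agmon-type weighted a~priori estimate. The plan is to multiply (\ref{Singpertproblemtest3}) by $w_\ve e^{2\rho|z|^2}$ with $\rho>0$ small, integrate over $K_\ve$, use the skew-symmetry of $T^{ij}$ to discard the antisymmetric part of $q^{ij}$ in the resulting Dirichlet form, and absorb the quadratic-in-$z$ contributions produced by the nearly-linear drift $\overline b(\sqrt\ve\,z+\xi)/\sqrt\ve$ into the confining term $-\delta|z|^2 w_\ve^2$. For $\rho$ chosen small in terms of $\|B\|$, $\delta$ and the ellipticity constant, this yields a uniform bound $\int_{K_\ve}e^{2\rho|z|^2}(w_\ve^2+|\nabla w_\ve|^2)\,{\rm d}z\le C$; combined with the normalization $w_\ve(0)=1$ and local elliptic regularity, this gives $L^2_{loc}$ and $H^1_{loc}$ compactness of $\{w_\ve\}$. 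A two-scale convergence argument then passes to the limit in the divergence form of (\ref{Singpertproblemtest3}) and identifies the limit as the Ornstein-Uhlenbeck problem displayed above, producing the uniform lower bound $\tilde\sigma_\ve\ge-\Lambda$.

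The main obstacle will be the weighted estimate together with the two-scale homogenization: the drift $\overline b/\sqrt\ve$ grows linearly in $z$ at infinity and must be strictly dominated by the confining term $\delta|z|^2$ after multiplication by the Gaussian weight, which forces a careful tuning of $\rho$ against $\|B\|$ and the ellipticity constant; moreover, the homogenization must be carried out on the unbounded $\ve$-dependent domain $K_\ve$ rather than on a fixed bounded cell, requiring truncation arguments compatible with the Gaussian weight.
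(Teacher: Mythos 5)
Your upper bound $\lambda_\ve<0$ and the reduction $\lambda_\ve\geq\ve\tilde\sigma_\ve$ are fine, but your route to the uniform lower bound on $\tilde\sigma_\ve$ is genuinely different from the paper's and contains a gap that I do not see how to close. The paper rewrites (\ref{Singpertproblemtest3}) as $\mathcal{L}^{(aux)}_\ve w_\ve=\tilde\sigma_\ve\theta^\ast_{\xi,\xi/\ve}w_\ve$, where crucially the operator $\mathcal{L}^{(aux)}_\ve$ (the divergence-form part plus the drift plus the $-\delta|z|^2\theta^\ast$ zero-order term) has coefficients bounded uniformly in $\ve$ on the \emph{fixed} ball $B_2$ and is independent of $\tilde\sigma_\ve$. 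Passing to the parabolic evolution $\partial_t\tilde w_\ve-\mathcal{L}^{(aux)}_\ve\tilde w_\ve=0$ on $(0,\infty)\times B_2$, comparing $\tilde w_\ve(t,z)$ with $e^{\tilde\sigma_\ve(\min\theta^\ast)t}w_\ve(z)$ via the maximum principle, and using Aronson's Harnack-type estimate for parabolic equations with bounded measurable coefficients then yields $\tilde\sigma_\ve\geq-\Lambda$ directly; the case $\xi\in\partial\Omega$ is handled by shifting $\xi$ slightly inward.

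Your plan has two interlocking problems. First, the weighted energy identity obtained by multiplying (\ref{Singpertproblemtest3}) by $w_\ve e^{2\rho|z|^2}$ and integrating over $K_\ve$ gives, after absorbing the cross terms into the confining $-\delta|z|^2$ term, an inequality of the shape
\begin{equation*}
\tilde\sigma_\ve\int_{K_\ve}\theta^\ast w_\ve^2 e^{2\rho|z|^2}\,{\rm d}z\;\le\;C\int_{K_\ve}w_\ve^2 e^{2\rho|z|^2}\,{\rm d}z,
\end{equation*}
whose sign points the wrong way: since $\tilde\sigma_\ve<0$ the left side is nonpositive and the right side is nonnegative, so the inequality carries no information on how negative $\tilde\sigma_\ve$ can be. For the same reason it does not yield the claimed uniform weighted $H^1$ bound on $w_\ve$; if $\tilde\sigma_\ve\to-\infty$ nothing in the identity prevents the weighted norm from degenerating. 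Bounding a principal Dirichlet eigenvalue from below is not an energy-estimate statement; it requires a Harnack-type or Faber--Krahn-type ingredient (this is exactly what Aronson's estimate supplies in the paper).

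Second, the argument is circular as planned. You propose to obtain the bound on $\tilde\sigma_\ve$ as an \emph{output} of the two-scale limit, by identifying the limit problem as the Ornstein--Uhlenbeck eigenvalue equation. But extracting a convergent subsequence $\tilde\sigma_\ve\to\tilde\sigma$ and passing to the limit in (\ref{Singpertproblemtest3}) already requires $\tilde\sigma_\ve$ to be bounded: the zero-order coefficient $(\tilde\sigma_\ve+\delta|z|^2)\theta^\ast$ appears in the equation, and the local $C^{0,\beta}$ and $H^1_{\rm loc}$ compactness needed for your homogenization argument depends on a uniform bound for it. The paper is explicit on this point in the paragraph following the lemma: the a~priori bound on $\tilde\sigma_\ve$ is what \emph{allows} the subsequent compactness argument and identification of the limit equation, not a consequence of it. You would need to establish the lower bound on $\tilde\sigma_\ve$ before any passage to the limit, not after.
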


\begin{proof}
We know that $\lambda_\ve<0$ and in the proof
of the lower bound we assume first $\xi\in\Omega$. Then
(\ref{Singpertproblemtest3}) holds in $B_2=\{z;\, |z|<2\}$ for
sufficiently small $\ve$. Let us write
(\ref{Singpertproblemtest3})
in the operator form $\mathcal{L}^{(aux)}_\ve w_\ve
=\tilde\sigma_\ve\theta^\ast_{\xi,\xi/\ve}\bigl(\sqrt{\ve}z,z/\sqrt{\ve}\bigr)w_\ve$ and consider
the parabolic equation for the operator $\mathcal{L}^{(aux)}_\ve$
\begin{equation*}
\frac{\partial \tilde w_\ve}{\partial t}-\mathcal{L}^{(aux)}_\ve \tilde w_\ve=0 \qquad  \text{in }\ (0,+\infty)\times
B_2,
\end{equation*}
subject to the initial condition $\tilde w_\ve(0,z)=w_\ve(z)$ and the boundary condition $\tilde w_\ve(t,z)=0$ on
$(0,+\infty)\times\partial\Omega$. The solution $\tilde w_\ve$ of this problem has the pointwise bound
$\tilde w_\ve(t,z)\leq \exp\bigl(\tilde\sigma_\ve(\min \theta^\ast)t\bigr)\, w_\ve(z)$. This follows by the maximum
principle applied to
$$
\Bigl(\frac{\partial }{\partial t}-\mathcal{L}^{(aux)}_\ve\Bigr)
\Bigl(\exp\bigl(\tilde\sigma_\ve(\min \theta^\ast)t\bigr)\, w_\ve(z)-\tilde w_\ve(t,z)\Bigr)=\tilde \sigma_\ve
\Bigl(\min \theta^*-\theta^\ast_{\xi,\xi/\ve}\bigl(\sqrt{\ve}z,z/\sqrt{\ve}\bigr)\Bigr)w_\ve(z)\geq 0.
$$
On the other hand, since the coefficients of the operator $\mathcal{L}_\ve$ are uniformly bounded
and the uniform ellipticity bound
$\theta^\ast_{\xi,\xi/\ve}
\bigl(\sqrt{\ve}z,z/\sqrt{\ve}\bigr)a^{ij}_{\xi,\xi/\ve}\bigl(\sqrt{\ve}z,z/\sqrt{\ve}\bigr) \zeta^i\zeta^j
\geq (\min \theta^\ast) m \, |\zeta|^2$ holds, by the Aronson estimate (see \cite{A}) we have
$$ \min\{\tilde
w_\ve(1,z);\ z\in B_1\}\geq M \min\{\tilde w_\ve(0,z);\ z\in B_1\}
$$
with $M>0$ independent of $\ve$, where $B_1$ is the unit ball $B_1=\{z;\, |z|<1\}$.
This yields
$$
e^{(\min \theta^\ast)\tilde\sigma_\ve}\min_{B_1} w_\ve\geq \min\{\tilde w_\ve(1,z);\ z\in
B_1\}\geq M \min\{\tilde w_\ve(0,z);\ z\in B_1\}=M\min_{B_1}w_\ve,
$$
i.e.  $\tilde\sigma_\ve\geq \log M /\min \theta^\ast=:-\Lambda$. Thus $\tilde\sigma_\ve\geq -\Lambda$ and
$\lambda_\ve\geq -\Lambda\ve$.

Finally, in the case $\xi\in \partial\Omega$ we can repeat the
above argument taking $\xi_\ve\in \Omega$ in place of $\xi$, with
$|\xi_\ve-\xi|={\rm dist}(\xi_\ve,\partial\Omega)=2\sqrt\ve$.\end{proof}


In the proof of Lemma \ref{uniformpreciseboundeigenvalue} we have got
a uniform lower bound for $\tilde\sigma_\ve$ which (in conjunction with the obvious inequality  $\tilde\sigma_\ve<0$)
allows one to obtain
uniform bounds for the norm of $w_\ve$ in $C^{0,\beta}(K)$ (with $\beta>0$ depending only on
bounds for coefficients in (\ref{Singpertproblemtest3})) and
$H^1(K)$, for every compact $K$ (see, e.g., \cite{GT}).
Thus, up to extracting a subsequence, $w_\ve\to w$ in $C_{\rm loc}({\mathbb R}^N)$
and $\tilde \sigma_\ve\to\tilde
\sigma$. Moreover, using well established homogenization technique
based on the ${\rm div}$-${\rm curl}$ Lemma we get that $w$ solves
\begin{equation}
Q^{ij}\frac{\partial^2 w} {\partial z_i\partial z_j}+
 B^{ij}z_i\frac{\partial
w} {\partial z_j}-\delta|z|^2w=\tilde\sigma w\qquad \text{in}\
\mathbb{R}^N, \label{homogequationrescaled}
\end{equation}
where $Q^{ij}=Q^{ji}$ are some constant coefficients satisfying
the ellipticity condition (actually, one can check that $Q^{ij}=\frac{1}{2}\frac{\partial^2 \overline{H}}{\partial p_i\partial p_j}(0,\xi)$)
and
$$
B^{ij}(=B^{ij}(\xi))=\frac{\partial \overline b^j}{\partial x_i}(\xi).
$$
Since we assumed the normalization $w_\ve(0)=1$,
we see that $w(z)$ is a nontrivial solution of
(\ref{homogequationrescaled}). Moreover, if $z_\ve$ is a maximum
point of $w_\ve(z)$ we get from (\ref{Singpertproblemtest1})
$|z_\ve|^2\leq -\tilde\sigma_\ve/\delta$ therefore, thanks to
Lemma \ref{uniformpreciseboundeigenvalue}, $|z_\ve|\leq C$. It
follows that $w(z)$ is a bounded positive solution of
(\ref{homogequationrescaled}).
A solution of (\ref{homogequationrescaled}) can be constructed in
the form $w(z)=e^{-\Gamma^{ij}_\delta z_i z_j}$ with a symmetric
positive definite matrix
$(\Gamma^{ij}_\delta)_{i,j=\overline{1,N}}$.
Indeed, consider the following matrix Riccati equation
$$
4\Gamma_\delta Q\Gamma_\delta-B\Gamma_\delta-\Gamma_\delta
B^\ast-\delta I=0,
$$
where $I$ denotes the unit matrix. It is well-known that there
exists a unique positive definite solution $\Gamma_\delta$ (since
$\delta>0$ and $Q$ is positive definite, see \cite{LR}). Then
$w(z)=e^{-\Gamma^{ij}_\delta z_i z_j}$ is a positive bounded
solution of (\ref{homogequationrescaled}) corresponding to the
eigenvalue $\tilde \sigma=-2{\rm tr}(Q\Gamma_\delta)$. Next
observe that by means of the gauge transformation $\tilde
w(z)=e^{-r |z|^2}w(z)$ ($r>0$) equation
(\ref{homogequationrescaled}) is reduced to
\begin{equation*}
Q^{ij}\frac{\partial^2 \tilde w} {\partial z_i\partial z_j}+
 (B^{ij}+4r Q^{ij})z_i\frac{\partial
\tilde w} {\partial z_j}+(4r^2 Q^{ij}z_i z_j+2r\, {\rm tr} Q+2r
B^{ij}z_i z_j-\delta|z|^2)\tilde w=\tilde\sigma \tilde w\qquad
\text{in}\ \mathbb{R}^N.
\end{equation*}
For sufficiently small $r>0$ we have $((4r^2 Q^{ij}z_i z_j+2r
\,{\rm tr} Q+2r B^{ij}z_i z_j-\delta|z|^2)\to -\infty$ and $\tilde
w(z)\to 0$ as $|z|\to \infty$. Then, according to
\cite{PS}, the eigenvalue $\tilde\sigma$ corresponding to such a
positive eigenfunction $\tilde w$ (vanishing as $|z|\to\infty$) is
unique. Thus $\tilde \sigma=-2{\rm tr}(Q\Gamma_\delta)$, and
summarizing the above analysis we have $\liminf_{\ve\to
0}\lambda_\ve/\ve\geq -2{\rm tr}(Q\Gamma_\delta)$. Finally note
that $\Gamma_\delta$ converges to the maximal positive
semi-definite solution of the Bernoulli equation (see, e.g., \cite{LR})
\begin{equation}
4\Gamma Q\Gamma-B\Gamma-\Gamma B^\ast=0, \label{Bernoulli}
\end{equation}
as $\delta\to+0$. Calculations presented in Appendix \ref{C}
show that
$-2{\rm tr}(Q\Gamma)= \sigma(\xi)$ with $\sigma(\xi)$ being the
sum of negative real parts of the eigenvalues of $-B(\xi)$. Thus,
after maximizing in $\xi\in\mathcal{A}_{\overline H}$ we get
\begin{equation}
\label{optimallowerbound}
 \liminf_{\ve\to 0}\lambda_\ve/\ve\geq \overline \sigma=\max \{\sigma(\xi);\, \xi\in \mathcal{A}_{\overline H}\} .
\end{equation}

\section{Upper bound for eigenvalues and selection of the additive eigenfunction}
\label{SecUpperbound}

In this section we derive an upper bound for eigenvalues which
completes the proof of the asymptotic expansion formula
(\ref{selection}). Similarly to the previous section we make use
of the blow up analysis near points of the Aubry set. We consider
here only special  (so-called {\it significant}) points of the Aubry
set, where we can control the asymptotic behavior of rescaled
eigenfunctions at infinity. We will show that only these special
points matter to the leading term of eigenvalues and
eigenfunctions.


Due to Theorem \ref{mainth}, up to extracting a subsequence, the
functions $W_\ve=-\ve\log u_\ve$ converge uniformly on compacts to
a viscosity solution $W$ of $\overline H(\nabla W(x),x)=0$ in $\Omega$,
$\overline H(\nabla W(x),x)\geq 0$ on $\partial \Omega$. It follows that $W$
has the representation $W(x)=\min\{d_{\overline H}(x,\xi)+W(\xi);\,
\xi\in\mathcal{A}_{\overline H}\}$.

We will say that point $\xi\in \mathcal{A}_{\overline H}$ is {\it
significant} if
$$
W(x)=d_{\overline H}(x,\xi)+W(\xi) \qquad \text{in a neighborhood of}\ \xi.
$$
Otherwise we call $\xi$ {\it negligible}. For every negligible point
$\xi\in \mathcal{A}_{\overline H}$ there are sequences $x^k\to\xi$
and $\xi^k\in\mathcal{A}_{\overline H}\setminus\{\xi\}$ such that
$d_{\overline H}(x^k,\xi)+W(\xi)>d_{\overline
H}(x^k,\xi^k)+W(\xi^k)$. Passing to the limit (possibly along a
subsequence) and using the continuity of the distance function we
get $d_{\overline H}(\xi,\xi^\prime)=W(\xi)-W(\xi^\prime)$ for
some $\xi^\prime\in\mathcal{A}_{\overline H}$,
$\xi^\prime\not=\xi$ (we always have $d_{\overline
H}(\xi,\xi^\prime)\geq W(\xi)-W(\xi^\prime)$). Now let us
introduce the (partial) order relation $\preceq$ on
$\mathcal{A}_{\overline H}$ by setting
\begin{equation}
\xi^\prime\preceq \xi\ \Longleftrightarrow\  d_{\overline
H}(\xi,\xi^\prime)=W(\xi)-W(\xi^\prime). \label{orderrelation}
\end{equation}
This relation is clearly reflexive, its transitivity is a consequence of the triangle
inequality $ d_{\overline
H}(\xi,\xi^{\prime\prime})
\leq d_{\overline
H}(\xi,\xi^\prime)+d_{\overline
H}(\xi^\prime,\xi^{\prime\prime})$ while the
antisymmetry follows from the inequality $S_{\overline
H}(\xi,\xi^\prime)>0$ held for all $\xi,\xi^\prime\in
\mathcal{A}_{\overline H}$ with $\xi\not=\xi^\prime$.
Then we see that every minimal element $\xi\in \mathcal{A}_{\overline
H}$ is a significant  point. Since $\mathcal{A}_{\overline H}$ is finite
there exists a minimal element, i.e. there is at least one significant
point $\xi\in \mathcal{A}_{\overline H}$.


Let us fix a significant point $\xi\in\mathcal{A}_{\overline H}$. From
now on we assume that $u_\ve$ is normalized by $u_\ve(\xi)=1$,
unless otherwise is specified; the $W$ will also refer to the limit
of scaled logarithmic transformations of $u_\ve$ normalized in
this way. Thanks to the upper and lower bounds for the eigenvalue
$\lambda_\ve$ we have $\lambda_\ve/\ve\to \sigma$ as $\ve\to 0$
along a subsequence. Then we argue exactly as in the proof of the
lower bound for $\lambda_\ve$. We consider rescaled eigenfunctions
$w_\ve(z)=u_\ve(\xi+\sqrt{\ve} z)$ that are solutions of
\begin{equation*}
\frac{\partial}{\partial
z_i}\Bigl(q^{ij}_{\xi,\frac{\xi}{\ve}}\bigl(\sqrt{\ve}z,z/\sqrt{\ve}\bigr)
\frac{\partial w_\ve}
{\partial z_j}\Bigr)+\Bigl(\frac{\overline{b}^j(\sqrt{\ve}z+\xi)}{\sqrt{\ve}}+\sqrt{\ve}\tilde h^j_\ve(z)\Bigr)\frac{\partial
w_\ve}{\partial z_j}=\frac{\lambda_\ve}{\ve} \theta^\ast_{\xi,\frac{\xi}{\ve}}\bigl(\sqrt{\ve}z,z/\sqrt{\ve}\bigr)w_\ve
\quad
\text{in}\ \frac{\Omega-\xi}{\sqrt{\ve}}.
\end{equation*}
Up to extracting a further subsequence, they converge in $C(K)$ and weakly in
$H^1(K)$  (for every compact $K$) to a positive solution of
\begin{equation}
Q^{ij}\frac{\partial^2 w} {\partial z_i\partial z_j}+
 B^{ij}z_i\frac{\partial
w} {\partial z_j}=\sigma w\qquad \text{in}\ \mathbb{R}^N.
\label{homogequationrescaled1}
\end{equation}
Eigenvalue problem (\ref{homogequationrescaled1}) possesses,
in general, many solutions even in the class of positive
eigenfunctions $w(z)$. We will uniquely identify $\sigma$ studying
the asymptotic behavior of $w(z)$ as $|z|\to \infty$.
More precisely, we will show that 
\begin{equation} \label{specialclass} w(z)e^{\mu|\Pi_s^\ast
z|^2-\nu |\Pi_u^\ast z|^2} \ \text{is bounded on} \ \mathbb{R}^N\
\text{for some} \ \mu>0\ \text{and every}\ \nu>0,
\end{equation}
where
$\Pi_s$ and $\Pi_u$ denote spectral projectors on the invariant
subspaces of the matrix $B$ corresponding to the eigenvalues with
positive and negative real parts (stable and unstable subspaces of
the system $\dot z_i=-B^{ij}z_j$). This allows to use the
following uniqueness result.


\begin{lem}\label{uniquness} Let $w(z)$ be a positive solution of
(\ref{homogequationrescaled1}) satisfying (\ref{specialclass}).
Then $w(z)=Ce^{-\Gamma^{ij}z_i z_j}$ with $C>0$, where $\Gamma$ is
the maximal positive semi-definite solution of (\ref{Bernoulli}).
Moreover, we have $\sigma=-2{\rm tr}(\Gamma Q)$.
\end{lem}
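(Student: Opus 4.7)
The plan is to exhibit $v(z):=e^{-\Gamma^{ij}z_iz_j}$ as an explicit positive eigenfunction of (\ref{homogequationrescaled1}) with eigenvalue $\sigma_0:=-2\,\mathrm{tr}(Q\Gamma)$, and then to show that any positive solution $w$ satisfying (\ref{specialclass}) must equal $Cv$ for some $C>0$.

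\emph{Construction of $v$.} Setting $\phi(z):=-\Gamma^{ij}z_iz_j$ one has $\nabla\phi=-2\Gamma z$ and $\nabla^2\phi=-2\Gamma$, so a direct calculation gives
\begin{equation*}
\frac{Lv}{v}=\mathrm{tr}(Q\nabla^2\phi)+Q^{ij}\partial_i\phi\,\partial_j\phi+B^{ij}z_i\partial_j\phi=-2\,\mathrm{tr}(Q\Gamma)+z^T\bigl(4\Gamma Q\Gamma-B\Gamma-\Gamma B^T\bigr)z,
\end{equation*}
and the quadratic form vanishes identically by (\ref{Bernoulli}). Hence $v$ solves (\ref{homogequationrescaled1}) with $\sigma_0=-2\,\mathrm{tr}(Q\Gamma)$; this also yields the identity $\sigma=-2\,\mathrm{tr}(\Gamma Q)$ once $w=Cv$ is established.

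\emph{Reduction to a Liouville statement.} Substituting $w=vu$ into (\ref{homogequationrescaled1}) and using $Lv=\sigma_0 v$ produces the equation
\begin{equation*}
Q^{ij}\partial_{ij}u+\bigl((B^T-4Q\Gamma)z\bigr)^{j}\partial_j u=(\sigma-\sigma_0)\,u\quad\text{in }\mathbb{R}^N.
\end{equation*}
The algebraic core of the argument is the intertwining identity $\Gamma\,\tilde B=-B\,\Gamma$ with $\tilde B:=B^T-4Q\Gamma$, obtained by direct rearrangement of (\ref{Bernoulli}). Together with the maximality of $\Gamma$ (which, as is standard in the LQR literature, see \cite{LR}, forces $\mathrm{range}(\Gamma)$ to coincide with the generalized eigenspace of $B$ associated with eigenvalues of positive real part, and $\ker\Gamma$ with the generalized eigenspace of $B^T$ associated with eigenvalues of negative real part) this intertwining implies that all eigenvalues of $\tilde B$ have negative real parts: on $\ker\Gamma$ one has $\tilde B=B^T|_{\ker\Gamma}$, while on $\mathrm{range}(\Gamma)$ the identity $\Gamma\tilde B=-B\Gamma$ identifies eigenvalues of $\tilde B$ with the negatives of eigenvalues of $B|_{\mathrm{range}(\Gamma)}$. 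Thus the equation for $u$ is an Ornstein--Uhlenbeck equation with stable drift.

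\emph{Growth control and Pinchover--Semenov.} Combining (\ref{specialclass}) with the explicit form of $v$ and the linear-algebraic structure just described, one obtains a sub-Gaussian growth bound for $u$ in every direction of $\mathbb{R}^N$: the rapid Gaussian decay of $w$ on the stable subspace of $-B^T$ compensates the growth of $v^{-1}$ on $\mathrm{range}(\Gamma)$, while on $\ker\Gamma$ the quadratic form $\langle\Gamma z,z\rangle$ vanishes and $w$ grows slower than any Gaussian. Applying the secondary gauge $\tilde u(z):=e^{-r|z|^2}u(z)$ for $r>0$ sufficiently small, exactly as in the proof of Lemma \ref{uniformpreciseboundeigenvalue}, transforms the equation for $u$ into an elliptic equation whose zeroth-order coefficient tends to $-\infty$ at infinity and whose solution $\tilde u$ is a positive bounded function vanishing at infinity. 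The uniqueness result of \cite{PS} (already used in Section \ref{SecLowerbound}) then forces $\sigma-\sigma_0=0$ and determines $u$ up to a positive multiplicative constant, hence $w=Cv$ and $\sigma=\sigma_0=-2\,\mathrm{tr}(\Gamma Q)$.

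The main obstacle is the quantitative bookkeeping in the last step: verifying that the asymmetric decay and sub-Gaussian growth rates built into (\ref{specialclass}) are precisely strong enough --- given the quadratic form $\langle\Gamma z,z\rangle$ produced by the \emph{maximal} solution of (\ref{Bernoulli}) --- so that after division by $v$ and a mild Gaussian regauging the resulting $\tilde u$ falls into the class for which the Pinchover--Semenov uniqueness applies. The condition (\ref{specialclass}) has evidently been engineered with this very matching in mind.
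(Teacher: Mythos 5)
Your opening steps are sound and parallel the paper: the verification that $v(z)=e^{-\Gamma z\cdot z}$ solves \eqref{homogequationrescaled1} with eigenvalue $\sigma_0=-2\,\mathrm{tr}(Q\Gamma)$, and the intertwining identity $\Gamma\tilde B=-B\Gamma$ for $\tilde B=B^T-4Q\Gamma$, are both correct and the plan of reducing to a Pinchover--Semenov uniqueness statement is exactly the right one. But the reduction itself, dividing by $v$ first and then multiplying by a Gaussian $e^{-r|z|^2}$, does not close, and the difficulty you flag as ``quantitative bookkeeping'' is in fact a structural obstruction.

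The first problem is the growth control on $u=w/v=w\,e^{\Gamma z\cdot z}$. Condition \eqref{specialclass} guarantees $w(z)\leq Ce^{-\mu|\Pi_s^\ast z|^2+\nu|\Pi_u^\ast z|^2}$ only for \emph{some} small $\mu>0$: when this estimate is produced in Section~\ref{SecUpperbound}, the constant $\mu$ is required to be $<r$ for $r$ small (Lemma~\ref{lemphimunu}). By contrast $\Gamma$ is a fixed matrix determined by \eqref{Bernoulli}, with $\Gamma\geq\gamma\Pi_s\Pi_s^\ast$ for some fixed $\gamma>0$; if $\mu<\|\Gamma\|$ then $u(z)\leq Ce^{(\|\Gamma\|-\mu)|\Pi_s^\ast z|^2+\nu|\Pi_u^\ast z|^2}$ may grow at a genuine Gaussian rate on $\mathrm{range}(\Gamma)$. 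Your claim that ``the rapid Gaussian decay of $w$ compensates the growth of $v^{-1}$'' is not furnished by \eqref{specialclass}; it is precisely the conclusion one is trying to prove. The paper sidesteps this by never dividing by the full $v$: the gauge there is $\phi=r(A_s-A_u)z\cdot z$ with $r$ chosen small enough that $rA_s\leq\mu\Pi_s\Pi_s^\ast$, so the decay in \eqref{specialclass} wins \emph{before} any Gaussian regauging.

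The second problem is that the secondary gauge $\tilde u=e^{-r|z|^2}u$ with $r$ small does not drive the zeroth-order coefficient to $-\infty$ here, unlike in Lemma~\ref{uniformpreciseboundeigenvalue}. In that earlier lemma the equation \eqref{homogequationrescaled} contains the potential $-\delta|z|^2$, which dominates for small $r$. Your equation for $u$ has no such term; after the regauge the leading quadratic coefficient is $z^T(4r^2Q+2r\tilde B^{\mathrm{sym}})z$, and for small $r$ this requires $\tilde B^{\mathrm{sym}}<0$. But Hurwitz stability of $\tilde B$ (all eigenvalues with negative real part, which you correctly establish) does not imply negative definiteness of its symmetric part. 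One would at minimum need a gauge adapted to a Lyapunov quadratic form for $\tilde B$, not $|z|^2$. The paper's construction with the matrices $A_s$ and $A_u$ solving the Lyapunov equations \eqref{Lyapunovmeq} is precisely what produces $B(A_u-A_s)+(A_u-A_s)B^\ast=-\Pi_s\Pi_s^\ast-\Pi_u\Pi_u^\ast$, a genuinely negative definite matrix, and this is the device that makes $\tilde C(z)\to-\infty$ in \emph{every} direction. Without it the argument does not reach the Pinchover--Semenov class.

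So the logical skeleton you propose is the paper's, but the key construction (the quadratic gauge built from $A_s,A_u$ with a small coupling $r$, applied directly to $w$ rather than to $w/v$) is missing, and the two simplifications you substitute for it --- dividing by $v$ first, then a plain Gaussian --- each fail for the reasons above.
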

\begin{proof} First observe that
$w(z)=Ce^{-\Gamma^{ij}z_i z_j}$ satisfies (\ref{specialclass}). This
follows from the relation $\Gamma=\Pi_s\Gamma\Pi_s^\ast\geq \gamma \Pi_s\Pi_s^\ast$
where the inequality holds for some $\gamma>0$ in the sense of quadratic forms, see
Proposition \ref{p_app2} in Appendix \ref{C}. It is also clear that $w(z)$ does solve
(\ref{homogequationrescaled1}) with $\sigma= -2{\rm tr}(\Gamma
Q)$.

To justify the uniqueness of $\sigma$ and $w(z)$
we make use of a gauge
transformation $\tilde w(z)=e^{\phi(z)}w(z)$, with a quadratic
function  $\phi(z)$ to be constructed later on, which leads to the equation of the form
\begin{equation}
Q^{ij}\frac{\partial^2 \tilde w} {\partial z_i\partial z_j}+
\tilde B^{ij} z_i\frac{\partial \tilde w} {\partial z_j} +\tilde
C(z)\tilde w =\sigma \tilde w\qquad \text{in}\ \mathbb{R}^N.
\label{homogequaux1}
\end{equation}
We will choose $\phi(z)$ so that $\tilde C(z)\to-\infty$, $\tilde
w(z)\to 0$ as $|z|\to\infty$. Then, by  \cite{PS},
there is a unique $\sigma$ such that
(\ref{homogequaux1}) has a positive solution $\tilde w(z)$
vanishing as $|z|\to \infty$ ($\tilde w(z)$ is also unique up to
multiplication by a positive constant).

Construct $\phi(z)$ by setting
$\phi=r A_s^{ij}z_i z_j-rA_u^{ij}z_i z_j$, with symmetric matrices
$A_s$ and $A_u$, to get $\tilde
B^{ij}=B^{ij}+4rQ^{jl}(A_u^{li}-A_s^{li})$ and
\begin{multline*}
\tilde C(z)=4r^2(A_u^{il}-A_s^{il})Q^{lm}(A_u^{mj}-A_s^{mj})z_i
z_j\\
+r\Bigl(\bigl(B^{il}(A_u^{lj}-A_s^{lj})+(A_u^{il}-A_s^{il})B^{lj}\bigr)z_i
z_j+2{\rm tr}\bigl(Q(A_u-A_s)\bigr)\Bigr).
\end{multline*}
Define $A_s$ and $A_u$ as solutions of the Lyapunov matrix
equations \begin{equation}
 BA_s+A_s B^{\ast}=\Pi_s\Pi_s^\ast, \
BA_u+A_u B^{\ast}=-\Pi_u\Pi_u^\ast, \label{Lyapunovmeq}
\end{equation} given by
\begin{equation}
 A_s=\int_{-\infty}^0 e^{Bt}\Pi_s\Pi_s^\ast e^{B^\ast
t}\,{\rm d}t, \qquad A_u=\int_{0}^\infty e^{Bt}\Pi_u\Pi_u^\ast
e^{B^\ast t}\,{\rm d}t, \label{LyapunovRepr}
\end{equation}
and choose sufficiently small $r_0>0$ in such a way that the matrix
$$
4r(A_u-A_s)Q(A_u-A_s) +B(A_u-A_s)+(A_u-A_s)
B^{\ast}=4r(A_u-A_s)Q(A_u-A_s)-\Pi_s\Pi_s^\ast-\Pi_u\Pi_u^\ast
$$
is negative definite for $0<r<r_0$. Then $\tilde C(z)\to-\infty$ as $|z|\to\infty$.
It remains to see that if $w(z)$ satisfies (\ref{specialclass})
then choosing small enough $r>0$ we have $\tilde w(z)\to 0$ as
$|z|\to\infty$. Here we have used the inequalities $A_s\leq \gamma_1 \Pi_s\Pi_s^*$ and $A_u\geq
\gamma_2 \Pi_u\Pi_u^*$  for some
$\gamma_1,\gamma_2>0$.
\end{proof}

So far we know that $\lambda_\ve/\ve\to \sigma$ and
$w_\ve(z)=u_\ve(\xi+\sqrt{\ve} z)$ converge in
uniformly on compacts
to a positive solution of
(\ref{homogequationrescaled1}). In order to apply Lemma
\ref{uniquness} we need only to show (\ref{specialclass}). To this
end we first construct a quadratic function $\Phi_\mu^\nu(x)$
satisfying
\begin{equation}
\overline{H}(\nabla \Phi_\mu^\nu(x),x)\leq -\delta |x-\xi|^2\qquad \text{in
a neighborhood $U(\xi)$ of} \ \xi \label{subsolution}
\end{equation}
for some $\delta>0$.

\begin{lem}
\label{lemphimunu} Let us set $\phi_s(x):=A^{ij}_{s}x_i x_j$ and
$\phi_u(x):=A^{ij}_{u}x_i x_j$, where $A_{s}$ and $A_u$ are
solutions of the Lyapunov matrix equation (\ref{Lyapunovmeq})
given by (\ref{LyapunovRepr}). Then the function
$$
\Phi_\mu^\nu(x):=\mu\phi_s(x-\xi)-\nu\phi_u(x-\xi)
$$
satisfies (\ref{subsolution}) for some $\delta>0$, provided that
$0<\mu,\nu< r$ and $r>0$ is sufficiently small.
\end{lem}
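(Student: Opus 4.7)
The plan is to Taylor expand $\overline H(p,x)$ around $(p,x)=(0,\xi)$ and insert $p=\nabla\Phi_\mu^\nu(x)$. Writing $y=x-\xi$, since $c\equiv 0$ forces $\overline H(0,x)=0$, while $-\nabla_p\overline H(0,\xi)=\overline b(\xi)=0$ (recall $\xi$ is a fixed point of $\overline b$), the expansion reads
$$
\overline H(p,x)=-\overline b(x)\cdot p+Q^{ij}(x)p_ip_j+O(|p|^3),
$$
with $Q^{ij}(x)=\tfrac12\partial^2_{p_ip_j}\overline H(0,x)$. I would further use $\overline b(x)=By+o(|y|)$ (with $B^{ij}=\partial\overline b^j/\partial x_i(\xi)$) and $Q^{ij}(x)=Q^{ij}(\xi)+O(|y|)$. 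Since $A_s,A_u$ are symmetric, $\nabla\Phi_\mu^\nu(x)=2(\mu A_s-\nu A_u)y$, which is $O(r|y|)$ under the hypothesis $0<\mu,\nu<r$, so the expansion is valid in a neighborhood of $\xi$.

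Substituting, the first-order-in-$p$ piece gives, after symmetrization,
$$
-2y^{\mathsf T}B(\mu A_s-\nu A_u)y=-y^{\mathsf T}\bigl[\mu(BA_s+A_sB^*)-\nu(BA_u+A_uB^*)\bigr]y=-\mu|\Pi_s^*y|^2-\nu|\Pi_u^*y|^2,
$$
where I used the Lyapunov equations (\ref{Lyapunovmeq}). This is the leading negative term.

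For the quadratic-in-$p$ piece $Q^{ij}(\xi)\partial_i\Phi\,\partial_j\Phi$, the key observation is that the integral representations (\ref{LyapunovRepr}) together with the fact that $\Pi_s,\Pi_u$ commute with $B$ yield $A_s=\Pi_sA_s=A_s\Pi_s^*$ and $A_u=\Pi_uA_u=A_u\Pi_u^*$. In particular $|A_sy|\le C|\Pi_s^*y|$ and $|A_uy|\le C|\Pi_u^*y|$, so expanding the square and bounding the cross term by Cauchy--Schwarz, combined with $\mu,\nu<r$, yields
$$
4y^{\mathsf T}(\mu A_s-\nu A_u)Q(\mu A_s-\nu A_u)y\le C'r\bigl(\mu|\Pi_s^*y|^2+\nu|\Pi_u^*y|^2\bigr),
$$
which for $r$ small is dominated by (half of) the Lyapunov contribution. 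The remaining errors, namely the cubic remainder in $p$, the $O(|y|)$ perturbation of $Q$, and the $o(|y|)$ nonlinear part of $\overline b$, together contribute $o(|y|^2)$ uniformly as $|y|\to 0$.

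Since $\Pi_s^*+\Pi_u^*=I$, one has $|\Pi_s^*y|^2+|\Pi_u^*y|^2\ge c|y|^2$ for some $c>0$, and therefore in a sufficiently small neighborhood $U(\xi)$ of $\xi$ the above bounds combine to
$$
\overline H(\nabla\Phi_\mu^\nu(x),x)\le -\tfrac14 c\min(\mu,\nu)|y|^2,
$$
which is (\ref{subsolution}) with $\delta=\tfrac14 c\min(\mu,\nu)$. I expect the only genuinely delicate point to be the control of the cross term $\mu\nu\,y^{\mathsf T}A_sQA_uy$ in the quadratic-in-$p$ contribution; this is precisely where the confinement of $A_sy$ and $A_uy$ to the stable and unstable subspaces is indispensable, and it is where the smallness of $r$ is used.
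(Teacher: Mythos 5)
Your proof is correct and takes essentially the same route as the paper: expand $\overline H$ about $(0,\xi)$ using $\overline H(0,x)=0$ and $\nabla_p\overline H(0,\xi)=-\overline b(\xi)=0$, turn the first-order term into $-\mu|\Pi_s^*(x-\xi)|^2-\nu|\Pi_u^*(x-\xi)|^2$ via the Lyapunov identities, and control the quadratic-in-$p$ remainder by $|A_s y|\le C|\Pi_s^* y|$, $|A_u y|\le C|\Pi_u^* y|$ together with the smallness of $\mu,\nu$. The paper disposes of the quadratic piece slightly more crudely, bounding $C|\nabla\Phi_\mu^\nu|^2\le C_1(\mu^2|\Pi_s^*y|^2+\nu^2|\Pi_u^*y|^2)$ via $|a-b|^2\le 2|a|^2+2|b|^2$ and then taking $\mu,\nu<1/C_1$, so there is never a cross term to discuss, but the content is the same.
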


\begin{proof} We have, as $x\to\xi$,
\begin{multline} \overline H(\nabla \Phi_\mu^\nu(x),x)\leq \overline H(0,x)+
\frac{\partial\overline H }{\partial
p_j}(0,x)\frac{\partial\Phi_\mu^\nu}{\partial x_j}(x)
 +C|\nabla \Phi_\mu^\nu(x)|^2\\=-(x_i-\xi_i)\frac{\partial\overline b^j}{\partial x_i}(\xi)
\frac{\partial\Phi_\mu^\nu}{\partial x_j}(x)
 +C|\nabla \Phi_\mu^\nu(x)|^2+\bar o(|x-\xi|^2)
\\
\leq-2(x_i-\xi_i)B^{ij}(\mu A_s^{jl}-\nu A_u^{jl})(x_l-\xi_l)\\
+C_1(\mu^2|\Pi_s^\ast(x-\xi)|^2+\nu^2|\Pi_u^\ast(x-\xi)|^2)+\bar
o(|x-\xi|^2). \label{expand}
\end{multline}
Note that $B(\mu A_s-\nu A_u)+(\mu A_s-\nu A_u)
B^{\ast}=\mu\Pi_s\Pi_s^\ast+\nu\Pi_u\Pi_u^\ast$, therefore the
first term in the right hand side of (\ref{expand}) can be written
as $\mu|\Pi_s^\ast(x-\xi)|^2+\nu|\Pi_u^\ast(x-\xi)|^2$. Thus
(\ref{subsolution}) does hold if $0<\mu<1/C_1$ and $0<\nu<1/C_1$.
\end{proof}

Next we prove

\begin{lem} If $\Phi_\mu^\nu(x)$ and $\mu$, $\nu$ are as in
Lemma \ref{lemphimunu} then $W(x)>\Phi_\mu^\nu(x)$ in
$\overline{U^\prime(\xi)}\setminus\{\xi\}$, where
$U^\prime(\xi)\subset U(\xi)$ is a neighborhood of $\xi$.
\label{lemsubsol}
\end{lem}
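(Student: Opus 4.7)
Normalize so that $W(\xi)=\Phi_\mu^\nu(\xi)=0$. Because $\xi\in\mathcal A_{\overline H}$ is significant, $W(x)=d_{\overline H}(x,\xi)$ in a neighborhood of $\xi$. The plan is first to derive a quadratic lower bound on $W$ coming from the linear-quadratic (LQ) linearization of the action near the hyperbolic fixed point $\xi$, then to compare it pointwise with $\Phi_\mu^\nu$, and finally to upgrade the resulting non-strict inequality to a strict one via the viscosity supersolution property of $W$ together with Lemma \ref{lemphimunu}.

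Starting from $\overline L(v,x)\ge\kappa|v+\overline b(x)|^2$ (noted just before (\ref{fixedpoints})), the substitution $u=\dot\eta+\overline b(\eta)$, and the linearization $\overline b(\xi+z)=Bz+O(|z|^2)$, the path action is bounded below, up to an $o(|x-\xi|^2)$ error, by $\kappa\,V(x-\xi)$, where
$V(z)=\inf\bigl\{\int_0^t|u|^2\,\mathrm d\tau:\dot z=u-Bz,\ z(0)=0,\ z(t)=z,\ t>0\bigr\}$
is the value of the linearized LQ problem. Standard LQ theory (used already in Section \ref{SecLowerbound} and Appendix \ref{C}) identifies $V(z)=z^\ast\Gamma z$, where $\Gamma$ is the maximal positive semi-definite solution of the Bernoulli equation (\ref{Bernoulli}), and Proposition \ref{p_app2} yields $\Gamma\ge\gamma\,\Pi_s\Pi_s^\ast$ for some $\gamma>0$. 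Since from (\ref{LyapunovRepr}) one has $A_s=\Pi_s A_s\Pi_s^\ast$, so that $\phi_s(y)\le\|A_s\|\,|\Pi_s^\ast y|^2$, and $\phi_u\ge 0$, the pointwise comparison
$W(x)-\Phi_\mu^\nu(x)\ge(\kappa\gamma-\mu\|A_s\|)|\Pi_s^\ast(x-\xi)|^2+\nu\phi_u(x-\xi)+o(|x-\xi|^2)$
holds and is $\ge 0$ on a sufficiently small ball $\overline{U'(\xi)}$ as soon as $\mu\|A_s\|<\kappa\gamma$, which is allowed by the smallness of $\mu,\nu$ in Lemma \ref{lemphimunu}.

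Strict inequality in $\overline{U'(\xi)}\setminus\{\xi\}$ then follows by a viscosity argument: if $W(x_0)=\Phi_\mu^\nu(x_0)$ at some $x_0\in\overline{U'(\xi)}\setminus\{\xi\}$, then the non-negative function $W-\Phi_\mu^\nu$ attains its minimum (zero) at $x_0\ne\xi$; the viscosity supersolution property of $W$ with $C^2$ test function $\Phi_\mu^\nu$ gives $\overline H(\nabla\Phi_\mu^\nu(x_0),x_0)\ge 0$, contradicting $\overline H(\nabla\Phi_\mu^\nu(x_0),x_0)\le -\delta|x_0-\xi|^2<0$ from Lemma \ref{lemphimunu}. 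The main obstacle is the LQ lower bound on $d_{\overline H}(x,\xi)$: it requires uniform control of paths of arbitrary duration, where fast paths are handled via the quadratic growth $\overline L(v,x)\gtrsim |v|^2-C$, while slow paths are controlled using $\overline L(v,x)\ge\kappa|v+\overline b(x)|^2$ together with the linear growth $|\overline b(x)|\gtrsim|x-\xi|$ at the hyperbolic fixed point $\xi$, which prevents paths from lingering near $\xi$ at zero cost.
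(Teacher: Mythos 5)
Your overall architecture (quadratic lower bound on $W$ near $\xi$, pointwise comparison with $\Phi_\mu^\nu$, then upgrade to strict inequality by the viscosity-supersolution argument) is plausible, and the final strict-inequality step is exactly the paper's. However, there is a genuine gap in the central step, namely the claim that the path action is bounded below, up to an $o(|x-\xi|^2)$ error, by $\kappa V(x-\xi)$ with $V$ the linearized LQ value. The linearization $\overline b(\xi+z)=Bz+O(|z|^2)$ is only available while the curve remains in a fixed neighborhood of $\xi$, so the bound is simply not justified for near-minimizing curves from $\xi$ to $x$ that make a large spatial excursion out of $U(\xi)$ and come back. Your final paragraph identifies the need for ``uniform control of paths of arbitrary duration,'' but the fast/slow dichotomy you propose does not address this: it is not duration but spatial excursion that threatens the estimate, and the observation that $|\overline b(x)|\gtrsim|x-\xi|$ ``prevents lingering near $\xi$'' is both off-target (it says nothing about curves far from $\xi$) and, as stated, false ($\xi$ is a rest point, so a curve can sit at $\xi$ at zero cost). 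This is precisely the point where the paper's proof does real work: it shows (by contradiction, using (\ref{AubryVariational}), continuity of $S_{\overline H}$, and the hypothesis that $\mathcal A_{\overline H}$ is a finite set) that if near-minimizing curves kept escaping every neighborhood of $\xi$, one would produce a point of $\mathcal A_{\overline H}$ on $\partial U''(\xi)$ for every small $U''(\xi)$, contradicting that $\xi$ is isolated in $\mathcal A_{\overline H}$. Without an argument of this type your LQ reduction is unproven.

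Beyond that gap, the route itself differs from the paper's. Once the confinement of curves to $U(\xi)$ is established, the paper avoids computing any lower bound on $W$ at all: it writes $\Phi_\mu^\nu(x)=\int\nabla\Phi_\mu^\nu(\eta^k)\cdot\dot\eta^k\,d\tau$, adds and subtracts $\overline H(\nabla\Phi_\mu^\nu(\eta^k),\eta^k)$, applies Fenchel--Young to bound the first piece by $\int\overline L(\dot\eta^k,\eta^k)\,d\tau$, and discards the second using the sign in (\ref{subsolution}). This needs no linearization of $\overline b$, no $o(|x-\xi|^2)$ bookkeeping, and no identification of $V$ with a solution of a Bernoulli equation. (As a side remark, your identification ``$V(z)=z^\ast\Gamma z$ with $\Gamma$ the solution of (\ref{Bernoulli})'' is not quite right as stated either: the HJB for $\inf\int|u|^2$ gives a Bernoulli equation with $Q$ replaced by $\tfrac14 I$; to recover (\ref{Bernoulli}) you would have to use the quadratic approximation $\overline L(v,x)\approx\tfrac14(v+\overline b)^\ast Q^{-1}(v+\overline b)$ rather than the crude bound $\kappa|v+\overline b|^2$. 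Since you only need $\Gamma\ge\gamma\Pi_s\Pi_s^\ast$ this is perhaps only a constants issue, but it is another place where care is needed.) In short: your plan could in principle be completed, but it requires the Aubry-set isolation argument that you omitted, and even then it is longer than the paper's Fenchel--Young comparison, which sidesteps the LQ computation entirely.
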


\begin{proof} Since $\xi$ is a significant point,
we have $W(x)=d_{\overline H}(x,\xi)$ in $U(\xi)$. Due to the
representation formula (\ref{distancefunction1}) $d_{\overline
H}(x,\xi)=\lim_{k\to \infty}\int_0^{t^k}\overline L(\dot
\eta^k,\eta^k)\,{\rm d}\tau$ for a sequence $t^k>0$ and absolutely
continuous curves $\eta^k: [0,t^k]\to\overline\Omega$ satisfying
the initial and terminal conditions $\eta^k(0)=\xi$,
$\eta^k(t^k)=x$. We claim that there is a neighborhood
$U^\prime(\xi)\subset U(\xi)$ such that $\forall x\in
U^\prime(\xi)$ we have $\eta^k(\tau)\in U(\xi)$ $\forall \tau
\in[0,t^k]$ when $k$ is sufficiently large. Indeed, otherwise
there are sequences of points $x^k\to \xi$ and curves $\eta^k(t)$,
$\eta^k(0)=\xi$, $\eta^k(t^k)=x^k$ that exit $U(\xi)$ at a time
$t=\tau^k$ and $\lim_{k\to \infty}\int_0^{t^k}\overline L(\dot
\eta^k,\eta^k)\,{\rm d}\tau=0$. Let us set $y^k:=\eta^k(\tau^k)\in
\partial U^\prime(\xi)$, then $\lim_{k\to \infty}S_{\overline H}(y^k,\xi)=0$ and
after extracting a subsequence $y^k\to y\in\partial
U^{\prime}(\xi)$ we obtain $S_{\overline H}(y,\xi)=0$. Therefore
$y\in\mathcal{A}_{\overline H}$. We can repeat this reasoning to
find $y\in\mathcal{A}_{\overline H}\cap\partial
U^{\prime\prime}(\xi)$ for every open subset
$U^{\prime\prime}(\xi)$ of $U(\xi)$ containing point $\xi$. Thus
$\xi$ cannot be isolated point of $\mathcal{A}_{\overline H}$,
contradicting (\ref{fixedpoints}).

 Now using
(\ref{subsolution}) we get, for every $x\in U^\prime(\xi)$
\begin{multline*}
\Phi_\mu^\nu(x)=\int_0^{t^k}\nabla
\Phi_\mu^\nu(\eta^k)\cdot\dot\eta^k\,{\rm
d}\tau=\int_0^{t^k}\bigl(\nabla
\Phi_\mu^\nu(\eta^k)\cdot\dot\eta^k-\overline H(\nabla
\Phi_\mu^\nu(\eta^k),\eta^k)\bigr)\,{\rm d}\tau\\
+\int_0^{t^k} \overline H(\nabla \Phi_\mu^\nu(\eta^k),\eta^k) \,{\rm
d}\tau\leq \int_0^{t^k}\overline L(\dot \eta^k,\eta^k) \,{\rm d}\tau,
\end{multline*} when $k$ is sufficiently large. It follows that
$\Phi_\mu^\nu\leq W$ in $U^\prime(\xi)$. On the other hand if
$\Phi_\mu^\nu= W$ at a point $x_0\in U^\prime(\xi)$ then $x_0$ is
a local minimum of $W-\Phi_\mu^\nu$ and ${\overline
H}(\nabla\Phi_\mu^\nu(x_0),x_0)\geq 0$ since $W$ is a viscosity
solution of ${\overline H}(\nabla W(x),x)=0$ in $\Omega$.
Therefore $x_0=\xi$ by (\ref{subsolution}), i.e. $\Phi_\mu^\nu<W$
in $U^\prime(\xi)\setminus\{\xi\}$ and  by choosing, if necessary, a smaller
$U^\prime(\xi)$ we are done.
\end{proof}

The following is the crucial step in establishing
(\ref{specialclass}). We construct a test function $\Psi_\ve(x)$
of the form $\Psi_\ve(x)=\Phi_\mu^\nu(x)-\ve \tilde
\theta_\ve(x,x/\ve)$ which satisfies
\begin{equation}
 -\ve a^{ij}(x,x/\ve)\frac{\partial^2
\Psi_\ve}{\partial x_i\partial
x_j}+H\bigl(\nabla\Psi_\ve(x),x,x/\ve\bigr)\leq \overline H(\nabla
\Phi_\mu^\nu(x),x)+C\ve\qquad \text{in}\ U^\prime(\xi).
\label{subsolutunoform}
\end{equation}
Assuming first that the solution $\vartheta(p,x,y)$ of
(\ref{effproblresonans}), normalized by $\int_Y
\vartheta(p,x,y){\rm d} y=1$, is sufficiently smooth, we set
$\tilde \theta_\ve(x,y)=\theta\bigl(\nabla \Phi_\mu^\nu(x),x,y\bigr)$, where
$\theta(p,x,y) =\log \vartheta(p,x,y)$. Then, since
$$
-a^{ij}(x,y)\frac{\partial^2 \theta(p,x,y)}{\partial y_i\partial
y_j}+H\bigl(p+\nabla_y\theta(p,x,y),x,y\bigr)=\overline H(p,x),
$$
one easily shows (\ref{subsolutunoform}). Note that in this case
$\tilde \theta_\ve(x,y)$ is independent of $\ve$. In the general
case, thanks to $C^1$-regularity of the coefficients $a^{ij}(x,y)$
and $b^j(x,y)$, all the first and second order partial derivatives of
$\vartheta(p,x,y)$ exist and continuous on
$\mathbb{R}^N\times\overline{\Omega}\times \mathbb{R}^N $, except
(possibly) $\partial^2\vartheta(p,x,y)/\partial x_i\partial x_j$.
To obtain sufficient regularity of $\tilde \theta_\ve(x,y)$ we set
%
$$
\tilde\theta_\ve(x,y)= \int
\varphi_\ve(x-x^\prime)\theta\bigl(\nabla\Phi_\mu^\nu(x),x^\prime,y\bigr){\rm
d}x^\prime,
$$
where $\varphi_\ve(x)=\ve^{-N}\varphi(x/\ve)$, with $\varphi(x)$ being a smooth
compactly supported nonnegative function and $\int \varphi(x)\,{\rm
d}x=1$. Then we have
$$
a^{ij}(x,x/\ve)\Bigl(\frac{\partial^2 \theta}{\partial y_i\partial
y_j}\bigl(\nabla\Phi_\mu^\nu(x),x,x/\ve\bigr)- \frac{\partial^2}{\partial
x_i\partial x_j} \bigl(\theta_\ve(x,x/\ve)\bigr)\Bigr)\leq
\frac{C}{\ve}
$$
and $\bigl|\nabla_y\theta\bigl(\nabla\Phi_\mu^\nu(x),x,x/\ve\bigr)-\nabla
\bigl(\theta_\ve(x,x/\ve)\bigr)\bigr|\leq C$. This eventually leads to
(\ref{subsolutunoform}).

It follows from (\ref{subsolutunoform}) and (\ref{subsolution}) that
\begin{equation}
\label{finalsubsolution}
-\ve a^{ij}(x,x/\ve)\frac{\partial^2 \Psi_\ve}{\partial
x_i\partial x_j}+H\bigl(\nabla\Psi_\ve(x),x,x/\ve\bigr)\leq -\delta|x-\xi|^2+C\ve\qquad
\text{in}\ U^\prime(\xi).
\end{equation}
Consider now the function $W_\ve-\Psi_\ve$. By Lemma
\ref{lemsubsol} we have $W_\ve>\Psi_\ve$ on $\partial
U^\prime(\xi)$ for sufficiently small $\ve$, therefore either
$W_\ve\geq\Psi_\ve$ in $U^\prime (\xi)$ or $W_\ve-\Psi_\ve$
attains its negative minimum in $U^\prime(\xi)$ at a point
$x_\ve$. In the latter case we have $\nabla W_\ve(x_\ve)=\nabla
\Psi_\ve(x_\ve)$ and $a^{ij}(x_\ve,x_\ve/\ve)\frac{\partial^2
W_\ve}{\partial x_i\partial x_j}(x_\ve)\geq
a^{ij}(x_\ve,x_\ve/\ve)\frac{\partial^2 \Psi_\ve}{\partial
x_i\partial x_j} (x_\ve)$, this yields
\begin{multline*}
\lambda_\ve=-\ve a^{ij}(x_\ve,x_\ve/\ve)\frac{\partial^2
W_\ve}{\partial x_i\partial x_j}(x_\ve)+ H\bigl(\nabla
W_\ve(x_\ve),x_\ve,x_\ve/\ve\bigr)\\
\leq -\ve a^{ij}(x_\ve,x_\ve/\ve)\frac{\partial^2
\Psi_\ve}{\partial x_i\partial x_j}(x_\ve)+ H\bigl(\nabla
\Psi_\ve(x_\ve),x_\ve,x_\ve/\ve\bigr)\leq -\delta|x_\ve-\xi|^2+C\ve.
\end{multline*}
Thus either $W_\ve>\Psi_\ve$ in $U^\prime(\xi)$ or $W_\ve\geq
\Psi_\ve+W_\ve(x_\ve)-\Psi_\ve(x_\ve)$ in $U^\prime(\xi)$ and
$x_\ve$ satisfies $|x_\ve-\xi|\leq C\sqrt{\ve}$. Both cases lead
to the bound $W_\ve(x)\geq \Phi_\mu^\nu(x)+W_\ve(\tilde
x_\ve)-\beta\ve$, where $\tilde x_\ve$ is either $\xi$ or $x_\ve$
(recall that $u_\ve$ is normalized by $u_\ve(\xi)=1$, i.e.
$W_\ve(\xi)=0$). Then setting $z=(x-\xi)/\sqrt{\ve}$  we get
$$
w_\ve(z)\leq C w_\ve(z_\ve) e^{-\mu\phi_s(z)+\nu\phi_u(z)}\qquad\text{in}\ (U(\xi)-\xi)/\sqrt{\ve},
$$
where $z_\ve=(\tilde x_\ve-\xi)/\sqrt{\ve}$ and hence $|z_\ve|\leq
C$. Observe that since $z_\ve$ stay in a fixed compact as $\ve\to 0$, then $w_\ve(z_\ve)\leq C$ and in the
limit we therefore obtain
$$
w(z)\leq C e^{-\mu\phi_s(z)+\nu\phi_u(z)}\qquad \text{in}\
\mathbb{R}^N.
$$
It remains to note that $\phi_s(z)\geq \gamma_3 |\Pi_s^*z|^2$ and
$\phi_u(z)\leq \gamma_4 |\Pi_u^*z|^2$ for some $\gamma_3,\gamma_4
>0$. Hence $w(z)$ does satisfy (\ref{specialclass}), and by Lemma \ref{uniquness}
we have $\sigma=-2{\rm tr}(\Gamma Q)=\sigma(\xi)$. Thus
\begin{equation}
\label{optimalupperbound}
 \limsup_{\ve\to 0}\lambda_\ve/\ve=\sigma(\xi)\qquad \text{for every significant point $\xi$}.
\end{equation}

Inequalities (\ref{optimallowerbound}) and
(\ref{optimalupperbound}) prove formula (\ref{selection}).
Moreover they imply the uniqueness of the limiting additive
eigenfunction $W(x)$, provided that the maximum in
(\ref{selection}) is attained at exactly one $\xi=\overline\xi$.
Indeed, we know that, up to extracting a subsequence, functions
$W_\ve$ converge uniformly (on compacts in $\Omega$) to an
additive eigenfunction $W(x)$; here $W_\ve=-\ve\log u_\ve$ and
$u_\ve$ are referred to the eigenfunctions normalized by
(\ref{normalization}). By (\ref{optimalupperbound}) the unique
significant point (associated to the chosen subsequence) is
$\overline\xi$. Therefore $\overline \xi$ is the only minimal
element in $\mathcal{A}_{\overline H}$ with respect to the order
relation $\preceq$ defined in (\ref{orderrelation}); hence it is the least element
of $\mathcal{A}_{\overline H}$, i.e. $\overline \xi\preceq \xi$ for
every $\xi\in \mathcal{A}_{\overline H}$. This means that
$W(\xi)=W(\overline \xi)+d_{\overline H}(\xi,\overline\xi)$
$\forall \xi \in \mathcal{A}_{\overline H}$, and consequently
$W(x)=d_{\overline H}(x,\overline\xi)+W(\overline \xi)$. Finally,
by Corollary \ref{boundforminima} we have $W(\overline \xi)=0$, and
Theorem \ref{thwithoutdissipation} is now completely proved.
\hfill$\square$

\section{Other scalings}
\label{Sec8}

Theorem \ref{thwithoutdissipation} can be generalized to the case of $\ve^{\alpha}$-scaling, $\alpha>1$, of the
fast variable in (\ref{operator}) (with $c(x,y)=0$). In this case the effective drift
is still defined by formula (\ref{effectivedrift}) where $\theta^*$ is now the $Y$-periodic solution
of the equation $\frac{\partial^2}{\partial y_i\partial y_j} \bigl(a^{ij}(x,y)\theta^*(x,y)\bigr)=0$ normalized
by $\int_Y\theta^*\,{\rm d} y$. However, the discontinuous dependence of the effective drift on the parameter
$\alpha\geq 1$ (at $\alpha=1$) might lead to a significant shift of the concentration set
of the eigenfunction $u_\ve$ from points $\xi$ where $\overline{b}(\xi)=0$, if $\alpha>1$ sufficiently close to $1$.

We outline main changes to be made in order to adapt the  arguments
of Sections \ref{SecLowerbound} and \ref{SecUpperbound}
to the case $\alpha>1$.
First of all
let us introduce the approximate Hamiltonian $\overline{H}_\ve(p,x)$ as
the (additive) eigenvalue corresponding to a $Y$-periodic eigenfunction of
\begin{equation}
-a^{ij}(x,y)\frac{\partial^2 \theta_\ve(p,x,y)}{\partial y_i\partial
y_j}+H(p+\ve^{\alpha-1}\nabla_y\theta_\ve(p,x,y),x,y)=\overline H_\ve(p,x),
\label{epsilonHamiltonian}
\end{equation}
and the approximate drift $\overline{b}_\ve(x)$ by
\begin{equation*}
\overline{b}_\ve^{j}(x)=-\frac{\partial \overline{H}_\ve}{\partial p_j} (0,x).
\end{equation*}
The eigenvalue $\overline H_\ve$ is unique and $\theta_\ve$ is unique up to an additive constant,
moreover  $\theta_\ve$ can be found as the scaled logarithmic transformation
$\theta_\ve=-\frac{1}{\ve^{2(\alpha-1)}}\log\vartheta_\ve$ of a
positive $Y$-periodic eigenfunction of the problem
\begin{equation*}
\ve^{2(1-\alpha)}a^{ij}(x,y)\frac{\partial^2 \vartheta_\ve}{\partial y_i\partial
y_j}+\ve^{1-\alpha} (b^j(x,y)-2a^{ij}(x,y) p_i)\frac{\partial \vartheta_\ve}{\partial y_j}+
H(p,x,y)\vartheta_\ve=
\overline H_\ve(p,x)\vartheta_\ve.
\end{equation*}
Similarly to the case $\alpha=1$ the drift $\overline{b}_\ve(x)$ can be equivalently defined by
\begin{equation*}
\overline{b}_\ve(x)=\int_Y b(x,y)\theta^*_\ve(x,y)\, {\rm d}y,
\end{equation*}
via the $Y$-periodic solution $\theta^*_\ve$ of the equation
\begin{equation*}
\frac{\partial^2 }{\partial y_i\partial
y_j}\bigl(a^{ij}(x,y)\theta^\ast_\ve\bigr)- \ve^{\alpha-1}\frac{\partial }{\partial y_j}\bigl(b^j(x,y) \theta^\ast_\ve\bigr)=0.
\end{equation*}

It is clear that $\overline{b}_\ve\to\overline{b}$ in $C^1$ topology, therefore if $\overline{b}$ has,
say $n$, zeros in $\Omega$, where the hyperbolicity condition (for the ODE $\dot x=-\overline{b}(x)$) is satisfied, then $\overline{b}_\ve$ has exactly
$n$ zeros at the distance at most $O(\ve^{\alpha-1})$ from the corresponding zeros of $\overline{b}$. Then, to show the lower
bound for eigenvalues one follows
the lines of Section~\ref{SecLowerbound} with a zero $\xi_\ve$ of $\overline{b}_\ve$ in place of the corresponding zero $\xi$ of
$\overline b$, and $\theta^\ast_\ve$ in place of $\theta^\ast$. Note that although $\xi_\ve\to \xi$ as $\ve\to 0$, the
distance between this two points might be of order $\ve^{\alpha-1}$, so that in the local scale $\sqrt{\ve}$ this distance
tends to infinity. Nevertheless, up to the shift from $\xi$ to $\xi_\ve$ the local analysis is exactly the same as in
Section~\ref{SecLowerbound}.
Let us emphasize that for $\alpha\in (1,3/2)$ the statement of of Lemma~\ref{uniformpreciseboundeigenvalue}
remains valid only if at least one of zeros of $\overline b$ is an interior point of $\Omega$.

The argument of Section \ref{SecUpperbound}
can also be adapted to the case $\alpha>1$. As in the proof of the lower bound one finds
equation (\ref{homogequationrescaled1}) for the limit of rescaled functions $w_\ve(z)=u_\ve(\xi_\ve+\sqrt{\ve}z)$,
while the construction of the functions $\Phi^\nu_\mu$ and $\Psi_\ve$ is to be modified.
One can linearize the drift $\overline b_\ve$ at $\xi_\ve$ and construct the quadratic function  $\Phi^\nu_\mu$ (which now
depends on $\ve$) following Section~\ref{SecUpperbound} with $B^{ij}_\ve=\frac{\partial \overline{b}^j}{\partial x_i}(\xi_\ve)$
in place of $B^{ij}$; also, in the construction of the function $\Psi_\ve$ one makes use of  the eigenfunction $\theta_\ve$
(cf. (\ref{epsilonHamiltonian})) and sets
$\Psi_\ve(x) =\Phi^\nu_\mu(x) +\ve^{2\alpha -1}\theta_\ve(\nabla\Phi^\nu_\mu(x),x,x/\ve^\alpha)$. Details are left to the reader.

Finally note that the case $\alpha <1$ remains completely open.

\section{Example}

Here we consider an example of an operator of the form
(\ref{operator-redu}) for which conditions  (\ref{fixedpoints}) are fulfilled.  Let $x^y(t)$ be a solution of the ODE
$\dot x^y=-\overline b(x^y)$, $x^y(0)=y$.
We assume that
\begin{itemize}
\item The vector field $\overline b(x)$ has exactly three
zeros $\xi^1,\xi^2,\xi^3$ in
$\overline\Omega$. All of them are interior points of $\Omega$.
\item $\xi^1$ and $\xi^3$ are stable hyperbolic points,
that is eigenvalues of
$\bigl(-\frac{\partial\overline b^j}{\partial x_i}(\xi^1)\bigr)_{i,j=\overline{1,N}}$ and
$\bigl(-\frac{\partial\overline b^j}{\partial x_i}(\xi^3)\bigr)_{i,j=\overline{1,N}}$
have negative real parts; $\xi^2$ is a hyperbolic point and $\sigma(\xi^2)>\max\{\sigma(\xi^1),\sigma(\xi^3)\}$.
\item The ODE $\dot x=-\overline b(x)$ does not have
a solution with $\lim_{t\to+\infty}x(t)=\lim_{t\to-\infty}x(t)=\xi^2$.
\item For every
$y\in\overline\Omega\setminus\bigcup_{j=1}^3\{\xi^j\}$,
either $\lim\limits_{t\to-\infty}x^y(t)=\xi^2$, or $\inf\{t<0\,;\,x^y(t)\in\overline\Omega\}>-\infty$.
\end{itemize}
One can easily check that under these assumptions the Aubry set ${\mathcal A}_{\overline H}$ coincides with
$\bigcup_{j=1}^3\{\xi^j\}$.  Hence, by Theorem
\ref{thwithoutdissipation},  $W(x)=d_{\overline H}(x,\xi^2)$ and $\lambda_\ve=\ve\sigma(\xi^2)+\bar o(\ve)$.

It is interesting to trace in this example the possible structure of the set $\mathcal{Z}=\{x\in\Omega\,;\,W(x)=0\}$.
It depends on whether there are trajectories of the equation $\dot x=-\overline b(x)$ going from
$\xi^1$ or $\xi^3$ to $\xi^2$, or not.

Let $\mathcal{Z}^1$ be the set of all points $y\in \overline\Omega$ such that
$\lim\limits_{t\to+\infty}x^y(t)=\xi^1$ and $\lim\limits_{t\to-\infty}x^y(t)=\xi^2$, and
let
$\mathcal{Z}^3$ the set of all points $y\in \overline\Omega$ such that  $\lim\limits_{t\to+\infty}x^y(t)=\xi^3$ and $\lim\limits_{t\to-\infty}x^y(t)=\xi^2$. It follows from (\ref{distancefunction1}) that $\mathcal{Z}=\{\xi^2\}\cup\overline{\mathcal{Z}^1}\cup\overline{\mathcal{Z}^3}$.

\section{Appendices}
\appendix

\section{\large Uniqueness of additive eigenfunction}
\label{A}

The following simple result is a uniqueness criterion for
problem (\ref{homprob})-(\ref{hombcond}).

\begin{prop} \label{p_app1} Let $\lambda=\lambda_{\overline H}$
so that (\ref{homprob})-(\ref{hombcond}) has a solution $W$. Then
$W$ is unique (up to an additive constant) if and only if
$S_{\overline H-\lambda}(x,y)=0$ for all
$x,y\in\mathcal{A}_{\overline H-\lambda}$, where $S_{\overline
H-\lambda}(x,y)=d_{\overline H-\lambda}(x,y)+d_{\overline
H-\lambda}(y,x)$.
\end{prop}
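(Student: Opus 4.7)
The plan is to build the proof around the representation formula (\ref{represnt_forH}), which says that every solution is recovered from its restriction $g$ to $\mathcal{A}_{\overline H-\lambda}$ (here I abbreviate $d(\cdot,\cdot):=d_{\overline H-\lambda}(\cdot,\cdot)$ and $\mathcal{A}:=\mathcal{A}_{\overline H-\lambda}$), and that the admissible boundary data $g$ are exactly those satisfying the compatibility condition $g(x)-g(y)\le d(x,y)$ for all $x,y\in\mathcal{A}$. So uniqueness up to constants is equivalent to showing that the set of compatible $g$'s is a one-parameter family of translates of a single function.

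For the ``if'' direction I would argue as follows. Assume $S(x,y)=d(x,y)+d(y,x)=0$ for all $x,y\in\mathcal{A}$. Then the compatibility inequalities applied in both orders force $g(x)-g(y)\le d(x,y)$ and $g(y)-g(x)\le d(y,x)=-d(x,y)$, i.e.\ $g(x)-g(y)=d(x,y)$ for every pair $x,y\in\mathcal{A}$. Fixing any basepoint $y_0\in\mathcal{A}$ we get $g(x)=d(x,y_0)+g(y_0)$, so $g$ is determined on $\mathcal{A}$ up to the free constant $g(y_0)$. Plugging this into (\ref{represnt_forH}) shows that any two solutions of (\ref{homprob})--(\ref{hombcond}) differ by a constant.

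For the ``only if'' direction I would argue by contrapositive, producing two genuinely different solutions when $S$ fails to vanish. Suppose $x_0,y_0\in\mathcal{A}$ satisfy $S(x_0,y_0)>0$. As already recorded in the excerpt, for any $y\in\mathcal{A}$ the function $x\mapsto d(x,y)$ is a solution of (\ref{homprob})--(\ref{hombcond}) (subsolution in $\Omega$, and the defining property of $\mathcal{A}$ promotes it to a supersolution on all of $\overline\Omega$). Consider the two solutions $W_1(x):=d(x,x_0)$ and $W_2(x):=d(x,y_0)$ and assume, for contradiction, that $W_2-W_1\equiv c$. Evaluating at $x_0$ gives $c=d(x_0,y_0)$, while evaluating at $y_0$ gives $c=-d(y_0,x_0)$; adding yields $S(x_0,y_0)=0$, a contradiction.

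I expect the ``if'' direction to be essentially immediate once the representation formula is in hand, so the only step that requires any care is ensuring that in the ``only if'' direction the two candidate solutions $d(\cdot,x_0)$ and $d(\cdot,y_0)$ are actually admissible solutions of the full state-constraint problem, not merely subsolutions. This is precisely the content of the characterization (\ref{Aubry}) of $\mathcal{A}$ and the properties of $d$ listed just above it in the excerpt, so no additional work beyond citing those facts is needed. Thus the whole proof boils down to feeding the equality $d(x,y)=-d(y,x)$ (equivalent to $S=0$) into the compatibility condition for the representation formula in one direction, and testing the hypothetical equality $W_2-W_1\equiv c$ at the two Aubry points in the other direction.
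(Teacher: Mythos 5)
Your proof is correct and follows essentially the same route as the paper: in the forward direction both arguments feed $S=0$ into the two-sided bound $W(x)-W(y)\leq d(x,y)$ to force equality on the Aubry set and then invoke the representation formula~(\ref{represnt_forH}); in the converse direction both exhibit the two distance-function solutions $d(\cdot,\xi)$ and $d(\cdot,\xi')$ (up to a normalizing constant) and check they agree at one Aubry point but differ at the other. The only cosmetic difference is that you phrase the converse as a contradiction from an assumed constant offset, while the paper directly evaluates the two explicit solutions at $\xi$ and $\xi'$.
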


\begin{proof} If $S_{\overline H-\lambda}(x,y)=0$
then $W(x)-W(y)=d_{\overline H-\lambda}(x,y)$, since $\forall x,y$
we have $W(x)-W(y)\leq d_{\overline H-\lambda}(x,y)$. In
particular, if $S_{\overline H-\lambda}(x,y)=0$ $\forall
x,y\in\mathcal{A}_{\overline H-\lambda}$ then pick $\xi\in
\mathcal{A}_{\overline H-\lambda}$ to get $W(x)=d_{\overline
H-\lambda}(x,\xi)+W(\xi)$ on $\mathcal{A}_{\overline H-\lambda}$.
Thus $W(x)=d_{\overline H-\lambda}(x,\xi)+W(\xi)$ in $\Omega$
(according to the representation formula (\ref{represnt_forH})),
i.e. $W$ is unique up to an additive constant.

If there are two points $\xi,\xi^\prime\in\mathcal{A}_{\overline
H-\lambda}$ such that $S_{\overline H-\lambda}(\xi,\xi^\prime)>0$,
then $W_0(x)=d_{\overline H-\lambda}(x,\xi)$ and
$W_1(x)=d_{\overline H-\lambda}(x,\xi^\prime)-d_{\overline
H-\lambda}(\xi,\xi^\prime)$ are two solutions of
(\ref{homprob})-(\ref{hombcond}) and $0=W_0(\xi)=W_1(\xi)$, while
$W_0(\xi^\prime)-W_1(\xi^\prime)=S_{\overline
H-\lambda}(\xi,\xi^\prime)>0$.
\end{proof}

\section{\large Aubry set for small perturbations of a gradient field}
\label{B}

We outline here the proof of the claim stated in Remark
\ref{perturbedAubry}. Consider a vector field $b(x,y)$ which is a
$C^1$-small perturbation of $\nabla P(x)$, i.e. $\|b(x,y)-\nabla
P(x)\|_{C^1}=\delta$
and $\delta$ is sufficiently small.  Let us show that the Aubry
set $\mathcal{A}_{\overline H}$ of the Hamiltonian $\overline
H(p,x)$ given by (\ref{effproblresonans}) (with $c(x,y)=0$) is
exactly the set of zeros of $\overline b(x)$ in $\Omega$,
provided that  $\delta$ is sufficiently small and $P\in C^2(\overline\Omega)$ is 
as in Remark \ref{perturbedAubry}.

Without loss of generality we can assume that $\overline
H(p,x)=\sum p_i^2- \overline{b}^i(x) p_i$, since the Aubry set of
this Hamiltonian coincides with that of the effective Hamiltonian
given by (\ref{effproblresonans}).  Let us first find the Aubry
set $\mathcal{A}_{H^0}$ of the Hamiltonian $H^0(p,x)=\sum p_i^2-
p_i\frac{\partial P(x)}{\partial x_i}$. We calculate the
corresponding Lagrangian $L^0(v,x)=\frac {1}{4}|v+\nabla P(x)|^2$
and use criterion (\ref{AubryVariational}). Let
$\xi\in\mathcal{A}_{H^0}$, then there exist a sequence of
absolutely continuous curves $\eta^n:[0,t^n]\to\overline\Omega$,
$\eta^n(0)=\eta^n(t^n)=\xi$, such that $t^n\to\infty$ and
$\lim_{n\to\infty} \int_0^{t^n}|\dot \eta^n+\nabla
P(\eta^n)|^2{\rm d}\tau=0$. This yields
$$
0=\lim_{n\to\infty} \int_0^{t^n}(|\dot \eta^n|^2+2\nabla
P_\delta(\eta^n)\cdot \dot \eta^n+|\nabla P(\eta^n)|^2){\rm
d}\tau=\lim_{n\to\infty}\int_0^{t^n}(|\dot \eta^n|^2+|\nabla
P(\eta^n)|^2){\rm d}\tau.
$$
Therefore, $\eta^n(t)\to \xi$ uniformly on every fixed interval
$[0,T]$. It follows that $\xi$ belongs to the set
$K=\{x\in\Omega;\ \nabla P(x)=0\}$.
Clearly, we also have $K\subset \mathcal{A}_{H^0}$. Now note that
the effective drift $\overline b(x)$ given by
(\ref{effectivedrift}), can be written as $\overline b(x)=\nabla
P(x)+\tilde b_\delta(x)$ with $C^1$-small $\tilde b_\delta(x)$,
$\|\tilde b_\delta \|_{C^1}=\overline O(\delta)$ as $\delta\to 0$.
Thanks to the assumption on critical points of $P(x)$, zeros of
$\overline b(x)$ are isolated and are close to $K$ when $\delta$
is sufficiently small. Moreover, if $\omega$ is a small
neighborhood of $\xi\in K$ then $\overline b(x)$ vanishes at
exactly one point $\xi_\delta \in \omega$ and
$|\xi-\xi_\delta|=O(\delta)$. Therefore, we can define a $C^2$
function $P_\delta$ such that $|\nabla P_\delta(x)|>0$ in
$\overline \Omega \setminus K_\delta$, where $K_\delta$ is the set
of zeros of $\overline b(x)$,  and $|\overline b(x)-\nabla
P_\delta (x)|=g_\delta(x) |\nabla P_\delta(x)|$ with
$\max_{x\in\overline\Omega} g_\delta(x)=\overline{O}(\delta)$ as
$\delta\to 0$. This yields the following  bound (for small
$\delta$)
$$
|v+\overline b(x)|^2\geq\frac{1}{2}|v|^2+2\nabla P_\delta(x)\cdot
v +V_\delta(x), \qquad \forall v\in \mathbb{R}^N, \ x\in \overline\Omega,
$$
where $V_\delta>0$ in $\overline\Omega\setminus K_\delta$. Then,
arguing as above 
we see that $\mathcal{A}_{\overline H}=K_\delta$. Moreover, every
 $\xi\in K_\delta$ is a hyperbolic fixed
point of the ODE $\dot x=-\overline b(x)$, as $\delta$ is sufficiently
small.

\section{\large Properties of solutions
of Bernoulli matrix equation}
\label{C}

We provide here some results on Bernoulli equation (\ref{Bernoulli}),
used in Sections \ref{SecLowerbound} and \ref{SecUpperbound}. Recall
that the matrix $Q$ in (\ref{Bernoulli}) is positive definite,
$\Pi_s$ and $\Pi_u$ denote spectral projectors on the invariant
subspaces of the matrix $B$ corresponding to eigenvalues with
positive and negative real parts.

\begin{prop}
\label{p_app2} The maximal positive semi-definite solution
$\Gamma$ of  (\ref{Bernoulli}) possesses the following properties:
(i) $\Gamma=\Pi_s \Gamma \Pi_s^\ast$, \ \ (ii)
$\Gamma\geq\gamma\Pi_s\Pi_s^\ast$ (in the sense of quadratic
forms) for some $\gamma>0$, \ \ (iii) $2{\rm tr} (Q\Gamma)={\rm
tr} (B\Pi_s)$, i.e. $2{\rm tr} (Q\Gamma)$ is the sum of positive
real parts of eigenvalues of $B$.
\end{prop}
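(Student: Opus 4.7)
The plan is to prove the three statements in the order given. The core tools are: (i) a pairing of the Bernoulli equation against eigenvectors of $B^\ast$, and (ii)--(iii) a reduction to a classical Lyapunov equation on the stable invariant subspace $V:=\mathrm{range}(\Pi_s)$.

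For (i), I will show $\Gamma v=0$ for every $v$ in an invariant subspace of $B^\ast$ corresponding to an eigenvalue with negative real part (so $v\in\mathrm{range}(\Pi_u^\ast)$). Take first an eigenvector $B^\ast v=\mu v$ with $\mathrm{Re}(\mu)<0$. Pairing (\ref{Bernoulli}) with $v^\ast$ on the left and $v$ on the right yields
\[
2\,\mathrm{Re}(\mu)\, v^\ast\Gamma v \;=\; v^\ast B\Gamma v + v^\ast\Gamma B^\ast v \;=\; 4\, v^\ast\Gamma Q\Gamma v \;=\; 4\|Q^{1/2}\Gamma v\|^2.
\]
The left side is non-positive and the right non-negative, so both vanish and $\Gamma v=0$. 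For a Jordan chain $B^\ast v_k=\mu v_k+v_{k-1}$ with $\mathrm{Re}(\mu)<0$, the same identity is produced inductively on $k$: the hypothesis $\Gamma v_{k-1}=0$ together with $\Gamma=\Gamma^\ast$ kills the cross-terms $v_k^\ast\Gamma v_{k-1}$ and $v_{k-1}^\ast\Gamma v_k$. This gives $\Gamma\Pi_u^\ast=0$, hence $\Gamma=\Gamma\Pi_s^\ast$, and taking adjoints (using $\Gamma=\Gamma^\ast$) yields $\Gamma=\Pi_s\Gamma\Pi_s^\ast$, which is (i).

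For (ii) and (iii), I restrict to $V$. By (i), $\tilde\Gamma:=\Gamma|_V$ is positive semi-definite; $\tilde B:=B|_V$ has spectrum in the open right half-plane (so $-\tilde B$ is Hurwitz); and $\tilde Q$, the compression of $Q$ to $V$, is positive definite. The restricted equation is $\tilde B\tilde\Gamma+\tilde\Gamma\tilde B^\ast=4\tilde\Gamma\tilde Q\tilde\Gamma$. Granting that $\tilde\Gamma$ is invertible, multiplication on both sides by $\tilde\Gamma^{-1}$ turns it into the linear Lyapunov equation
\[
\tilde B^\ast\tilde\Gamma^{-1}+\tilde\Gamma^{-1}\tilde B=4\tilde Q,
\]
which, since $-\tilde B$ is Hurwitz and $\tilde Q>0$, has a unique positive definite solution. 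Hence $\tilde\Gamma\geq\gamma I_V$ for some $\gamma>0$, i.e.\ $\Gamma\geq\gamma\Pi_s\Pi_s^\ast$, giving (ii). For (iii), multiply this Lyapunov equation on the right by $\tilde\Gamma$ and take the trace on $V$: cyclicity gives $\mathrm{tr}(\tilde\Gamma^{-1}\tilde B\tilde\Gamma)=\mathrm{tr}(\tilde B)$, and reality of $B$ gives $\mathrm{tr}(\tilde B^\ast)=\mathrm{tr}(\tilde B)=\mathrm{tr}(B\Pi_s)$, so $2\,\mathrm{tr}(B\Pi_s)=4\,\mathrm{tr}(\tilde Q\tilde\Gamma)=4\,\mathrm{tr}(Q\Gamma)$, where the last equality uses (i) and cyclicity. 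Since $\tilde B$ has exactly the eigenvalues of $B$ with positive real part, $\mathrm{tr}(B\Pi_s)$ equals the sum of those positive real parts.

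The main obstacle is the invertibility of $\tilde\Gamma$ needed in (ii). I plan to obtain it from the regularization used in Section~\ref{SecLowerbound}: the Riccati solutions $\Gamma_\delta>0$ converge to $\Gamma$ as $\delta\to 0^+$, and a perturbative Lyapunov analysis of $\Pi_s\Gamma_\delta\Pi_s^\ast$ produces a uniform positive-definite lower bound on $V$ inherited in the limit. Alternatively, a nontrivial kernel $K\subset V$ of $\tilde\Gamma$ would, via the restricted equation, be $\tilde B^\ast$-invariant and force $\tilde Q$ to vanish on a $\tilde B$-invariant subspace of $K$, contradicting $Q>0$; equivalently, one can construct a strictly larger PSD solution of the Bernoulli equation, contradicting the maximality of $\Gamma$.
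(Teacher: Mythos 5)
Your argument for (i) is correct and genuinely simpler than the paper's. The paper shows $\Pi_u\Gamma\Pi_u^\ast=0$ by constructing an integral representation for a solution of a derived Lyapunov-type equation and invoking uniqueness; you instead pair the Bernoulli equation directly against Jordan chains of $B^\ast$ in $\mathrm{range}(\Pi_u^\ast)$, using positivity of both sides of the resulting identity. This is cleaner, requires no regularization, and the inductive treatment of generalized eigenvectors is handled correctly. Likewise, your route to (iii) — multiply the Lyapunov equation $\tilde B^\ast\tilde\Gamma^{-1}+\tilde\Gamma^{-1}\tilde B=4\tilde Q$ by $\tilde\Gamma$ and trace — is more transparent than the paper's $\delta\to0^+$ limit of $2\mathrm{tr}(Q\tilde\Gamma_\delta)$.

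However, there is a genuine gap in (ii), and it is not cosmetic, because (iii) is also conditional on it. You explicitly grant invertibility of $\tilde\Gamma$ and then offer two routes. Route (a), regularization via $\Gamma_\delta\to\Gamma$, is essentially the paper's proof, but you have not carried it out, and the paper's argument is rather delicate (it needs a uniform-in-$\delta$ exponential decay estimate on $e^{(B\Pi_s+\delta I)t}\Pi_s\Pi_s^\ast\eta$ as $t\to-\infty$, together with a Fredholm/orthogonality step). Route (b) is incorrect as stated. From the restricted equation one does get that $K=\ker\tilde\Gamma$ is $\tilde B^\ast$-invariant, but this does not force $\tilde Q$ to vanish anywhere: if you pair the equation with a vector $w\in K$ from both sides, both sides collapse to zero identically because $\tilde\Gamma w=0=w^\ast\tilde\Gamma$, and no information about $\tilde Q$ is extracted. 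Also, $K$ being $\tilde B^\ast$-invariant is not the same as containing a $\tilde B$-invariant subspace; generically only $K^\perp$ is $\tilde B$-invariant. The ``equivalently'' clause is the right instinct but needs to be inverted: you should not try to build a strictly larger solution than $\Gamma$ (that would contradict maximality), but rather build a \emph{comparison} solution $\Gamma_0$ that is positive definite on $V$, so that maximality gives $\Gamma\geq\Gamma_0$. Concretely: let $X_0$ be the unique positive definite solution of the Lyapunov equation $\tilde B^\ast X_0+X_0\tilde B=4\tilde Q$ on $V$, set $\Gamma_0:=X_0^{-1}$ on $V$ and extend by zero on $V^\perp$. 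One checks (using that $V$ is $B$-invariant and $V^\perp=\ker\Pi_s^\ast$, so $\Pi_s\Gamma_0\Pi_s^\ast=\Gamma_0$ and $P_VB^\ast|_{V^\perp}=0$) that $\Gamma_0$ solves the full Bernoulli equation (\ref{Bernoulli}). Maximality of $\Gamma$ then gives $\Gamma\geq\Gamma_0$, which is positive definite on $V$ and vanishes on $V^\perp$; since $\Pi_s^\ast$ also vanishes on $V^\perp$ and is bounded, this yields $\Gamma\geq\gamma\Pi_s\Pi_s^\ast$ with $\gamma=\lambda_{\min}(X_0^{-1})/\|\Pi_s^\ast\|^2>0$. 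This would close the gap while staying within your restriction-to-$V$ framework, and it also retroactively justifies the invertibility used in your proof of (iii).
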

\begin{proof}
It follows from  (\ref{Bernoulli}) that $X=\Pi_u\Gamma \Pi_u^\ast$
satisfies
\begin{equation}
4\Pi_u \Gamma Q \Gamma \Pi_u^\ast-(B\Pi_u) X - X(B\Pi_u)^\ast=0.
\label{BernForUnst}
\end{equation}
Consider the symmetric solution of (\ref{BernForUnst}) given by
\begin{equation}
\tilde X=\int_0^\infty Y(t)\,{\rm d} t, \label{ReprForUnst}
\end{equation}
where $Y(t)= -4 e^{B\Pi_u t}\Pi_u\Gamma Q \Gamma \Pi_u^\ast
e^{(B\Pi_u)^\ast t}$ (note that $\dot Y(t)=(B\Pi_u) Y(t) +
Y(t)(B\Pi_u)^\ast$ and $Y(t)\to 0$ as $t\to+\infty$, therefore
integrating we get $ (B\Pi_u) \tilde X + \tilde
X(B\Pi_u)^\ast=-Y(0)=4\Pi_u \Gamma Q \Gamma \Pi_u^\ast$, i.e.
$\tilde X$ does solve (\ref{BernForUnst})). We claim that
$X=\tilde X$. Otherwise $Z:=X-\tilde X$ is a nonzero solution of
equation $(B\Pi_u) Z + Z(B\Pi_u)^\ast=0$ and $Z=\Pi_u
Z\Pi_u^\ast$. Then $Z(t)=Z$ is a stationary solution of the
differential equation $\dot Z(t)=(B\Pi_u) Z(t) +
Z(t)(B\Pi_u)^\ast$. The latter equation has the solution $\tilde
Z(t)=e^{B\Pi_u t}\Pi_u Z \Pi_u^\ast e^{(B\Pi_u)^\ast t}$ which
vanishes as $t\to+\infty$ and satisfies the initial condition
$\tilde Z(0)=Z$. Thus $Z=0$, i.e. $X=\tilde X$. On the other hand
it follows from (\ref{ReprForUnst}) that $\tilde X\leq 0$ while
$X\geq 0$, this yields  $X=\tilde X =0$. Since $\Gamma$ is
positive semi-definite we also have $\Pi_u\Gamma= \Gamma
\Pi_u^\ast=0$ and the calculation
$\Gamma=(\Pi_u+\Pi_s)\Gamma(\Pi_u+\Pi_s)^\ast=\Pi_s \Gamma
\Pi_s^\ast$ shows (i). As a bi-product we also have established
that $\Gamma$ is the maximal positive semi-definite solution of
\begin{equation}
4\Gamma Q \Gamma -(B\Pi_s) \Gamma - \Gamma(B\Pi_s)^\ast=0
\label{BernoulliReduced}.
\end{equation}
Indeed, assuming that $\tilde \Gamma$ is another positive
semi-definite solution of (\ref{BernoulliReduced}) we get
$\Pi_u\tilde\Gamma Q \tilde\Gamma\Pi_u^{\ast}=0$. This yields
$\tilde\Gamma\Pi_u^{\ast}=0$ so that $\tilde\Gamma=\Pi_s
\tilde\Gamma \Pi_s^\ast$, therefore $B\tilde \Gamma=B(\Pi_s)^2
\tilde\Gamma \Pi_s^\ast=(B\Pi_s) \tilde\Gamma$ and $\tilde\Gamma$
thus solves (\ref{Bernoulli}).

To show (ii) and (iii) consider the maximal positive definite
solution $\tilde \Gamma_\delta$ of
\begin{equation}
4\tilde \Gamma_\delta Q \tilde \Gamma_\delta -(B\Pi_s+\delta I )
\tilde \Gamma_\delta - \tilde \Gamma_\delta(B\Pi_s+\delta
I)^\ast=0 \label{ApproxBernoul}
\end{equation}
for $\delta >0$. The existence of the unique positive definite
solution follows from the fact that $\tilde \Gamma_\delta^{-1}$ is
the unique solution of the Lyapunov matrix equation
\begin{equation}
4Q -\tilde \Gamma_\delta^{-1}(B\Pi_s+\delta I )  - (B\Pi_s+\delta
I)^\ast\tilde \Gamma_\delta^{-1}=0 \label{AuxLyapunov}
\end{equation}
given by
\begin{equation*}
\tilde \Gamma_\delta^{-1}=4\int_{-\infty}^0 e^{(B\Pi_s+\delta I
)^\ast t}Qe^{(B\Pi_s+\delta I)t}\,{\rm d} t.
\end{equation*}
It is known (see \cite{LR}) that $\tilde \Gamma_\delta$ converges
to the (maximal positive semi-definite) solution $\Gamma$ of
(\ref{BernoulliReduced}) as $\delta\to +0$. This allows to
establish (iii) easily,
$$
2{\rm tr} (Q\Gamma)=2 \lim_{\delta\to +0}2{\rm tr} (Q\tilde
\Gamma_\delta)=\frac{1}{2}\lim_{\delta\to +0}{\rm tr}\left(\tilde
\Gamma_\delta^{-1}(B\Pi_s+\delta I ) \tilde \Gamma_\delta +
(B\Pi_s+\delta I)^\ast\right) =  {\rm tr} (B\Pi_s).
$$
Finally, if we assume that (ii) is false, then there is $\eta\in \mathbb{R}^N$
such that $\Gamma \eta=0$ while $\Pi_s^\ast\eta\not =0$. Thanks to
(i) the equality $\Gamma \eta=0$ implies that $\Gamma \Pi^\ast_s\eta=0$.
On the other hand, $\Gamma
\left(\lim_{\delta\to +0}\tilde
\Gamma_\delta^{-1}\Pi_s\Pi_s^\ast\eta\right)=\Pi_s\Pi_s^\ast
\eta$, where the limit $\lim_{\delta\to +0}\tilde
\Gamma_\delta^{-1}\Pi_s\Pi_s^\ast\eta$ exists, for
$e^{(B\Pi_s+\delta I )^\ast t}Qe^{(B\Pi_s+\delta
I)t}\Pi_s\Pi_s^\ast \eta$ decays exponentially fast as
$t\to-\infty$, uniformly in $\delta\geq 0$. According the Fredholm
alternative $\Pi_s\Pi_s^\ast\eta$ and $\Pi_s^\ast\eta$ must be
orthogonal, yielding $|\Pi_s^\ast\eta|=0$. We obtained a
contradiction showing that (ii) does hold.
\end{proof}

\bigskip

\noindent
{\bf Acknowledgements.}
Part of this work was done when V. Rybalko was visiting the Narvik University College.
He is grateful for a warm hospitality and support of his visit.

\bigskip

\bigskip

\end{document}